\documentclass[12pt]{amsart}
\usepackage{amsfonts}
\usepackage{amsmath}
\usepackage{amsfonts}
\usepackage{amssymb}
\usepackage{amscd}

\newtheorem{theorem}{Theorem}[section]

\newtheorem{lemma}[theorem]{Lemma}
\newtheorem{proposition}[theorem]{Proposition}

\newtheorem{remark}{Remark}[section]

\newtheorem{definition}{Definition}[section]
\newtheorem*{definition*}{Definition}

\newtheorem*{Theorem A}{Theorem A}
\newtheorem*{Theorem B}{Theorem B}
\newtheorem*{Theorem C}{Theorem C}
\newtheorem*{Corollary A1}{Corollary A1}
\newtheorem*{Corollary A2}{Corollary A2}
\newtheorem*{Corollary C1}{Corollary C1}
\newtheorem*{lemma*}{Lemma}
\newtheorem*{proposition*}{Proposition}
\newtheorem*{theorem*}{Theorem}

\theoremstyle{definition} \theoremstyle{remark}
\numberwithin{equation}{section}

\newcommand{\rmi}{(\rm i)\,}
\newcommand{\rmii}{(\rm ii)\,}
\newcommand{\rmiii}{(\rm iii)\,}

\newcommand{\vp}{\varphi}
\newcommand{\diver}{{\mathop{\mathrm div}\,}}
\renewcommand{\div}[1]{{\mathop{\mathrm div}}\left(#1\right)}

\newcommand{\nablaphi}[1]{\vert \nabla #1\vert^{-1}
\varphi(\vert \nabla #1 \vert)\nabla #1}
\newcommand{\modnabla}[1]{\vert \nabla #1\vert }

\newcommand{\vol } {\mathrm{vol}\,}
\newcommand{\volD } {\mathrm{vol}_D\,}

\renewcommand{\d}{\delta}
\newcommand{\ty}{\infty}

\newcommand{\bdr}{\partial}
\newcommand{\R}{\mathbb{R}}

\newcommand{\ds}{\displaystyle}


\begin{document}

\author{Luciano Mari}
\address{Dipartimento di Matematica\\
Universit\`a di Milano\\
via Saldini 50\\
I-20133 Milano, ITALY} \email{luciano.mari@unimi.it}

\author{Marco Rigoli}
\address{Dipartimento di Matematica\\
Universit\`a di Milano\\
via Saldini 50\\
I-20133 Milano, ITALY} \email{rigoli@mat.unimi.it}

\author{Alberto G. Setti}
\address{Dipartimento di Fisica e Matematica
\\
Universit\`a dell'Insubria - Como\\
via Valleggio 11\\
I-22100 Como, ITALY}
 \email{alberto.setti@uninsubria.it}

\subjclass[2000]{58J05, 35J60}

\keywords{Keller--Osserman condition, diffusion type operators, weak maximum principles,
weighted Riemannian manifolds, quasi-linear elliptic inequalities}

\title[Diffusion type operators]
{
Keller--Osserman conditions for
diffusion-type operators on
Riemannian manifolds}

\begin{abstract}
In this paper we obtain generalized Keller-Osserman conditions for wide classes
of differential inequalities on weighted Riemannian manifolds of the form
$L u\geq b(x) f(u) \ell(|\nabla u|)$ and $L u\geq b(x) f(u) \ell(|\nabla u|)
- g(u) h(|\nabla u|)$, where $L$ is a non-linear diffusion-type operator.
Prototypical examples of these operators are the $p$-Laplacian and
the mean curvature operator. While we concentrate on non-existence results,
in many instances the conditions we describe are in fact necessary for non-existence.
The geometry of the underlying manifold does not affect the form of the Keller-Osserman
conditions, but is reflected, via bounds for the modified Bakry-Emery Ricci curvature,
by growth conditions for the functions $b$ and  $\ell$. We also describe a weak
maximum principle related to inequalities of the above form which
extends and improves previous results valid for the
$\vp$-Laplacian.
\end{abstract}
 \maketitle

\section{Introduction}
\label{intro} Consider the Poisson-type inequality on Euclidean
space $\R^m$
\begin{equation}
\label{PoissonRm}
\Delta u \geq f(u)
\end{equation}
where $f\in C^0\bigl([0,+\infty)\bigr)$, $f(0) = 0$ and $f(t)>0$ if
$t>0$. By an entire solution of (\ref{PoissonRm}) we mean a $C^1$
function $u$ satisfying (\ref{PoissonRm}) on $\R^m$ in the sense of
distributions. Let
\begin{equation}
\label{defF}
F(t) = \int_0^t f(s) \,ds.
\end{equation}
It is well know that if $f$ satisfies the Keller--Osserman condition
\begin{equation}
\label{KellerOssermanRm} \frac 1{\sqrt{F(t)}} \in L^1(+\infty),
\end{equation}
then (\ref{PoissonRm}) has no nonnegative entire solutions except
$u\equiv 0$. Note that in the case where $f(t)=t^q$ the
integrability condition expressed by (\ref{KellerOssermanRm}) is
equivalent to $q>1$. But (\ref{KellerOssermanRm}) is sharper than the
condition on powers it is implied by. For instance
(\ref{KellerOssermanRm}) holds if $f(t)=t\log^\beta (1+t)$ with
$\beta >2$.

As a matter of fact, if the Keller--Osserman condition fails, that is, if
\begin{equation}
\label{noKellerOssermanRm}
\frac 1{\sqrt{F(t)}} \not\in L^1(+\infty),
\end{equation}
 then
inequality (\ref{PoissonRm}) admits positive solutions. Indeed,
consider the ODE problem
\begin{equation}
\label{radialPoissonRm}
\begin{cases}
\alpha'' + \frac{m-1}r \alpha' = f(\alpha) &\\
\alpha(0)=\alpha_o>0,\,\, \alpha'(0)=0.
\end{cases}
\end{equation}
General theory yields the existence of a solution in a maximal
interval $[0,R)$ and a first integration of (\ref{radialPoissonRm})
gives $\alpha'>0$ on $(0,R)$. Suppose by contradiction that
$R<+\infty$. Using the maximality condition and the monotonicity of
$\alpha$ we obtain
\begin{equation}
\label{lim_alpha}
\lim_{r\to R^-}\alpha(r)=+\infty.
\end{equation}
On the other hand it follows from (\ref{radialPoissonRm}) that
\begin{equation*}
\alpha'\alpha'' \leq f(\alpha) \alpha',
\end{equation*}
whence integrating over $[0,r]$, $0<r\leq R$, changing variables in
the resulting integral, and taking square roots we obtain
\begin{equation*}
\frac{\alpha'}{\sqrt{F(\alpha)}}\leq \sqrt 2.
\end{equation*}
A further integration over $[0,r]$ with $0<a<r<R$ yields
\begin{equation*}
\int_{\alpha(a)}^{\alpha(r)} \frac{dt}{\sqrt{F(t)}}
\leq \sqrt 2 (r-a)
\end{equation*}
and letting $r\to R^-$ and using (\ref{lim_alpha}) we contradict
(\ref{noKellerOssermanRm}). This shows that the function $\alpha$
is defined on $[0,+\infty)$. Setting $u(x)= \alpha(r(x))$ ($r(x)=|x|$)
gives rise to a radial positive entire solution of (\ref{PoissonRm}).
Note however that any nonnegative solution of (\ref{PoissonRm})
must diverge at infinity sufficiently fast.
Indeed, it follows from \cite{PigolaRigoliSetti-MatCont}, Corollary 16,  that if
$u\geq 0$ is an entire solution of (\ref{PoissonRm}) satisfying
\begin{equation*}
u(x)=o\bigl(r(x)^\sigma\bigr) \text{ as } r(x) \to +\infty,
\end{equation*}
with $0\leq \sigma<2$, and $f$ is non-decreasing, then $u\equiv 0$.
Note that this latter conclusion can be hardly deduced from
(\ref{noKellerOssermanRm}).

We also observe that differential inequalities of the type (\ref{PoissonRm})
often appear in connection with geometrical problems on complete manifolds
and, in fact, R. Osserman introduced condition
(\ref{KellerOssermanRm}) in \cite{Osserman} in his investigation on the type of
a Riemann surface. For a number of further examples we refer,  for instance, to
\cite{PigolaRigoliSetti-Memoirs}.

Motivated by the above considerations, from now on we will denote
with $(M,\langle \, ,\rangle)$ a complete, non-compact, connected
Riemannian manifold of dimension $m\geq 2$. We fix an origin $o$ in
$M$ and we let $r(x)=\mathrm{dist} (x,o)$ be the Riemannian distance
from the chosen reference point, and we denote by
$B_r$ the geodesic ball of radius $r$ centered at $o$ and with
$\partial B_r$ its boundary.

Given a a positive function $D(x)\in C^2(M)$ and  a non-negative
function $\vp\in C^0(\R^+_0)\cap C^1(\R^+)$, where, as usual
$\R^+=(0,+\infty)$ and $\R^+_0= [0,+\infty)$,
we consider the diffusion-type operator defined on $M$ by the
formula
\begin{equation*}
L_{D,\vp} u= \frac 1{D} \mathop{div} \bigl(D |\nabla u|^{-1}
\vp (|\nabla u|) \nabla u \bigr).
\end{equation*}
For instance, if $D\equiv 1$ and $\vp (t) = t^{p-1}$, $p>1$, or
$\vp(t) = \frac t{\sqrt{1+t^2}}$ we recover the usual $p$-Laplacian and
 the mean curvature operator, respectively.

If $b(x)\in C^0(M)$ and $\ell\in C^0(\R^+_0)$, we will be interested in
solutions of the differential inequality
\begin{equation}
\label{main_ineq}
L_{D,\vp} u\geq b(x) f(u)\ell(|\nabla u|).
\end{equation}
By an entire classical weak solution of (\ref{main_ineq}) we mean a
$C^1$ function $u$ on $M$ which satisfies the inequality in the sense
of distributions, namely,
\begin{equation}
\label{weak_ineq}
-\int |\nabla u|^{-1} \vp(|\nabla u|)\langle \nabla u,\nabla \vp\rangle
D\,dV   \geq \int b(x) f(u) \ell(|\nabla u|) \psi D\,dV
\end{equation}
for every non-negative function $\psi \in C^\infty_c(M)$, where
we have denoted with $dV$ the Riemannian volume element.

Since we are dealing with a diffusion-type operator, the interplay
between analysis and geometry will be taken into account by means of
the modified Bakry--Emery Ricci tensor that we now introduce. Following
Z.~Qian (\cite{Qian-EstimatesWeightedVolume}), for $n>m$
let
\begin{equation}
\label{modified_Bakry-Emery_Ricci}
\begin{split}
\mathrm{Ricc}_{m,n} (L_D)& = Ricc_M - \frac 1 D \mathop{Hess} D +
\frac{n-m-1}{n-m} \frac 1 {D^2} dD\otimes dD\\
& = \mathrm{Ricc}(L_D) - \frac 1{n-m} \frac 1 {D^2} dD\otimes dD
\end{split}
\end{equation}
be the modified Bakry--Emery Ricci tensor, where
$\mathrm{Ricc}(L_D)$ is the usual Bakry--Emery Ricci tensor,
$\mathrm{Ricc}_M$ is the Ricci tensor of $(M,\langle\, ,\rangle)$,
(see D.~Bakry and P.~Emery, \cite{Bakry-Emery_DiffusionsHypercontractives}),
and where, to simplify notation, we have denoted with  $L_D$ the operator
$L_{D,\vp}$ for $\vp(t)=t$.

We introduce some more terminology.

\begin{definition}
\label{Cincreasing_def}
Let $g$ be a real valued function defined on $\R^+$. We say that $g$
is $C$-increasing on $\R^+$ if there exists  a constant $C\geq 1$ such
that
\begin{equation}
\label{Cincreasing}
\sup_{s\in(0,t]} g(s)\leq C g(t)\quad \forall t\in \R^+.
\end{equation}
\end{definition}
It is easily verified that the above condition is equivalent to
\begin{equation*}
\inf_{s\in [t, +\infty)} g(s)\geq \frac 1 C g(t)\quad \forall t\in
\R^+,
\end{equation*}
and both formulations will be used in the sequel. Clearly,
(\ref{Cincreasing}) is satisfied with $C=1$ if $g$ is non-decreasing
on $\R^+$. In general, the validity of (\ref{Cincreasing}) allows a
controlled oscillatory behavior such as, for instance, that of
$g(t) = t^2(2+\sin t)$.

In order to state our next result, we introduce the following set of
assumptions.
\begin{itemize}
\item[($\Phi_0$)] $\vp'>0$ on $\R^+$.
\item[($F_1$)] $f\in C(\R)$,
$f(0)=0$, $f(t)>0$ if $t>0$ and  $f$  is $C$-increasing on $\R^+$.
\item[($L_1$)] $\ell\in C^0(\R^+_0)$, $\ell(t)>0$ on $\R^+$.
\item[($L_2$)] $\ell$ is $C$-increasing on $\R^+$.
\item[($\vp\ell$)] $\liminf_{t\to 0^+} \frac{\vp(t)}{\ell(t)} = 0$,
\, $\frac{t\vp'(t)}{\ell(t)}\in L^1(0^+)\setminus L^1(+\infty)$.
\item[($\theta$)] there exists $\theta\in \R$ such that the
functions
$$
t\to\frac{\vp'(t)}{\ell(t)}t^\theta \quad \text{ and } \quad
t\to \frac{\vp(t)}{\ell(t)} t^{\theta -1}
$$
are $C$-increasing on $\R^+$.
\end{itemize}

Clearly the last two conditions relate the operator $L_{D,\vp}$ to
the gradient term $\ell$, and, in general,  they are not
independent. As we shall see below, in favorable circumstances
($\theta$) implies ($\vp\ell$). This is the case, for instance, in
the next Theorem~A when $\theta<1.$ For a better understanding of
these two assumptions, we examine the special but important case
where $\ell(t)= t^q,$ $q\geq 0.$ First we consider the case of the
$p$-Laplacian, so that $\vp(t)= t^{p-1}$, $p>1$. Then, given
$\theta\in \R,$ ($\vp\ell$) and ($\theta$) are simultaneously
satisfied provided
\begin{equation*}
p>q+1\quad\text{ and } \quad \theta\geq q-p+2.
\end{equation*}
If we consider $\vp(t)= t e^{t^2}$ (which, when $D\equiv 1$, gives
rise to the operator associated to the exponentially harmonic
functions, see \cite{DucEells-ExpHarmonicFcns} and
\cite{EellsLemaire-ExpHarmonicFcns}), then
($\vp\ell$) and ($\theta$) are both satisfied provided
\begin{equation*}
q<1 \quad \text{ and }\quad q \leq \theta.
\end{equation*}
If $\vp=\frac{t}{\sqrt{1+t^2}}$, which, for $D\equiv 1$,
corresponds to the ``mean curvature operator'', then ($\vp\ell$)
does not hold for any $q\geq 0.$  However, a variant of our arguments will allow
us to analyze this situation, see Section 4 below.

Because of ($L_1$) and ($\vp\ell$) we may define a
$C^1$-diffeomorphism $K:\R_0^+\to \R_0^+$ by the formula
\begin{equation}
\label{K_def}
K(t) = \int_0^t \frac{s\vp'(s)}{\ell(s)} ds.
\end{equation}
Since $K$ is increasing on $\R_0^+$ so is its inverse  $K^{-1}$.
Moreover, when  $\ell\equiv 1$ then
\begin{equation*}
K'(t) = \widetilde H'(t)
\end{equation*}
where
\begin{equation*}
\widetilde  H(t) = t\vp(t) - \int_0^t \vp(s) ds
\end{equation*}
is the pre-Legendre transform of $t\to \int_o^t \vp(s) ds.$

Having defined $F$ as in (\ref{defF}) we are ready to introduce our
first generalized Keller--Osserman condition.
\begin{equation}
\label{KO}\tag{KO}
\frac 1{K^{-1}(F(t))}\in L^1(+\infty).
\end{equation}
It is clear that, in the case of the Laplace--Beltrami operator (or
more generally, of the $p$-Laplacian) and for $\ell\equiv 1,$ (KO)
is equivalent to the classical Keller--Osserman condition
(\ref{KellerOssermanRm}).
After this preparation we are ready to state
\begin{Theorem A}
\label{Theorem A}
Let $(M,\langle\,,\rangle)$ be a complete manifold satisfying
\begin{equation}
\label{genRicci_lower_bound}
\mathrm{Ricc}_{n,m} (L_D) \geq H^2(1+r^2)^{\beta/2},
\end{equation}
for some $n>m$, $H>0$ and $\beta\geq -2$. Let also $b(x)\in C^0(M)$
be a non-negative function such that
\begin{equation}
\label{b_lower_bound}
b(x)\geq \frac C { r(x)^{\mu}} \quad \text{if } \, r(x)\gg 1,
\end{equation}
for some $C>0$ and $\mu\geq 0.$  Assume that ($\Phi_0$), ($F_1$),
($L_1$), ($L_2$), ($\vp\ell$), ($\theta$) and (\ref{KO}) hold, and
suppose that
\begin{equation}
\label{thetabetamu}\tag{$\theta\beta\mu$}
\begin{cases}
\theta < 1-\beta/2 - \mu \,\, \text{ or } \,\,\theta = 1-\beta/2 -
\mu<1 &\text{if }\,  \mu>0\\
\theta< 1-\beta/2 &\text{ if } \, \mu=0.
\end{cases}
\end{equation}
Then any entire classical
weak solution $u$ of the differential inequality (\ref{main_ineq})
is either non-positive or constant. Furthermore, if $u\geq 0$ and $\ell(0) >0$,
then $u\equiv 0$.
\end{Theorem A}
We remark that letting $\beta<-2$ in (\ref{genRicci_lower_bound})
yields the same estimates valid for $\beta=-2,$ which roughly correspond to the
Euclidean behavior. Correspondingly, the conclusion of Theorem A is not improved
by  such a strengthening of the assumption on the modified Bakry--Emery Ricci curvature.

To better appreciate the result and the role played by geometry, we
state the following consequence for the $p$-Laplace operator
$\Delta_p$.

\begin{Corollary A1}
\label{Corollary A1}
Let $(M,\langle\,,\rangle)$ and $b(x)$ be as in the statement of
Theorem A and satisfying (\ref{genRicci_lower_bound}) with $D\equiv 1$ (so that
$\mathrm{Ricc}_{n,m}= \mathrm{Ricc}$) and (\ref{b_lower_bound}).
Let $f$ satisfy ($F_1$) and let $\ell(t)= t^q$,
for some $q\geq 0$. Assume that $p$ and  $\mu$ satisfy
\begin{equation*}
p>q+1,\quad 0\leq \mu\leq p-q,\quad \beta \leq 2(p-q-\mu-1).
\end{equation*}
If
\begin{equation*}
\tag{KO}
\frac 1{F(t)^{1/(p-q)}}\in L^1(+\infty),
\end{equation*}
then any  entire classical weak solution $u$ of the
differential inequality
\begin{equation*}
\Delta_p u \geq b(x) f(u) |\nabla u|^q
\end{equation*}
is either non-positive or
constant.
\end{Corollary A1}
Note that if $p=2$ and $q=\mu=0$, then the maximum amount of
negative curvature allowed is obtained by choosing $\beta=2$.
In particular, the result covers the cases of Euclidean and hyperbolic
space. We observe in passing that the choice $\beta=2$ is borderline
for the stochastic completeness of the underlying manifold.

To include in our analysis the case of the mean curvature operator
we state the following consequence of Theorem~4.1.

\begin{Corollary A2}
\label{Corollary A2}
Let $(M,\langle\,,\rangle)$ and $b(x)$ be as in the statement of
Theorem A and satisfying (\ref{genRicci_lower_bound}) with $D\equiv 1$
and (\ref{b_lower_bound}). Let $f$ satisfy ($F_1$) and let $\ell(t)= t^q$,
for some $q\geq 0$. Assume   $\mu\geq 0$ and that
\begin{equation*}
0\leq q< -\frac \beta 2 -\mu.
\end{equation*}
If
\begin{equation*}
\tag{$\rm{\widehat{KO}}$}
\frac 1{F(t)^{1/(1-q)}}\in L^1(+\infty),
\end{equation*}
then any non-negative, entire classical weak solution $u$ of the
differential inequality
\begin{equation*}
\div{\frac{\nabla u }{\sqrt{1+\modnabla u^2}}} \geq b(x) f(u) |\nabla u|^q
\end{equation*}
is constant.
\end{Corollary A2}
Note that, contrary to Corollary A1,  the case of hyperbolic space, which corresponds
to $\beta=0,$ is not covered by Corollary A2. On the other hand, if
$\beta=-2,$ which, as already mentioned, roughly corresponds to a
Euclidean behavior, the conditions on the parameters become
\begin{equation*}
\mu\geq 0, \quad 0\leq q<1-\mu,
\end{equation*}
and they are clearly compatible. This is one of the instances where
the interaction between geometry and differential operators comes into
play.

As briefly remarked at the beginning of this introduction, the
failure of the Keller--Osserman condition may yield existence of
non-constant non-negative entire solutions. The next result shows
that such solutions, if they exist, have to go to infinity
sufficiently fast depending on the geometry of $M$ and, of course,
of the relevant parameters in the differential inequality satisfied.
To state our result we introduce the following set of assumptions.

\begin{itemize}
\item[($\Phi_1$)] \rmi $\vp(0) = 0$; \rmii $\vp(t)\leq At^\delta$ on $\R^+$, for some
$A$, $\delta>0$.
\item[($F_0$)] $f\in C^0(\R^+_0)$.
\item[($L_3$)] $\ell\in C^0(\R^+_0)$, $\ell(t)\geq C t^\chi$ on $\R^+$, for some
$C>0$, $\chi\geq 0$.
\item[($b_1$)] $b\in C^0(M)$, $b(x)>0$ on $M$, $b(x)\geq \frac
C{r(x)^\mu}$ if $r(x)\gg 1$, for some $C>0,$ $\mu\in \R.$
\end{itemize}
\begin{Theorem B}
\label{Theorem B}
Let $(M, \langle\, ,\rangle)$ be a  complete Riemannian manifold,
and assume that conditions ($\Phi_1$), ($F_0$), ($L_3$) and ($b_1$) hold.
Given $\sigma\geq 0$, let $\eta= \mu - (1+\delta - \chi)(1-\sigma)$
and suppose that
\begin{equation*}
\sigma \geq \eta,\quad 0\leq \chi< \delta.
\end{equation*}
Let $u$ be a non-constant entire classical weak solution of
\begin{equation}
\tag{\ref{main_ineq}}
L_{D,\vp} u \geq b(x) f(u) \ell(\modnabla u),
\end{equation}
and suppose that either
\begin{equation}
\label{u_growth}
\begin{split}
&\sigma >0,\qquad\liminf_{t\to +\infty} f(t)>0 \,\,\text{ and}\\
&u_+(x)= \max\{ u(x), 0\} = o\bigl(r(x)^\sigma\bigr)
\quad \text{ as }\, r(x)\to +\infty,
\end{split}
\end{equation}
or
\begin{equation}
\sigma=0 \quad \text{ and }\quad  u^*= \sup_M u<+\infty.
\end{equation}
Assume further that either
\begin{equation}
\label{vol_growth_exp}
\liminf_{r\to +\infty} \frac{\log \int_{ B_r} D(x) dV(x)}{r^{\sigma -\eta}} <+\infty
\quad \text{ if } \sigma -\eta >0
\end{equation}
or
\begin{equation}
\label{vol_growth_poly}
\liminf_{r\to +\infty} \frac{\log \int_{B_r} D(x) dV(x)}{\log r} <+\infty
\quad \text{ if } \sigma -\eta =0.
\end{equation}
Then $u^*<+\infty$ and $f(u^*)\leq 0$. In particular, if we also assume that
$f(t)>0$ for $t>0$, and that  $u(x_o)>0$ for some $x\in M$, then  $u$ is
constant on $M$, and if  in addition  $f(0)=0$ and $\ell(0)>0$, then $u\equiv 0$ on $M$.
\end{Theorem B}

Observe that the growth condition (\ref{u_growth}) is sharp.
Indeed, we consider  the case of the $p$-Laplace operator on Euclidean
space, for which $D\equiv 1$ and $\delta=p-1$, and suppose that $\chi=\mu=0$ and
$\sigma=\eta$. Since $\eta=p(\sigma-1)$, the latter condition amounts to $\sigma
=p',$ the H\"older conjugate exponent of $p$. Since condition
(\ref{vol_growth_poly}), which now reads
\begin{equation*}
\liminf_{r\to +\infty} \frac{\log \vol B_r}{\log r}
<+\infty,
\end{equation*}
is clearly satisfied, 
all assumptions of Theorem~B hold. On the other hand, a simple
computation shows that the function $u(x)= \frac 1 {p'} r(x)^{p'}$
is a classical entire weak solution of $\Delta_p u=m$, for which
(\ref{u_growth}) barely fails to be met.

We also stress that while in Theorem~A the main geometric assumption
is the radial lower bound on the modified Bakry--Emery Ricci
curvature expressed by (\ref{genRicci_lower_bound}), in Theorem ~B
we consider either (\ref{vol_growth_exp}) or
(\ref{vol_growth_poly}), which we interpret as follows. Let
$dV_D= D dV$ be the measure with density $D(x)$, so that, for every
measurable set $\Omega$,
\begin{equation*}
\vol_D(\Omega) = \int_\Omega D(x)\,dV,
\end{equation*}
and consider the weighted Riemannian manifold $(M, \langle ,\,\rangle,
dV_D)$.
With this notation, we may rewrite, for instance
(\ref{vol_growth_exp}), in the form
\begin{equation}
\label{vol_growth_exp_bis}
\liminf_{r\to +\infty} \frac{\log \vol_D B_r}{r^{\sigma -\eta}}
<\infty, \quad\text{ if }\, \sigma >\eta,
\end{equation}
and interpret it as a control from above on the growth of the
weighted volume of geodesic balls with respect to Riemannian
distance function. This is a mild requirement, which is implied,
via a version of the Bishop--Gromov volume comparison theorem for
weighted manifolds, by a lower bound on the modified Bakry--Emery
Ricci curvature in the radial direction.
Indeed, as we shall see in Section~2 below,  the latter yields an
 upper estimate on $L_D  r$ which in turn gives the volume comparison estimate.
In fact, we shall prove there that an $L^p$-condition on the modified
Bakry--Emery Ricci curvature implies
a control from above on the weighted volume of geodesic balls.

On the contrary, as in the classical case of Riemannian geometry,
volume growth restrictions do not provide in general a control on $L_D
r$. This in turn prevents the possibility of constructing radial
super-solutions of (the equation corresponding to)
(\ref{main_ineq}), that could be used, as in the proof of
Theorem~A, as suitable barriers to study the existence problem via
comparison techniques. This technical difficulty forces us to devise
a new approach in the proof of Theorem~B, based on a generalization of the weak
maximum principle introduced by the authors in
\cite{RigoliSalvatoriVignati-Revista}, \cite{PigolaRigoliSetti-Memoirs}
(see Section~5).

In Section 6
we implement our techniques to analyze differential inequalities of the type
\begin{equation}
\label{ineq_minus}
L_{D, \vp} u \geq b(x) f(u) \ell(\modnabla u) - g(u) h(\modnabla u),
\end{equation}
where $g$ and $h$ are continuous functions. Our first task is to find
an appropriate form of the Keller--Osserman condition. To this end,
we let
\begin{equation}
\label{rho_def}
\tag{$\rho$}
\rho\in C^0(\R^+_0), \quad \rho(t)\geq 0 \,\text{ on }\, \R^+_0,
\end{equation}
and define the function $\hat{F}(t)= \hat F_{\rho, \omega}$
depending on the real parameter $\omega$ by the formula
\begin{equation}
\label{F_rho_omega}
\hat F_{\rho,\omega} (t) = \int_0^t f(s)
e^{(2-\omega) \int_0^s \rho(z) dz} ds.
\end{equation}
Note that $\hat F$ is well defined because of our assumptions. We
assume that $t\vp'/\ell\in L^1(0^+)\setminus L^1(+\infty)$, define
$K$ as in (\ref{K_def})
and let $K^{-1}:\R^+_0\to \R^+_0$ be its inverse. The new version of the
Keller--Osserman condition that we shall consider is
\begin{equation}
\label{rhoKO}
\tag{$\rho$KO}
{\ds\frac{e^{\int_0^t \rho(z) dz}}{K^{-1}\bigl(\hat F(t)\bigr)}}\in
L^ 1(+\infty).
\end{equation}

Of course, when $\rho\equiv 0$ we recover condition (\ref{KO})
introduced above. As we shall see in Section~5, the two
conditions are in fact equivalent if $\rho\in L^1$ under some mild
additional conditions.

We prove

\begin{Theorem C}
\label{Theorem C}
Let $(M,\langle\,,\rangle)$ be a complete manifold satisfying
\begin{equation}
\tag{\ref{genRicci_lower_bound}}
\mathrm{Ricc}_{n,m} (L_D) \geq H^2(1+r^2)^{\beta/2},
\end{equation}
for some $n>m$, $H>0$ and $\beta\geq -2$.
Assume that  ($F_1$), ($L_1$), ($L_2$), ($\vp\ell$), ($\theta$), ($b_1$) and
($\ell$)
hold with $\mu\geq 0$, $\theta \leq 1$ and
\begin{equation}
\label{thetabetamu'}\tag{$\theta\beta\mu$'}
\begin{cases}
\theta < 1-\beta/2 - \mu, &\text{ if } \, \theta\leq 1, \, \mu>0 \\
\theta = 1-\beta/2 -
\mu, &\text{ if }\,  \theta< 1, \, \mu>0\\
\theta< 1-\beta/2, &\text{ if } \, \theta \leq 1, \, \mu=0.
\end{cases}
\end{equation}

Suppose also that
\begin{itemize}
\item[($h$)] $h\in C^0(\R^+_0), \quad 0\leq h(t)\leq Ct^2 \vp'(t)$ on
$\R^+_0$, for some $C>0$,
\item[($g$)] $g\in  C^0(\R^+_0), \quad g(t)\leq C\rho(t)$
on $\R^+_0$, for some $C>0$,
\end{itemize}
and $\rho$ satisfying (\ref{rho_def}). If (\ref{rhoKO}) holds with $\omega=\theta$  in
the definition of $\hat F$, then any  entire classical
weak solution $u$ of the differential inequality (\ref{ineq_minus})
either non-positive or constant. Moreover, if $u\geq 0$ and $\ell(0)>0$ then $u\equiv 0.$
\end{Theorem C}
As already observed, ($\vp\ell$) is not satisfied by the mean
curvature operator; however, a version of Theorem~C
can be given to handle this case, see Section~6 below.

As mentioned earlier, is some circumstances (\ref{rhoKO}) is
equivalent to (\ref{KO}). This is the case, for instance, in the
next

\begin{Corollary C1}
\label{Corollary C1} Let $(M,\langle\,,\rangle)$ be as in Theorem~C.
Assume that ($g$), ($F_1$), ($L_2$), ($\vp\ell$), ($L_1$),
($\theta$)  and (\ref{thetabetamu'}) hold. Suppose also that
\begin{equation*}
g_+(t)=\max \{0,g(t)\} \in L^1(+\infty).
\end{equation*}
If (\ref{KO}) holds, then any  entire classical weak
solution $u$ of
\begin{equation*}
\Delta_p u\geq b(x) f(u) \ell(\modnabla u ) - g(u) \modnabla u^p
\end{equation*}
is either non-positive or constant. Moreover, if $u\geq 0$ and $\ell(0)>0$
then $u\equiv 0.$.
\end{Corollary C1}

We conclude this introduction by observing that in the literature
have recently appeared other methods to obtain Liouville-type
results for differential inequalities such as (\ref{main_ineq}) or
(\ref{ineq_minus}). Among them we mention the important technique
developed by E.~Mitidieri and S.I.~Pohozaev, see, e.g.,
\cite{MitidieriPohozaev-Steklov}, which proves to be very effective
when the ambient space is $\R^m$. Their method, which involves the
use of cut-off functions in a non-local way,  may be adapted to a
curved ambient space, but is not suitable to deal with situations
where the volume of balls grows superpolynomially.

The paper is organized as follows:
\begin{itemize}
\item[1] Introduction.
\item[2] Geometric comparison
results.
\item[3] Proof of Theorem~A and
related results.
\item[4] A second version of Theorem~A.
\item[5] The weak maximum principle and
non existence of solutions with controlled growth.
\item[6] Proof of Theorem~C.
\end{itemize}

In the sequel $C$ will always denote a positive
constant which may vary from line to line.

\section{Comparison results}
\label{section_comparison}
In this section we consider the diffusion operator
\begin{equation}
\label{diffusion}
L_Du = \frac{1}{D} \mathrm{div} (D\nabla u) \qquad D \in
C^2(M)\quad , \quad D>0.
\end{equation}
and denote   by $r(x)$ the distance from a fixed origin $o$ in an
$m$-dimensional complete Riemannian manifold $(M,\langle\,
,\rangle)$. The Riemannian metric and the weight $D$ give rise to a
metric measure space, with measure $D\,dV$, $dV$ denoting the usual
Riemannian volume element. For ease of  notation in the sequel we
will drop the index $D$ and write $L_D=L$.

The purpose of this section is to collect the estimates for
$L r$ and for the weighted volume of Riemannian balls, that will be
used in the sequel. The estimates are derived assuming an upper bound for a family of
modified Ricci tensors, which account for the mutual interactions of
the geometry and the weight function.

Although most of the material is available in the literature (see,
e.g. D. Bakry and P. Emery \cite{Bakry-Emery_DiffusionsHypercontractives},
Bakry \cite{Bakry-hypercontractivite}, A.G. Setti \cite{Setti-Canadian},
Z. Qian \cite{Qian-EstimatesWeightedVolume}, Bakry and
Qian \cite{BakryQian-VolumeComparisonTheorems}, J. Lott \cite{Lott-Commentarii},
X.-D. Li \cite{LiX-D-JMathPuresAppl}), we are going to present a quick
derivation of the estimates for completeness and the convenience of
the reader.

We note that our method is somewhat different from that of most of
the above authors. In addition we will be able to derive weighted volume estimates
under integral type conditions on the modified Bakry--Emery Ricci curvature,
which extend to this setting results of S. Gallot \cite{Gallot-Asterisque},  P. Petersen
and G. Wei \cite{PetersenWei-GAFA}, and S. Pigola, M. Rigoli and Setti
\cite{PigolaRigoliSetti-TheBook}.

For $n>m$ we let $\mathrm{Ricc}(L)$ and $\mathrm{Ricc}_{n,m}(L)$ denote
the Bakry-Emery and the modified Bakry-Emery Ricci tensors defined in
(\ref{modified_Bakry-Emery_Ricci}).
%

The starting point of our considerations is the following version
of the Bochner--Weitzenb\"ock formula for the diffusion operator
$L$.

\begin{lemma}
\label{Bochner-Weitzenbock_lemma}
Let $u\in C^3(M)$, then
\begin{equation}
\label{Bochner-Weitzenbock_formula}
\frac12 L\bigl(\modnabla u^2\bigr) = |\mathrm{Hess} u|^2 + \langle \nabla L u,
\nabla u\rangle + \mathrm{Ricc}(L)(\nabla u, \nabla u).
\end{equation}
\end{lemma}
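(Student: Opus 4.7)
The plan is to view $L=L_D$ as a drift perturbation of the Laplace--Beltrami operator and to reduce the identity to the classical Bochner--Weitzenb\"ock formula. Since
\[
L u = \frac{1}{D}\mathrm{div}(D\nabla u) = \Delta u + \langle \nabla \log D,\nabla u\rangle,
\]
the standard Bochner identity, valid for $u\in C^3(M)$, reads
\[
\frac{1}{2}\Delta\bigl(|\nabla u|^2\bigr) = |\mathrm{Hess}\,u|^2 + \langle \nabla \Delta u,\nabla u\rangle + \mathrm{Ricc}_M(\nabla u,\nabla u).
\]

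To account for the weight, I would use the pointwise identity $\tfrac{1}{2}\nabla|\nabla u|^2 = \bigl(\mathrm{Hess}(u)(\nabla u,\cdot)\bigr)^\sharp$, which converts the extra drift contribution into
\[
\frac{1}{2}L\bigl(|\nabla u|^2\bigr) - \frac{1}{2}\Delta\bigl(|\nabla u|^2\bigr) = \mathrm{Hess}(u)(\nabla u,\nabla \log D).
\]
On the other hand, differentiating $L u$ in the direction $\nabla u$ and invoking compatibility of the Levi-Civita connection with the metric gives
\[
\langle \nabla Lu,\nabla u\rangle = \langle \nabla \Delta u,\nabla u\rangle + \mathrm{Hess}(\log D)(\nabla u,\nabla u) + \mathrm{Hess}(u)(\nabla u,\nabla \log D).
\]
Combining the three displays, the cross term $\mathrm{Hess}(u)(\nabla u,\nabla\log D)$ cancels and one obtains
\[
\frac{1}{2}L\bigl(|\nabla u|^2\bigr) = |\mathrm{Hess}\,u|^2 + \langle \nabla Lu,\nabla u\rangle + \bigl[\mathrm{Ricc}_M - \mathrm{Hess}(\log D)\bigr](\nabla u,\nabla u).
\]

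It remains to recognize the bracketed tensor as $\mathrm{Ricc}(L)$. Reading off (\ref{modified_Bakry-Emery_Ricci}) one has $\mathrm{Ricc}(L) = \mathrm{Ricc}_M - D^{-1}\mathrm{Hess}\,D + D^{-2}\,dD\otimes dD$, and the elementary computation $\mathrm{Hess}(\log D) = D^{-1}\mathrm{Hess}\,D - D^{-2}\,dD\otimes dD$ delivers exactly the required identification. No serious obstacle is expected here: the statement is essentially a piece of tensorial bookkeeping, and the only step that deserves some care is this final identification, since the paper's $\mathrm{Ricc}(L)$ is defined only implicitly through (\ref{modified_Bakry-Emery_Ricci}).
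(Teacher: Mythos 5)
Your proof is correct and follows essentially the same route as the paper: reduce to the classical Bochner--Weitzenb\"ock identity and compute the drift corrections. The only (cosmetic) difference is that you package the drift as $\nabla\log D$, which lets the single cross term $\mathrm{Hess}(u)(\nabla u,\nabla\log D)$ cancel cleanly and absorbs both $-D^{-1}\mathrm{Hess}\,D$ and $D^{-2}\,dD\otimes dD$ into one term $-\mathrm{Hess}(\log D)$, whereas the paper works with $D$ directly and tracks the terms $2D^{-1}\mathrm{Hess}\,u(\nabla u,\nabla D)$ and $D^{-2}\langle\nabla D,\nabla u\rangle^2$ separately; both arrive at the identification $\mathrm{Ricc}(L)=\mathrm{Ricc}_M-\mathrm{Hess}(\log D)$, which you correctly read off from the paper's implicit definition in (\ref{modified_Bakry-Emery_Ricci}).
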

\begin{proof}
It follows from the definition of $L$ and the usual
Bochner--Weitzenb\"ock formula that
\begin{equation*}
\begin{split}
L(\modnabla u^2) &= \Delta(\modnabla u^2) + D^{-1} \langle \nabla D,
\nabla \modnabla u^2\rangle \\
&= 2 |\mathrm{Hess} u|^2 +2\langle\nabla \Delta u, \nabla u \rangle + 2
\mathrm{Ricc}(\nabla u ,\nabla u ) + D^{-1} \langle \nabla D,
\nabla \modnabla u^2\rangle.
\end{split}
\end{equation*}
Now computations show that
\begin{equation*}
D^{-1} \langle \nabla D,
\nabla \modnabla u^2\rangle = 2D^{-1} \mathrm{Hess} u( \nabla u,
\nabla D)
\end{equation*}
and
\begin{equation*}
\begin{split}
\langle\nabla \Delta u, \nabla u \rangle &= \langle\nabla\bigl(L
u - D^{-1} \langle \nabla D, \nabla u\rangle\bigr), \nabla u\rangle \\
&= \langle\nabla (L u), \nabla u\rangle  + D^{-2} \langle \nabla u,
\nabla D\rangle^2 \\
&\hskip 0.5cm - D^{-1} \mathrm{Hess} u(\nabla u, \nabla D) -
D^{-1} \mathrm{Hess } D (\nabla u, \nabla u),
\end{split}
\end{equation*}
so that substituting yields the required conclusion.
\end{proof}

\begin{lemma}
\label{Lr_formula_lemma}
Let $(M, \langle\, ,  \rangle)$ be  a complete Riemannian manifold of
dimension $m$. Let $r(x)$ be the Riemannian distance function
from a fixed reference point $o$, and denote with $\mathrm{cut} (o)$ the
cut locus of $o$. Then for every $n>m$ and  $x\not\in \{o\}\cup \mathrm{cut} (o)$
\begin{equation}
\label{Lr_diff_ineq}
\frac 1{n-1} (Lr)^2 + \langle \nabla Lr, \nabla r\rangle +
\mathrm{Ricc} _{n,m}(\nabla r, \nabla r)\leq 0.
\end{equation}
\end{lemma}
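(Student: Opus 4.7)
The plan is to apply the Bochner--Weitzenb\"ock formula of \lemref{Bochner-Weitzenbock_lemma} to $u=r$ on $M\setminus(\{o\}\cup\mathrm{cut}(o))$, where $r$ is smooth. Since $\modnabla r^2\equiv 1$ on this set, the left-hand side $\tfrac12 L(\modnabla r^2)$ vanishes, yielding
\begin{equation*}
0 = |\mathrm{Hess}\, r|^2 + \langle \nabla Lr, \nabla r\rangle + \mathrm{Ricc}(L)(\nabla r,\nabla r).
\end{equation*}
Using the definition \eqref{modified_Bakry-Emery_Ricci} of $\mathrm{Ricc}_{n,m}(L)$, the desired inequality \eqref{Lr_diff_ineq} is equivalent to
\begin{equation*}
\frac{1}{n-1}(Lr)^2 \leq |\mathrm{Hess}\, r|^2 + \frac{1}{n-m}\,\frac{\langle \nabla D,\nabla r\rangle^2}{D^2}.
\end{equation*}

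I would then estimate the two terms on the right separately. Since $\modnabla r^2\equiv 1$ off the cut locus, differentiating gives $\mathrm{Hess}\,r(\nabla r,\cdot)=0$, so in an orthonormal frame diagonalizing $\mathrm{Hess}\,r$ one eigenvalue is zero and $\Delta r$ is the sum of the remaining $m-1$ eigenvalues. The Cauchy--Schwarz inequality then gives the standard bound
\begin{equation*}
|\mathrm{Hess}\, r|^2 \geq \frac{(\Delta r)^2}{m-1}.
\end{equation*}
Writing $Lr=\Delta r+D^{-1}\langle\nabla D,\nabla r\rangle$ and setting $A=\Delta r$, $B=D^{-1}\langle\nabla D,\nabla r\rangle$, it remains to verify
\begin{equation*}
\frac{(A+B)^2}{n-1}\leq \frac{A^2}{m-1}+\frac{B^2}{n-m}.
\end{equation*}

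The final step is the key algebraic trick that motivates the definition of the modified Bakry--Emery tensor: a weighted Cauchy--Schwarz inequality with weights $p=m-1$ and $q=n-m$, chosen precisely so that $p+q=n-1$, gives
\begin{equation*}
(A+B)^2 = \left(\sqrt{p}\,\frac{A}{\sqrt{p}}+\sqrt{q}\,\frac{B}{\sqrt{q}}\right)^{\!2} \leq (p+q)\left(\frac{A^2}{p}+\frac{B^2}{q}\right),
\end{equation*}
which upon division by $n-1$ yields exactly what is needed. This weight choice is the main (and really the only) creative step; the rest is assembling identities. Combining these estimates proves \eqref{Lr_diff_ineq} pointwise on $M\setminus(\{o\}\cup\mathrm{cut}(o))$, which is the set where the statement is asserted.
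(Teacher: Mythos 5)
Your proof is correct and is essentially the same as the paper's. The paper also applies the Bochner--Weitzenb\"ock formula to $u=r$, uses $\mathrm{Hess}\,r(\nabla r,\cdot)=0$ and Cauchy--Schwarz to get $|\mathrm{Hess}\,r|^2\geq (\Delta r)^2/(m-1)$, and then absorbs the cross term via the elementary inequality $(a-b)^2\geq \frac{1}{1+\epsilon}a^2-\frac{1}{\epsilon}b^2$ with $\epsilon$ chosen so that $(1+\epsilon)(m-1)=n-1$ (i.e.\ $\epsilon=(n-m)/(m-1)$); that choice is exactly your weighted Cauchy--Schwarz with weights $p=m-1$, $q=n-m$, so the two arguments are the same calculation in slightly different notation.
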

\begin{proof}
We use $u=r(x)$ in the generalized Bochner--Weitzenb\"ock formula
(\ref{Bochner-Weitzenbock_formula}). Since
$\mathrm{Hess} r (\nabla r , X ) = 0$ for every vector field $X$, by
taking an orthonormal frame in the orthogonal complement of $\nabla
r$, and using the Cauchy--Schwarz inequality we see that
\begin{equation*}
|\mathrm{Hess}\, r|^2 \geq \frac 1{m-1} (\Delta r) ^2
\end{equation*}
Using the elementary inequality
\begin{equation*}
(a-b)^2\geq \frac1{1+\epsilon} a^2 - \frac 1 \epsilon b^2, \quad
a,b\in \R ,\,\epsilon  >0,
\end{equation*}
we estimate
\begin{equation*}
(\Delta u)^2 = (L u -D^{-1}\langle \nabla D, \nabla u\rangle)^2
\geq \frac 1{1+\epsilon} (Lu)^2 -\frac 1 \epsilon D^{-2}\langle
\nabla D, \nabla u\rangle^2.
\end{equation*}
Now, the required conclusion follows   substituting into (\ref{Bochner-Weitzenbock_formula}),
using $\modnabla r =1$,  choosing $\epsilon$ in such a way that $(1+\epsilon)(m-1) =n-1$,
and recalling the definition of $\mathrm{Ricc}_{n,m}$.
\end{proof}

We are now ready to prove the weighted Laplacian comparison theorem.
Versions of this results have been obtained by Setti, \cite{Setti-Canadian}, for
the case where $n=m+1$ and later by Qian  (\cite{Qian-EstimatesWeightedVolume})
in the general case where $n>m$ (see also \cite{BakryQian-VolumeComparisonTheorems}
which deals with the case where the drift term is not even assumed to be a gradient).
We present a proof modeled on the proof of the Laplacian Comparison Theorem
described in \cite{PigolaRigoliSetti-Memoirs}.

\begin{proposition}
\label{weighted_laplacian_comparison}
Let $(M, \langle\, ,  \rangle)$ be  a complete Riemannian manifold of
dimension $m$. Let $r(x)$ be the Riemannian distance function
from a fixed reference point $o$, and denote with $\mathrm{cut} (o)$ the
cut locus of $o$.
Assume that
\begin{equation}
\label{modifiedRicci_lower_bound}
\mathrm{Ricc}_{n,m}(\nabla r,\nabla r) \geq -(n-1) G(r)
\end{equation}
for some $G\in C^0([0,+\infty)$,  let $h\in C^2([0,+\infty)$
be  a solution of the problem
\begin{equation}
\label{h_ineq}
\begin{cases}
h'' - G h \geq 0\\
h(0)=0,\,\, h'(0)=1,
\end{cases}
\end{equation}
and let  $(0,R)$, $R\leq +\infty$, be  the maximal interval where $h(r)>0$. Then for
every $x\in M$ we have $r(x)\leq R$, and the inequality
\begin{equation}
\label{estimate_Lr}
L r(x) \leq (n-1)\frac {h'(r(x))}{h(r(x))}
\end{equation}
holds pointwise in $M\setminus (\mathrm{cut}(o)\cup \{o\})$ and
weakly on $M$.
\end{proposition}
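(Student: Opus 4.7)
The plan is to derive a Riccati-type differential inequality for $Lr$ along radial unit-speed geodesics via Lemma \ref{Lr_formula_lemma}, compare it with the Riccati supersolution provided by $(n-1)h'/h$, and then extend the resulting pointwise estimate to a distributional estimate across $\mathrm{cut}(o)$ by Calabi's trick.

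First I would fix $x\in M\setminus (\mathrm{cut}(o)\cup\{o\})$, let $\gamma:[0,r(x)]\to M$ be the unit-speed minimizing geodesic from $o$ to $x$, and set $\psi(t)=Lr(\gamma(t))$. Since $\gamma'(t)=\nabla r(\gamma(t))$, we have $\psi'(t)=\langle\nabla Lr,\nabla r\rangle(\gamma(t))$, so inequality (\ref{Lr_diff_ineq}) together with the hypothesis (\ref{modifiedRicci_lower_bound}) yields the Riccati inequality
\begin{equation*}
\psi'(t)+\frac{1}{n-1}\psi(t)^2 \leq (n-1)G(t).
\end{equation*}
A direct computation shows that $\bar\psi(t):=(n-1)h'(t)/h(t)$ satisfies
\begin{equation*}
\bar\psi'(t)+\frac{1}{n-1}\bar\psi(t)^2 = (n-1)\frac{h''(t)}{h(t)} \geq (n-1)G(t)
\end{equation*}
on $(0,R)$, by (\ref{h_ineq}); hence $\bar\psi$ is a Riccati supersolution.

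Next I would compare $\psi$ with $\bar\psi$. Setting $w = \bar\psi-\psi$, subtracting the two inequalities gives
\begin{equation*}
w' + \frac{\bar\psi+\psi}{n-1}\, w \geq 0.
\end{equation*}
Near $t=0^+$ one has $\Delta r(\gamma(t))\sim (m-1)/t$ while the drift $D^{-1}\langle\nabla D,\nabla r\rangle$ is bounded near $o$, so $\psi(t)\sim (m-1)/t$; on the other hand $h(t)=t+o(t)$ yields $\bar\psi(t)\sim (n-1)/t$, and therefore $w(t)\sim (n-m)/t \to +\infty$ as $t\to 0^+$. A Gronwall argument applied on $[\epsilon,t]$ and letting $\epsilon\to 0^+$ then propagates positivity, giving $\psi\leq \bar\psi$ on the entire interval of definition. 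This proves the pointwise inequality (\ref{estimate_Lr}) on $M\setminus(\mathrm{cut}(o)\cup\{o\})$. The estimate $r(x)\leq R$ is a byproduct: if some $x\in M$ had $r(x)>R$, then $\bar\psi(t)\to+\infty$ as $t\to R^-$ would force $\psi(t)\to+\infty$, contradicting the smoothness of $r$ along the interior of a minimizing geodesic (equivalently, forcing a focal point before $x$).

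The final step is to upgrade the pointwise bound on $M\setminus\mathrm{cut}(o)$ to the weak inequality on all of $M$. For this I would use Calabi's trick: given $x_0\in M$ and a minimizing geodesic $\gamma$ from $o$ to $x_0$, set $o_\epsilon=\gamma(\epsilon)$ and $r_\epsilon(x)=\epsilon+d(x,o_\epsilon)$. The triangle inequality gives $r_\epsilon\geq r$ with equality at $x_0$, and for $\epsilon$ small enough $r_\epsilon$ is smooth in a neighborhood of $x_0$ (since $x_0\notin\mathrm{cut}(o_\epsilon)$). Applying the already established pointwise estimate with base point $o_\epsilon$ furnishes a smooth upper barrier for $r$ at $x_0$, and passing to the limit $\epsilon\to 0^+$ yields the inequality (\ref{estimate_Lr}) in the barrier sense, which, combined with the fact that the cut locus has measure zero and $r$ is globally $1$-Lipschitz, translates into the weak inequality against any nonnegative test function $\phi\in C^\infty_c(M)$. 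The hard part is this last step, specifically controlling the $\epsilon$-dependent quantities uniformly near the cut locus and ensuring the integration by parts goes through despite the non-smoothness of $r$; this is where the argument follows the blueprint laid out in \cite{PigolaRigoliSetti-Memoirs}.
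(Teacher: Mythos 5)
Your proposal follows essentially the same architecture as the paper's proof: invoke Lemma~\ref{Lr_formula_lemma} to get the Riccati inequality along radial geodesics, compare against the barrier built from $h$, then Calabi's trick for the weak inequality. The difference is in the form of the comparison. The paper introduces the auxiliary function
\begin{equation*}
g(s) = s^{\frac{m-1}{n-1}} \exp\left(\int_0^s \Bigl[\tfrac{\psi(t)}{n-1} - \tfrac{m-1}{n-1}\tfrac 1 t\Bigr]\, dt\right),
\end{equation*}
which turns the Riccati inequality for $\psi$ into the Sturm inequality $g''\leq Gg$ with prescribed asymptotics at $0$, and then runs a Wronskian-type comparison with $h$ via $z=h'g-hg'$, $z'\geq 0$, $z(0^+)=0$. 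You instead stay in the Riccati picture and compare $\psi$ directly with $\bar\psi=(n-1)h'/h$ by the integrating-factor/Gronwall trick. These are the same calculation under the substitution $\psi=(n-1)(\log g)'$, $\bar\psi=(n-1)(\log h)'$; what the paper's $g$ is doing is precisely normalizing out the $O(1/t)$ singularity at the origin, which you handle instead by tracking $\mu(\epsilon)w(\epsilon)\to 0$ (or simply $w(\epsilon)>0$) as $\epsilon\to 0^+$. Either way is fine; the Sturm form is arguably cleaner because it removes the singular comparison function from the picture, while yours is more direct.

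One concrete slip: in the argument that $r(x)\leq R$, you assert that $\bar\psi(t)\to+\infty$ as $t\to R^-$. This has the wrong sign. Since $h>0$ on $(0,R)$ and $h(R)=0$, $h$ must be decreasing to zero, so $h'/h$ cannot stay bounded below; in fact $\liminf_{t\to R^-}\bar\psi=-\infty$, and it need not even be a genuine limit. The contradiction still works — from $\psi\leq\bar\psi$ you get $\liminf_{t\to R^-}\psi=-\infty$, which is impossible if $r(x)>R$ because $\psi=Lr\circ\gamma$ is continuous at $t=R$ along the interior of the minimizing geodesic — but the mechanism is that the barrier crashes to $-\infty$, not to $+\infty$, and the conclusion is that $\psi$ is forced below any bound, not above. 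A second, purely expository point: the Riccati comparison is only available on $(0,\min\{R,r(x)\})$, so $r(x)\leq R$ has to be established before one can state the pointwise inequality on all of $(0,r(x))$; logically it is not a ``byproduct'' of the pointwise bound but a prerequisite that the same comparison delivers. The paper handles this by letting $s\to R^-$ in $h(s)\geq \frac{h(\epsilon)}{g(\epsilon)}g(s)$ and noting $g(R)>0$ whenever $R<r(x)$.
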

\begin{proof}

Next let $x\in M\setminus (\mathrm{cut}(o)\cup \{o\})$,  let
$\gamma:[0, r(x)]\to M$ be the unique minimizing geodesic parametrized by arc length joining
$o$ to $x$, and set $\psi (s)= (L r)\circ \gamma (s)$. It follows from
(\ref{Lr_diff_ineq}) and $\dot\gamma = \nabla r$ that
\begin{equation}
\label{psi_ineq}
\begin{split}
\frac d{ds} (L r \circ\gamma)(s)
&= \langle \nabla L r ,\nabla r\rangle \circ \gamma \\
&\leq - \frac 1 {n-1} (L r \circ\gamma)(s)^2 + (n-1) G(s)
\end{split}
\end{equation}
on $(0, r(x))$. Moreover,
\begin{equation}
\label{psi_asympt}
(L r \circ\gamma)(s) = \frac{m-1} s + O(1) \quad \text{as }\, s\to 0+,
\end{equation}
which follows from the fact that
\begin{equation*}
(L r \circ\gamma)(s) = \bigl(\Delta r + D^{-1}\langle \nabla D, \nabla r\rangle\bigr)
\circ\gamma(s)
\end{equation*}
and the second summand is bounded as $s\to 0+$, while, by standard
estimates,
\begin{equation*}
\Delta r(x) = \frac{m-1} {r(x)} + o(1).
\end{equation*}
Because of (\ref{psi_asympt}), we may set
\begin{equation}
\label{g_def}
g(s) = s^{\frac{m-1}{n-1}} \exp\left(
\int_0^s \bigl[\frac{(L r \circ\gamma)(t)}{n-1} - \frac{m-1}{n-1} \frac 1 t
\bigr] dt\right),
\end{equation}
so that $g$ is defined in $[0, r(x)],$ $g(s)>0$ in $(0, r(x))$,
and it satisfies
\begin{equation}
\label{g_prop}
(n-1)\frac {g'}{g} = L r \circ\gamma,\quad  g(0) = 0,\quad
g(s) = s^{\frac{m-1}{n-1}} \bigl(1 + o(1)\bigr) \,\, \text{as }
s\to 0+.
\end{equation}
It follows from this and (\ref{psi_ineq}) that $g$ satisfies the problem
\begin{equation}
\label{g_ineq}
\begin{cases}
g'' \leq G g &\\
g(0) = 0, \quad g'(s)= s^{\frac{m-n}{n-1}} \bigl(1 + o(1)\bigr)
\, \text{ as } s\to 0^+.
\end{cases}
\end{equation}
Recalling that, by assumption $h$ satisfies (\ref{h_ineq}), we now proceed as
in the standard Sturm comparison theorems, and consider the function
\begin{equation*}
z(s) = h'(s)g(s)- h(s) g'(s).
\end{equation*}
Then
\begin{equation*}
z'(s) = gh  \bigl(\frac {h''} h - \frac{g''}g\bigr) \geq 0
\end{equation*}
in the interval $(0, \tau)$, $\tau=\min\{r(x), R\}$, where $g$ is defined and $h$
is positive. Also, it follows from the asymptotic
behavior of $g$ and $h$ that
\begin{equation*}
h'(s) g(s) \asymp s^{\frac{m-1}{n-1}},\quad h(s)g'(s)\asymp
\frac{m-1}{n-1} s^{\frac{m-1}{n-1}}
\end{equation*}
so that
\begin{equation*}
z(s)\to 0^+ \,\text{ as } s\to 0.
\end{equation*}
We conclude that $z(s)\geq 0$ and therefore
\begin{equation*}
\frac {g'(s)}{g(s)}\leq \frac {h'(s)}{h(s)}
\end{equation*}
in the interval $(0,\tau)$.

Integrating between $\epsilon$ and $s$, $0<\epsilon<s <\tau$, yields
\begin{equation*}
g(s)\leq \frac{g(\epsilon)}{h(\epsilon)} h(s),
\end{equation*}
showing that $h$ must be positive in $(0,\tau)$, and therefore $r(x)\leq R.$
Since this holds for every $x\in M$ we deduce that if $R<+\infty$ then $M$ is compact and
$diam(M)\leq 2R$. Moreover, in $(0,r(x))$ we have
\begin{equation*}
(L_ r) (\gamma(r(x))) = (n-1) \frac{g'}g (r(x)) \leq (n-1) \frac{h'}h
(r(x)).
\end{equation*}
This shows that the inequality (\ref{estimate_Lr}) holds pointwise in
$M\setminus (\mathrm{cut}(o)\cup \{o\})$. The weak inequality now
follows from standard arguments (see, e.g., \cite{PigolaRigoliSetti-Memoirs},
Lemma~2.2, \cite{PigolaRigoliSetti-TheBook}, Lemma 2.5).
\end{proof}

As in the standard Riemannian case, the estimate for $Lr$ allows to
obtain weighted volume comparison
estimates (see, \cite{Setti-Canadian}, \cite{Qian-EstimatesWeightedVolume},
\cite{BakryQian-VolumeComparisonTheorems}, \cite{LiX-D-JMathPuresAppl}).
\begin{theorem}
\label{theorem_volume_comparison}
Let $\left(  M,\left\langle\, ,\right\rangle \right)  $ be as in the
previous Proposition, and assume that the modified Bakry-Emery Ricci
tensor $\mathrm{Ricc}_{n,m}$ satisfies
(\ref{modifiedRicci_lower_bound})
for some $G\in C^0([0,+\infty)$.  Let $h\in C^2([0,+\infty)$
be  a solution of the problem (\ref{h_ineq}), and
let $(0,R)$ be the maximal interval where $h$ is positive.
Then, the functions
\begin{equation}
\label{volume_comparison1}%
r\mapsto \frac{\mathrm{vol_D}\partial B_{r}(o)}{h(r)^{n-1}}
\end{equation}
and
\begin{equation}
\label{volume_comparison2}
r \mapsto \frac{\mathrm{vol}_D B_{r}(  o)}{\int_0^r  h(t)^{n-1}dt},
\end{equation}
are non-increasing a.e, respectively non-increasing, in $(0,R)$.
In particular, for every $0< r_o <R$,  there exists a constant $C$ depending on
$D$ and on the geometry of $M$ in $B_{r_o}(o)$ such that
\begin{equation}
\label{volume_comparison3}
\mathrm{vol}_D(B_{r}(o))\leq  C
\begin{cases}
r^m & \text{ if }\, 0\leq r\leq r_o\\
\int_0^{r} h(t)^{n-1} dt &\text{ if }\, r_o\leq r.
\end{cases}
\end{equation}
\end{theorem}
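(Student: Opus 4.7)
The plan is to prove the monotonicity statements by a standard Bishop--Gromov type comparison argument, adapted to the weighted measure $D\,dV$, and then deduce the upper bound (\ref{volume_comparison3}) as a formal consequence. The key input is the pointwise estimate on $Lr$ furnished by Proposition~\ref{weighted_laplacian_comparison}, together with the fact that $\mathrm{cut}(o)$ has measure zero and that $M\setminus(\mathrm{cut}(o)\cup\{o\})$ is the star-shaped image of a domain $\mathcal{D}_o\subset S^{m-1}$ under the normal exponential map.

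First I would write out the weighted volume element in polar geodesic coordinates as $D\,dV=D(\exp_o(s\xi))\,\mathcal{A}(s,\xi)\,ds\,d\xi$, where $\mathcal{A}(s,\xi)$ is the Riemannian area Jacobian along the geodesic $s\mapsto\exp_o(s\xi)$, $\xi\in S^{m-1}$, defined up to the cut distance $c(\xi)$. A direct computation, using that $\dot\gamma=\nabla r$ and $\tfrac{\partial}{\partial s}\log\mathcal{A}(s,\xi)=\Delta r(\gamma(s))$, yields
\begin{equation*}
\frac{\partial}{\partial s}\log\bigl(D(\gamma(s))\,\mathcal{A}(s,\xi)\bigr)
= \Delta r\circ\gamma(s) + \frac{1}{D}\langle \nabla D,\nabla r\rangle\circ\gamma(s)
= Lr\circ\gamma(s).
\end{equation*}
By Proposition~\ref{weighted_laplacian_comparison}, the right-hand side is bounded above by $(n-1)h'(s)/h(s)=\tfrac{d}{ds}\log h(s)^{n-1}$ on $(0,\min\{c(\xi),R\})$, so the function $s\mapsto D(\gamma(s))\mathcal{A}(s,\xi)/h(s)^{n-1}$ is non-increasing on that interval for each fixed $\xi$. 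Extending $\mathcal{A}(\cdot,\xi)$ by zero past the cut value preserves the monotonicity (since the ratio can only jump down), and this is the essential technical step.

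Next I would integrate over $S^{m-1}$. Setting
\begin{equation*}
\mathrm{vol}_D\,\partial B_r(o)=\int_{\{\xi:c(\xi)>r\}} D(\exp_o(r\xi))\,\mathcal{A}(r,\xi)\,d\xi,
\end{equation*}
the pointwise monotonicity integrates to the monotonicity of the quotient in (\ref{volume_comparison1}), which is an a.e.\ statement because $r\mapsto \mathrm{vol}_D\,\partial B_r$ is only defined a.e. For (\ref{volume_comparison2}) I would invoke the elementary Gromov-type lemma: if $\alpha,\beta>0$ on $(0,R)$ and $\alpha/\beta$ is non-increasing, then
\begin{equation*}
r\mapsto \frac{\int_0^r \alpha(t)\,dt}{\int_0^r \beta(t)\,dt}
\end{equation*}
is also non-increasing; applied to $\alpha(r)=\mathrm{vol}_D\,\partial B_r$ and $\beta(r)=h(r)^{n-1}$, together with the coarea identity $\mathrm{vol}_D B_r=\int_0^r \mathrm{vol}_D\,\partial B_t\,dt$, this gives the claim.

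Finally, the upper bound (\ref{volume_comparison3}) follows by choosing $r_o$ and applying the monotonicity: for $r\geq r_o$,
\begin{equation*}
\mathrm{vol}_D B_r(o)\leq \frac{\mathrm{vol}_D B_{r_o}(o)}{\int_0^{r_o} h(t)^{n-1}\,dt}\int_0^r h(t)^{n-1}\,dt,
\end{equation*}
while for $0\leq r\leq r_o$ the bound $\mathrm{vol}_D B_r(o)\leq C r^m$ is immediate from the smoothness of $D$ and the classical $r^m$ estimate near the origin, with a constant depending only on $D$ and the geometry on $B_{r_o}(o)$. The main obstacle I anticipate is the careful bookkeeping at the cut locus when extending the pointwise monotonicity of $D\mathcal{A}/h^{n-1}$ to the integrated statement; this is handled by working on $\mathcal{D}_o$ and using that cut points contribute nothing to $\mathrm{vol}_D\,\partial B_r$, together with the weak form of (\ref{estimate_Lr}) to justify the global differential inequalities underlying the computation.
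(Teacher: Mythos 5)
Your argument is correct, and it recovers exactly the same conclusions, but it follows a genuinely different route from the paper's proof. You work in polar geodesic coordinates: you identify $\partial_s\log\bigl(D(\gamma(s))\mathcal A(s,\xi)\bigr)=Lr\circ\gamma(s)$ for each direction $\xi$, apply the \emph{pointwise} bound $Lr\leq (n-1)h'/h$ from Proposition~\ref{weighted_laplacian_comparison} along each ray up to the cut distance $c(\xi)$, extend $\mathcal A$ by zero past $c(\xi)$ (noting that this can only decrease the ratio), and then integrate over $S^{m-1}$. This is the classical Bishop--Gromov strategy transplanted to the weighted measure, and it requires careful bookkeeping at the cut locus, which you correctly identify as the delicate point. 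The paper instead works directly with the \emph{weak} form of (\ref{estimate_Lr}), i.e.\ the distributional inequality $-\int\langle\nabla r,\nabla\vp\rangle D\,dV\leq (n-1)\int\vp\,(h'/h)\,D\,dV$, and inserts the explicit radial Lipschitz test function $\vp_\varepsilon(x)=\rho_\varepsilon(r(x))\,h(r(x))^{-n+1}$; after simplification, the co-area formula and the limit $\varepsilon\to 0$ yield the monotonicity of (\ref{volume_comparison1}) directly, with the cut locus handled once and for all inside the validity of the weak inequality. Both approaches are standard and both are correct; yours is more elementary and geometrically transparent, while the paper's is more economical in that the cut-locus issue is absorbed into the already-proved global weak inequality and never reappears. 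From the a.e.\ monotonicity of (\ref{volume_comparison1}), both proofs deduce (\ref{volume_comparison2}) via the same Gromov quotient lemma and the co-area identity, and the bound (\ref{volume_comparison3}) follows identically in both.
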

\begin{proof}
By Lemma \ref{weighted_laplacian_comparison}, inequality  (\ref{estimate_Lr})
holds weakly on $M$, so for every $0\leq\varphi\in
Lip_{c}\left(  M\right)  $, we have
\begin{equation}
-\int\left\langle \nabla r,\nabla\varphi\right\rangle D(x) dV
\leq
\left(  n-1\right)
\int \varphi\frac{h^{\prime}\left(  r\left(  x\right)  \right)  }{h\left(  r\left(
x\right)  \right)  } D(x) dV. \label{volume_comparison4}%
\end{equation}
For any $\varepsilon>0,$ consider the radial cut-off function%
\begin{equation}
\label{volcomp4a'}
\varphi_{\varepsilon}\left(  x\right)  =
\rho_{\varepsilon}\left(  r\left(x\right)  \right)
h(r(x))  ^{-n+1}%
\end{equation}
where $\rho_{\varepsilon}$ is the piecewise linear function%
\begin{equation}
\label{volcomp4a''}
\rho_{\varepsilon}\left(  t\right)  =\left\{
\begin{array}
[c]{ll}
0 &\text{if } t\in [0,r)\\
\frac{t-r}{\varepsilon} &\text{if } t\in [r, r+\varepsilon)\\
1 & \text{if }t\in [r+\varepsilon ,R-\varepsilon)\\
\frac{R-t}{\varepsilon} & \text{if }t\in [R-\varepsilon,R)\\
0 & \text{if }t\in [R,\infty).
\end{array}
\right.
\end{equation}
Note that
\[
\nabla\varphi_{\varepsilon}=\left\{
-\frac{\chi_{R-\varepsilon, R}}{\varepsilon}
+\frac{\chi_{r, r+\varepsilon}}{\varepsilon}
-\left(  n-1\right)
\frac{h^{\prime}(r(x))}{ h(r(x))}\rho_{\varepsilon}
\right\}
h\left( r(x)\right)  ^{-n+ 1}
\nabla r,
\]
for a.e. $x\in M$, where $\chi_{s,t}$ is the characteristic function
of the annulus $B_{t}\left(  o\right)  \setminus B_{s}\left(  o\right)  .$
Therefore, using $\varphi_{\varepsilon}$ into (\ref{volume_comparison4}) and
simplifying, we get%
\begin{equation*}
\frac{1}{\varepsilon} \int_{B_{R}\left(  o\right)\setminus
B_{R-\varepsilon}\left(o\right)  } h\left(  r(x)\right)  ^{-n+1}
\leq \frac 1\varepsilon \int_{B_{r+\varepsilon}\left(
o\right)\setminus B_{r}\left(o\right)  } h\left(  r(x)\right)
^{-n+1}.
\end{equation*}
Using the co-area formula we deduce that
\[
\frac{1}{\varepsilon}\int_{R-\varepsilon}^{R}
\mathrm{vol}\partial B_{t}\left(o\right)
h\left(  t\right)  ^{-n+1}
\leq
\frac{1}{\varepsilon}\int_{r}^{r+\varepsilon}
\mathrm{vol}\partial B_{t}\left(o\right)
h\left(  t\right)  ^{-n+1}
\]
and, letting $\varepsilon\searrow0$,
\begin{equation}
\frac{\mathrm{vol}_D\partial B_{R}\left(  o\right)} {h\left(R\right)  ^{m-1}}
\leq
\frac{\mathrm{vol}_D\partial B_{r}\left(  o\right)} {h\left(r\right)  ^{m-1}}
\label{volcomp4c}%
\end{equation}
for a.e. $0<r<R$.
The second statement follows from the first and the co-area formula, since,
as noted by M.~Gromov (see, \cite{CheegerGromovTaylor-JDG}),
for general real valued functions $f\left(  t\right)  \geq0$, $g\left(  t\right)  >0$,
\[
\text{if } \, \,
t\to \frac{f\left(  t\right)  }{g\left(  t\right)  }\,\,
\text{ is decreasing, then  }\,\,
t\to \frac{\int_{0}^{t}f}{\int_{0}^{t}g}\, \, \text{is decreasing}.
\]
\end{proof}
\goodbreak

We  next consider the situation where the modified Bakry-Emery
Ricci curvature satisfies some $L^p$-integrability conditions and
extends results obtained in \cite{PigolaRigoliSetti-TheBook} for the
Riemannian volume which in turn slightly generalize previous results by
P. Petersen and G. Wei, \cite{PetersenWei-GAFA}
(see also \cite{Gallot-Asterisque} and \cite{LiYau-Compositio}).

Since we will be interested in the case the underlying manifold is non-compact,
we assume that $G$ is a non-negative, continuous  function on
$[0,+\infty)$ and that
$h\left(  t\right) \in C^{2}\left([0,+\infty)\right)  $ is the
solution of the problem%
\begin{equation*}
\left\{
\begin{array}
[c]{l}%
h^{\prime\prime}\left(  t\right)  -G\left(  t\right)  h\left(  t\right)
=0\\
h\left(  0\right)  =0 \,\, h^{\prime}\left(  0\right)  =1.
\end{array}
\right.
\end{equation*}
The assumption that $G\geq 0$ implies that $h'\geq 1$ on $[0,+\infty)$
and therefore $h>0$ on $(0,+\infty).$
For ease of notation, in the course of the arguments that follow we
set
\begin{equation}
\label{Petersen eq3}
A_{G,n}  (r)= h(r)^{n-1}\quad \text{and} \quad V_{G,n}(r) = \int_0^r h(t)^{n-1} dt
\end{equation}
so that $A_{G,n}(r)$ and $V_{G,n}(r)$ are multiples of the measures of the sphere and of the ball
of radius $r$ centered at the pole in the $n$-dimensional model manifold $M_G$ with radial
Ricci curvature equal to $-(n-1)G$.


Using an exhaustion of  $E_o=M\setminus \mathrm{cut}(o)$ by means
of starlike domains one shows (see, e.g., \cite{PigolaRigoliSetti-TheBook},  p. 35)
that for every non-negative test
function $\vp\in Lip_c(M)$,
\begin{equation}
\label{Lr_weak}
-\int_M \langle \nabla r, \nabla \vp\rangle D dV \leq \int_{E_o} \vp
L r D dV.
\end{equation}
We outline the argument for the convenience of the reader. Let
$\Omega_n$ be such an exhaustion of $E_o$, so that, if $\nu_n$
denotes the outward unit normal to $\partial \Omega_n$, then
$\langle \nu_n, \nabla r\rangle\geq 0.$ Integrating by parts shows
that
\begin{equation*}
\begin{array}{l}
\displaystyle -\int_M \langle \nabla r, \nabla \vp\rangle D dV
= -\lim_n \int_{\Omega_n} \langle \nabla r, \nabla \vp\rangle D
dV \\[0.4cm]
\displaystyle = \lim_n\Big\{ \int_{\Omega_n}\vp [\Delta r +\frac 1D\langle \nabla D, \nabla r\rangle ] DdV
 -\int_{\partial \Omega_n} \!\!\!\vp \langle \nabla r, \nu_n\rangle D d\sigma \Big\} \\[0.4cm]
\displaystyle \leq \lim_n \int_{\Omega_n}\vp L_D r DdV
=\int_{E_o}\vp  L_D r DdV,
\end{array}
\end{equation*}
where the inequality follows from $\langle\nabla r, \nu_n\rangle
\geq 0$, and the limit on the last line exists because, by
Proposition \ref{weighted_laplacian_comparison}, $L r$ is bounded
above by some positive integrable function $g$ on the relatively compact set
$E_o\cap \text{supp} \vp$ (namely, if
$\mathrm{Ricc}_{m,n} \ge -(n-1)H^2$ on $E_o\cap \text{supp} \vp$
for some $H>0$, we can choose
$g= H\coth(Hr)$).\\
Applying the
above inequality to the test function
$$
\vp_\epsilon(x)=\rho_\epsilon(r(x)) h(r(x))^{-n+1},
$$
already considered in (\ref{volcomp4a'}), arguing as in the proof of
Theorem~\ref{theorem_volume_comparison},
and using the fact that $A_{G,n}(r) = h(r)^{n-1}$ is non-decreasing,
we deduce that for a.e. $0<r<R$
\begin{equation}
\label{Petersen eq0}
\begin{split}
\frac{\volD \partial B_R}{A_{G,n}(R)} - \frac{\volD \partial B_r}{A_{G,n}(r)}
\leq \frac1{A_{G,n}(r)}  \int_{B_R\setminus B_r} \psi D dV,
\end{split}
\end{equation}
where we have set
\begin{equation}
\label{Petersen eq1}
\psi(x)
=
\begin{cases}
\max\{ 0, L r(x) - (n-1) \frac{h'(r(x))}{h(r(x))} \} &\text{if } \,
x\in E_o\\
0&\text{if } \, x\in \mathrm{cut}(o).
\end{cases}
\end{equation}
Note by virtue of the asymptotic behavior of $Lr$ and $h'/h$ as $r(x)\to
0$, $\psi$ vanishes in a neighborhood of $o$. Moreover, if
$\mathrm{Ric}_{n,m} (\nabla r, \nabla r)\geq  -(n-1) G(r(x))$, then,
by the weighted Laplacian comparison theorem, $\psi(x)\equiv 0,$ and
we recover the fact that the function
\begin{equation}
r\to\frac{\volD \partial B_r}{A_{G,n}(r)}
\end{equation}
is non-increasing for a.e. $r$.
%

Using the co-area formula, inserting (\ref{Petersen eq0}), and
applying H\"older inequality with exponents $2p$ and $2p/(2p-1)$
to the right hand side of the resulting inequality
we conclude that
\begin{equation}
\label{Petersen eq4}
\begin{split}
\frac{d}{dR} \left(\frac{\vol_D B_R(o)}{V_{G,n} (R)}\right)
&= \frac{V_{G,n}(R)\volD \partial B_R  - A_{G,n}(R)\volD B_R } {V_{G,n}(R)^2}
\\ & =V_G(R)^{-2}\int_0^R
\bigl( A_{G,n}(r)\vol \partial B_R  - A_{G,n}(R)\volD \partial B_r \bigl)\,
dr
\\&
\leq\frac{R A_{G,n}(R) } {V_{G,n}(R)^{1+1/2p}}
\Bigl(\frac{\volD B_R}{V_{G,n}(R)}\Bigr)^{1-1/2p} \Bigl(\int_{B_R} \psi
^{2p} D dV\Bigr)^{1/2p}
\end{split}
\end{equation}
Now we define
\begin{equation}
\label{Petersen eq2}
\begin{split}
\rho(x)
&=  -\min
\{
0,
 \mathrm{Ric}_{n,m}(\nabla r, \nabla r) + (n-1)G (r(x))\}\\
 &=  \bigl[\mathrm{Ric}_{n,m}(\nabla r, \nabla r) + (n-1)G
(r(x))\bigr]_-.
\end{split}
\end{equation}
We will need to estimate the integral on the right hand side of
(\ref{Petersen eq4}) in terms of $\rho$. This is achieved in the
following lemma, which is a minor modification of
\cite{PetersenWei-GAFA}, Lemma 2.2,
and \cite{PigolaRigoliSetti-TheBook}, Lemma~2.19.
%
%
%

\begin{lemma}
\label{Petersen lemma2} For every $p>n/2$ there exists a constant
$C=C(n,p)$ such that for every $R$
\begin{equation*}
\int_{B_R} \psi^{2p} D dV  \leq C \int_{B_R} \rho ^p D dV.
\end{equation*}
with $\rho(x)$ defined in (\ref{Petersen eq2}).
\end{lemma}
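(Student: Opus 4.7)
The plan is to work along unit-speed minimizing geodesics $\gamma$ issuing from $o$ inside $E_o$, and to extract a Riccati-type ODE for the excess $\phi(s) := L r(\gamma(s)) - \lambda(s)$ where $\lambda(s) := (n-1) h'(s)/h(s)$. Combining the differential inequality of Lemma~\ref{Lr_formula_lemma} with the identity $\lambda' + \lambda^2/(n-1) = (n-1)G$ that follows from $h''=Gh$, and noting that by definition $-\mathrm{Ricc}_{n,m}(\nabla r,\nabla r) - (n-1)G(r) \leq \rho$, subtraction yields
\[
\phi' + \frac{\phi^2 + 2\lambda\phi}{n-1} \leq \rho\circ\gamma
\]
at every point of $\gamma$ before the cut locus. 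The asymptotics $L r \sim (m-1)/s$ and $\lambda \sim (n-1)/s$ as $s\to 0^+$ force $\phi<0$ near the origin, so $\psi=\phi_+$ identically vanishes there.

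The next step is to bring in the weighted polar density $\mathcal{A}(s,\xi) := D(\exp_o s\xi) J(s,\xi)$, which along $\gamma$ satisfies the key identity $\partial_s\log \mathcal{A} = L r = \phi+\lambda$. On the open set $\{\psi>0\}$ we have $\psi=\phi$; multiplying the Riccati inequality by $(2p-1)\psi^{2p-2}\mathcal{A}$ and using
\[
(2p-1)\psi^{2p-2}\phi'\,\mathcal{A} = \partial_s\bigl(\psi^{2p-1}\mathcal{A}\bigr) - (\phi+\lambda)\psi^{2p-1}\mathcal{A}
\]
produces
\[
\partial_s\bigl(\psi^{2p-1}\mathcal{A}\bigr) + \Bigl(\tfrac{2p-1}{n-1} - 1\Bigr)\psi^{2p}\mathcal{A} + \Bigl(\tfrac{2(2p-1)}{n-1} - 1\Bigr)\lambda\psi^{2p-1}\mathcal{A} \leq (2p-1)\rho\,\psi^{2p-2}\mathcal{A}.
\]
Both coefficients on the left are strictly positive exactly when $p>n/2$: this precise algebraic cancellation between the Riccati term $\phi^2/(n-1)$ and the integration-by-parts remainder coming from $\partial_s\log\mathcal{A}$ is the heart of the matter and the one genuinely non-obvious step.

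To finish, I would apply Young's inequality with exponents $p/(p-1)$ and $p$, giving $\rho\,\psi^{2p-2} \leq \tfrac{(p-1)\epsilon}{p}\psi^{2p} + \tfrac{1}{p\epsilon^{p-1}}\rho^p$, and choose $\epsilon$ small enough to absorb a fixed fraction of the $\bigl(\tfrac{2p-1}{n-1}-1\bigr)\psi^{2p}\mathcal{A}$ term on the left. Dropping the non-negative $\lambda$-term, the inequality reduces to $\partial_s(\psi^{2p-1}\mathcal{A}) + c_1\psi^{2p}\mathcal{A} \leq c_2\rho^p\mathcal{A}$ for explicit $c_1,c_2$ depending only on $n$ and $p$. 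Integrating from $s=0$ to $s=\min\{R,c(\xi)\}$, with $c(\xi)$ the cut distance in direction $\xi$, the boundary term at $0$ vanishes by the local nullity of $\psi$ near the origin, and the one at the upper limit is non-negative and can be discarded. A final integration in $\xi\in S^{m-1}$, together with the polar-coordinate formula for the weighted measure $D\,dV$ and the fact that $\psi\equiv 0$ on the measure-zero cut locus, delivers the desired bound $\int_{B_R}\psi^{2p}D\,dV \leq C(n,p)\int_{B_R}\rho^p D\,dV$.
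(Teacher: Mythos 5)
Your proposal is correct and follows essentially the same route as the paper: polar geodesic coordinates, the Riccati-type inequality for the excess $\psi=(Lr-(n-1)h'/h)_+$ derived from Lemma~\ref{Lr_formula_lemma}, multiplication by $\psi^{2p-2}$ times the weighted density, the product-rule/integration-by-parts identity exploiting $\partial_s\log(D\omega)=Lr$, and positivity of the resulting coefficients under $p>n/2$. The only cosmetic difference is that you close with Young's inequality plus pointwise absorption of the $\psi^{2p}$ term, whereas the paper applies H\"older's inequality to the integrals and then rearranges — these are interchangeable and give the same conclusion (with a different explicit constant). One small imprecision: the coefficient $\tfrac{2(2p-1)}{n-1}-1$ of the $\lambda\psi^{2p-1}$ term is positive already for $p>(n+1)/4$, a weaker condition than $p>n/2$; it is not ``exactly when $p>n/2$'' that both coefficients turn positive, but rather the first coefficient $\tfrac{2p-1}{n-1}-1$ alone imposes $p>n/2$, and the second is then automatic for $n>1$. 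This does not affect the validity of the argument.
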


\begin{proof}
Integrating in polar geodesic coordinates we have
\begin{equation*}
\int_{B_R} f DdV = \int_{S^{m-1} }d\theta \int_0^{\min\{R, c(\theta)\}}
f(t\theta){(D\omega)(t\theta)}   dt
\end{equation*}
where $ {\omega}$ is the volume density
with respect to Lebesgue measure $dtd\theta$, and $c(\theta)$ is the distance from $o$
to the cut locus along the ray $t\to t\theta$.
It follows that
it suffices to prove that for every $\theta\in S^{m-1}$
\begin{equation}
\label{Petersen eq5}
\int_0^{{\min\{R, c(\theta)\}}} \psi^{2p}(t\theta) {(D\omega)(t\theta)}  dt \leq C
\int_0^{{\min\{R, c(\theta)\}}} \rho^{p}(t\theta) {(D\omega)(t\theta)}
dt.
\end{equation}
An easy computation which uses (\ref{psi_ineq}) yields
\begin{equation*}
\frac{\partial}{\partial t} \{ L r - (n-1)\frac{h'}{h}\}
\leq
-\frac {(L r)^2}{n-1}  - Ric_{n,m}( \nabla r, \nabla r) - (n-1)
\Bigl\{  \frac {h''}h -
\Bigl(\frac{h'}{h}\Bigr)^2\Bigr\}
\end{equation*}
Thus, recalling the definitions of $\psi$ and $\rho$, we deduce that
the locally Lipschitz function $\psi$ satisfies the
differential inequality
\begin{equation*}
\psi' + \frac{\psi^2}{n-1} +2\frac {h'}h \psi \leq \rho,
\end{equation*}
on the set where $\rho>0$ and a.e. on $(0,+\infty)$.
Multiplying through by $\psi^{2p-2} D\omega,$ and integrating
we obtain
\begin{equation}
\label{Petersen eq6}
\int_0^{r}\bigl(\psi' \psi^{2p-2}  + \frac 1 {n-1}
\psi^{2p}  + 2 \frac{h'}h \psi^{2p-1}\bigr) D\omega
\leq \int_0^{r} \rho \psi^{2p-2} D \omega.
\end{equation}
On the other hand, integrating by parts, and recalling that
\begin{equation*}
(D\omega)^{-1} \partial (D\omega)/\partial t = L r\leq \psi + (n-1) \frac {h'}h
\end{equation*}
and that $\psi(t\theta)= 0$ if $t\geq c(\theta)$, yield
\begin{equation*}
\begin{split}
\int_0^r \psi' \psi^{2p-2} \omega &= \frac 1{2p-1} \psi(r)^{2p-1}
(D\omega)(r\theta)  - \frac 1 {2p-1} \int_0^r \psi^{2p-1} L r
D\omega \\
&\geq
- \frac 1 {2p-1} \int_0^r \psi^{2p-1}
\bigl(\psi + (n-1)\frac{h'}h \bigr)
D\omega.
\end{split}
\end{equation*}
Substituting this into (\ref{Petersen eq6}), and using H\"older inequality
we obtain
\begin{equation*}
\begin{split}
\Bigl(\frac 1{n-1}- \frac 1 {2p-1}\Bigr) \int_0^r \psi^{2p} D\omega
&+ \Bigl(2 - \frac {n-1} {2p-1}\Bigr) \int_0^r \psi^{2p-1}\frac {h'}h
D\omega \\
&\leq \int_0^r \rho\psi^{2p-2} D\omega \\
&\leq\Bigl( \int _0^r \rho^p
D\omega\Bigr)^{1/p}\Bigl(\int_0^r\psi^{2p}D\omega \Bigr)^{(p-1)/p},
\end{split}
\end{equation*}
and, since the coefficient of the first integral
on the left hand side is positive, by the assumption on $p$, while
the second summand is nonnegative, rearranging and simplifying we
conclude that (\ref{Petersen eq5}) holds with
\begin{equation*}
C(n,p)=\Bigl(\frac 1{n-1}- \frac 1 {2p-1}\Bigr)^{-p}.
\end{equation*}
\end{proof}

We are now ready to state the announced weighted volume comparison theorem
under  assumptions on the $L^p$ norm of the modified Bakry-Emery Ricci curvature.

\begin{theorem}
\label{Petersen theorem}
Keeping  the notation introduced above, let  $p>n/2$
and let
\begin{equation}
\label{Petersen eq8}
f(t) = \frac{C_{n,p}^{1/2p} t A_{G,n}(t)}{V_{G,n}(t)^{1+1/2p}}
\Bigl(\int_{B_t} \rho^{p} DdV \Bigr)^{1/2p}.
\end{equation}
where $C_{n,p}$ is the constant in Lemma~\ref{Petersen lemma2}.
Then  for every $0<r<R$,
\begin{equation}
\label{Petersen eq7}
\Bigr(\frac{\volD B_R(o)}{V_{G,n} (R)}\Bigr)^{1/2p} -\Bigr(\frac{\volD B_r(o)}{V_{G,n}
(r)}\Bigr)^{1/2p}
\leq   \frac1{2p} \int_r^R f(t) dt.
\end{equation}
Moreover
for every $r_o>0$ there exists a constant $C_{r_o}$  such that, for
every $R\geq r_o$
\begin{equation}
\label{Petersen eq7'}
\frac{\volD B_R(o)}{V_{G,n} (R)}
\leq   \Bigl(C_{r_o} + \frac1{2p} \int_{r_o}^R f(t) dt \Bigr)^{2p},
\end{equation}
and
\begin{equation}
\label{Petersen eq7''}
\begin{split}
\frac{\volD \partial B_R(o)}{A_{G, n} (R)}
& \leq  \Bigl(C_{r_o} + \frac1{2p}\int_{r_o}^R f(t) dt \Bigr)^{2p}
\\&
+ \frac{ R}{V_{G,n}(R)^{1/2p}}
\Bigl(\int_{B_R} \rho^{p} \Bigr)^{1/2p}\Bigl(C_{r_o}
+ \frac 1 {2p} \int_{r_o}^R f(t) dt \Bigr)^{2p-1}
\end{split}
\end{equation}
\end{theorem}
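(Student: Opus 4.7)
The plan is to convert the inequality (\ref{Petersen eq4}) into a Bernoulli-type ordinary differential inequality for the ratio
\begin{equation*}
y(R) = \frac{\volD B_R(o)}{V_{G,n}(R)},
\end{equation*}
and then integrate it in closed form. Almost all the geometric and analytic work has already been carried out in (\ref{Petersen eq4}) and in Lemma \ref{Petersen lemma2}; what remains is an explicit integration.

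First I would insert the conclusion of Lemma \ref{Petersen lemma2} into the last factor on the right-hand side of (\ref{Petersen eq4}), replacing $\bigl(\int_{B_R}\psi^{2p}D\,dV\bigr)^{1/(2p)}$ by $C_{n,p}^{1/(2p)}\bigl(\int_{B_R}\rho^{p}D\,dV\bigr)^{1/(2p)}$. Recognizing the resulting coefficient of $y(R)^{1-1/(2p)}$ as precisely the function $f(R)$ introduced in (\ref{Petersen eq8}), I obtain
\begin{equation*}
y'(R) \leq f(R)\, y(R)^{1-1/(2p)}.
\end{equation*}
Since $y>0$ on $(0,+\infty)$, this is equivalent to
\begin{equation*}
\frac{d}{dR}\bigl(y(R)^{1/(2p)}\bigr) = \frac{1}{2p}\, y(R)^{-1+1/(2p)}\, y'(R) \leq \frac{1}{2p}\, f(R),
\end{equation*}
and integrating from $r$ to $R$ gives (\ref{Petersen eq7}) at once. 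To obtain (\ref{Petersen eq7'}), I would then specialize $r=r_o$ and set $C_{r_o} = y(r_o)^{1/(2p)}$, which is finite because near the origin $\volD B_r$ is comparable to $r^m$ (e.g.\ by applying Theorem \ref{theorem_volume_comparison} with a constant $H>0$ such that $\mathrm{Ricc}_{n,m}\geq -(n-1)H^2$ on $B_{r_o}$), and finally raise both sides to the $2p$-th power.

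For (\ref{Petersen eq7''}), I would differentiate the identity $\volD B_R = y(R)\, V_{G,n}(R)$, using $\frac{d}{dR}\volD B_R = \volD \partial B_R$ (co-area) and $V'_{G,n}=A_{G,n}$, to write
\begin{equation*}
\frac{\volD \partial B_R}{A_{G,n}(R)} = y(R) + \frac{V_{G,n}(R)}{A_{G,n}(R)}\, y'(R).
\end{equation*}
Bounding $y'(R)$ by $f(R)\, y(R)^{1-1/(2p)}$ and noting that
\begin{equation*}
\frac{V_{G,n}(R)}{A_{G,n}(R)}\, f(R) = \frac{C_{n,p}^{1/(2p)}\, R}{V_{G,n}(R)^{1/(2p)}}\Bigl(\int_{B_R}\rho^p\, D\,dV\Bigr)^{1/(2p)},
\end{equation*}
I would then apply (\ref{Petersen eq7'}) to bound $y(R)$ and $y(R)^{1-1/(2p)}$ by $\bigl(C_{r_o}+\frac{1}{2p}\int_{r_o}^R f\bigr)^{2p}$ and $\bigl(C_{r_o}+\frac{1}{2p}\int_{r_o}^R f\bigr)^{2p-1}$ respectively, producing the stated estimate. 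The only real conceptual hurdle was already overcome in Lemma \ref{Petersen lemma2}, which transfers the pointwise defect $\psi$ of the weighted Laplacian comparison into an $L^p$-norm of the curvature defect $\rho$; beyond that, the present argument is the routine integration of a separable differential inequality, and I do not anticipate any further technical difficulty.
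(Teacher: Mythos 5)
Your proposal is correct and follows essentially the same route as the paper: define $y(R)=\volD B_R/V_{G,n}(R)$, combine (\ref{Petersen eq4}) with Lemma~\ref{Petersen lemma2} to obtain the Bernoulli inequality $y'\le f\,y^{1-1/2p}$, integrate $\frac{d}{dR}y^{1/2p}\le\frac{1}{2p}f$ to get (\ref{Petersen eq7}) and then (\ref{Petersen eq7'}) with $C_{r_o}=y(r_o)^{1/2p}$, and finally express $\volD\partial B_R/A_{G,n}(R)=y+ (V_{G,n}/A_{G,n})\,y'$ and substitute the bounds to get (\ref{Petersen eq7''}). The only cosmetic difference is that you carry the factor $C_{n,p}^{1/2p}$ explicitly in the last estimate (which is actually more faithful to the computation), whereas the paper's displayed (\ref{Petersen eq7''}) appears to drop it.
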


\begin{proof}
Set
\begin{equation*}
y(r) =\frac{\vol_D B_r(o)}{V_{G,n} (r)}.
\end{equation*}
According to (\ref{Petersen eq4})
 Lemma~\ref{Petersen lemma2} and (\ref{Petersen eq8}) we have
\begin{equation*}
\begin{cases}
y'(t)  \leq
 f(t) y(t)^{1-1/2p},&\\
 y(t)\sim c_m t^{m-n} \text{ as } t\to 0+,\quad  y(t)>0 \,\, \text{ if } t>0.
\end{cases}
\end{equation*}
whence, integrating between $r$ and $R$ we obtain
\begin{equation*}
y(R)^{1/2p} - y(r)^{1/2p}\leq \frac{1}{2p} \int_r^R f(t) \, dt,
\end{equation*}
that is, (\ref{Petersen eq7}), and (\ref{Petersen eq7'}) follows at
one with $C_{r_o}= \Bigr(\frac{\volD B_{r_o}(o)}{V_{G,n}
(r_o)}\Bigr)^{1/2p}$.
On the other hand, according to (\ref{Petersen eq4}) and
Lemma~\ref{Petersen lemma2},
\begin{equation*}
\frac{\volD \partial B_R} {A_{G,n}(R)}  \leq \frac{\volD B_R } {V_{G,n}(R)}
+
\frac{ R}{V_{G,n}(R)^{1/2p}}
\Bigl(\int_{B_t} \rho^{p} DdV \Bigr)^{1/2p}
\Bigl(\frac{\volD B_R } {V_{G,n}(R)}\Bigr)^{1-1/2p}
\end{equation*}
and the conclusion follows inserting (\ref{Petersen eq7'}).
\end{proof}

Keeping the  notation introduced above,
assume, for instance, that $G=B^2\geq 0$, so that
\begin{equation*}
A_{G,n}(t) =
\begin{cases}
t^{n-1} &\text{ if } B=0\\
(B^{-1}\sinh Bt)^{n-1} &\text{  if } B>0
\end{cases}
\end{equation*}
and suppose that
\begin{equation*}
\rho=[ \mathrm{Ric_{n,m}}+ (n-1)B^2]_- \in L^p(M, DdV),
\end{equation*}
for some $p>n/2$. Then, arguing as in the proof
of \cite{PigolaRigoliSetti-TheBook}  Corollary 2.21,
we  deduce that  for every $r_o$
sufficiently small there exist constants $C_1$ and $C_2$, depending  on
$r_o$, $B$ $m$   $p$ and on the $L^p(M, DdV)$-norm of $\rho$,
such that, for every $R\geq r_o$,
\begin{equation*}
\volD B_R \leq C_1
\begin{cases}
R^{2p} &\text{ if } B=0\\
e^{(n-1)BR} &\text{ if } B>0.
\end{cases}
\end{equation*}
and
\begin{equation*}
\volD \partial B_R \leq
C_2
\begin{cases}
R^{2p-1} &\text{ if } B=0\\
e^{(n-1)BR} &\text{ if } B>0.
\end{cases}
\end{equation*}

\section{Proof of Theorem~A and further results}
\label{section_proof_ThmA}
The aim of this section is to give a proof of a somewhat stronger
form of Theorem~A (see Theorem~\ref{thm3.21} below), together with a version
of the result valid when (\ref{KO}) fails.

The idea of proof of Theorem~A is to  construct a function $v(x)$ defined on
an annular region $B_{\bar R}\setminus B_{r_o}$, with $0<r_o<\bar R$ sufficiently large,
with the following properties: for fixed $r_o<r_1<\bar R$ and $0<\epsilon<\eta$
\begin{equation}
\label{eq3.1}
\begin{cases}
v(x)=\epsilon &\text{on } \bdr B_{r_o}\\
\epsilon \leq v(x)\leq \eta &\text{on } B_{r_1}\setminus B_{r_o}\\
v(x)\to +\infty &\text{as } r(x)\to +\infty,
\end{cases}
\end{equation}
and $v$ is a weak supersolution on $B_{\bar R}\setminus B_{r_o}$ of
\begin{equation}
\label{eq3.2}
L_{D,\vp} w =   b(x) f(w)\ell(|\nabla w|).
\end{equation}
This is achieved by taking $v$ of the form
\begin{equation}
\label{eq3.3}
v(x) = \alpha(r(x))
\end{equation}
where $\alpha$ is a suitable supersolution of the radialized
inequality (\ref{eq3.2}), whose construction depends in a crucial way on the
validity of the Keller--Osserman condition (\ref{KO}).

The conclusion is then reached comparing $v$ with the solution of
(\ref{main_ineq}). To this end, we will extend a comparison
technique first introduced in \cite{PigolaRigoliSetti-JFA2002}

Finally, in Theorem~\ref{thm3.35} below we will consider the case where the
Keller--Osserman condition fails, that is
\begin{equation}
\label{not_KO}
\frac 1 {K^{-1}\bigl(F(t)\bigr)} \not\in L^1(+\infty).
\end{equation}
Its proof is based on a modification of the previous arguments and
uses (\ref{not_KO}) in a way which is, in some sense, dual to the
use of (\ref{KO}) in the proof of Theorem~A.

We begin with the following simple

\begin{lemma}
\label{lemma_KOsigma}
Assume that $f$, $\ell$ and $\vp$ satisfy the assumptions ($F_1$),
($L_1$) and ($\vp\ell$)$_2$, and let $\sigma>0$. Then (\ref{KO}) holds
if and only if
\begin{equation}
\label{KOsigma}
\tag{KO$\sigma$}
\frac 1{K^{-1}(\sigma F(s))}\in L^1(+\infty).
\end{equation}
\end{lemma}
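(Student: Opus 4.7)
The plan is to reduce both conditions to integrals of a common form via the substitution $u = F(t)$, and then to exploit the $C$-increasing property of $f$ (from \textup{($F_1$)}) to compare the two. Since by \textup{($F_1$)} one has $f > 0$ on $\R^+$, the function $F$ is strictly increasing on $[T_0,+\infty)$ for any $T_0 > 0$. If $F$ is bounded above, then $K^{-1}(F(t))$ and $K^{-1}(\sigma F(t))$ both remain bounded, so both \textup{(KO)} and \textup{(KO$\sigma$)} fail trivially and the equivalence is automatic. Otherwise, $F:[T_0,+\infty)\to[F(T_0),+\infty)$ is a diffeomorphism, and substituting $u = F(t)$ in \textup{(KO)} and $u = \sigma F(t)$ in \textup{(KO$\sigma$)} yields
\begin{equation*}
\int_{T_0}^{+\infty}\frac{dt}{K^{-1}(F(t))} = \int_{F(T_0)}^{+\infty}\frac{du}{f(F^{-1}(u))\,K^{-1}(u)},
\end{equation*}
\begin{equation*}
\int_{T_0}^{+\infty}\frac{dt}{K^{-1}(\sigma F(t))} = \frac{1}{\sigma}\int_{\sigma F(T_0)}^{+\infty}\frac{du}{f(F^{-1}(u/\sigma))\,K^{-1}(u)}.
\end{equation*}

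In each of the two directions of the desired equivalence, one implication is immediate from the monotonicity of $K^{-1}$: if $\sigma\geq 1$ then pointwise $K^{-1}(\sigma F(t))\geq K^{-1}(F(t))$, giving \textup{(KO)} $\Rightarrow$ \textup{(KO$\sigma$)}; while if $\sigma\leq 1$ the reverse pointwise bound gives \textup{(KO$\sigma$)} $\Rightarrow$ \textup{(KO)}. For the remaining (non-trivial) implications, the key point is that since $F^{-1}$ is monotone, the arguments $F^{-1}(u)$ and $F^{-1}(u/\sigma)$ are ordered according to the sign of $\sigma-1$; the $C$-increasing property of $f$ then yields either $f(F^{-1}(u/\sigma))\leq C f(F^{-1}(u))$ when $\sigma\geq 1$, or $f(F^{-1}(u))\leq C f(F^{-1}(u/\sigma))$ when $\sigma\leq 1$. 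Plugging these into the two rewritten integrals above shows that they differ by at most a multiplicative factor of the form $C\sigma^{\pm 1}$ on the common range of integration, so the two are simultaneously finite or infinite.

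The only bookkeeping point to handle is that after substitution the two integrals have different lower limits, $F(T_0)$ and $\sigma F(T_0)$; but the piece between them is bounded (the integrand is continuous and positive on a compact interval), so it does not affect convergence at infinity. I do not anticipate any substantive obstacle: once the change of variable is carried out, the argument reduces to a clean application of the $C$-increasing hypothesis on $f$.
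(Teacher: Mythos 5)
Your proof is correct, and it takes a genuinely different route from the paper. You change variables $u = F(t)$ and $u = \sigma F(t)$ to bring both integrals into the common form $\int^{+\infty} du / \bigl(f(F^{-1}(\cdot))\,K^{-1}(u)\bigr)$ and then compare the integrands directly, using the $C$-increasing property of $f$ once $F^{-1}$ has made the two arguments comparable. This handles $\sigma \geq 1$ and $\sigma \leq 1$ symmetrically in a single pass. The paper instead proves the result for $0 < \sigma \leq 1$ (one direction by monotonicity of $K^{-1}$, the other by first establishing the scaling inequality $F(Ct/\sigma) \geq \sigma^{-1} F(t)$ from the $C$-increasing property and then substituting $s = Ct/\sigma$), and it deals with $\sigma > 1$ by a separate reduction replacing $F$ by $\sigma F$. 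Your approach is arguably a touch cleaner because it is symmetric in $\sigma$; the paper's approach is slightly more robust because it never inverts $F$ (a moot point here, since ($F_1$) gives $f > 0$ on $\R^+$, and in fact under ($F_1$) the $C$-increasing property forces $f$ to be bounded below by a positive constant on $[1,+\infty)$, so $F$ is always unbounded and the degenerate case you set aside cannot actually occur). One stylistic caveat: your phrase ``differ by at most a multiplicative factor $C\sigma^{\pm 1}$'' reads as a two-sided pointwise bound, but the $C$-increasing comparison is one-sided for any fixed $\sigma$; the other direction is supplied by the monotonicity of $K^{-1}$, as you note earlier, so the logic is sound, but I would phrase the conclusion as two one-sided integral estimates rather than a two-sided pointwise one.
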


\begin{proof}
We consider first the case $0<\sigma\leq 1$.
Since $K^{-1}$ is non-decreasing,
\begin{equation*}
\int^{+\infty}
\frac {ds}{K^{-1}(F(s))} \leq\int^{+\infty}
\frac 1{K^{-1}(\sigma F(s))}.
\end{equation*}
On the other hand, if  $C\geq 1$ is such that $\sup_{s\leq t}f(s)\leq C
f(t)$, then, for every $0<\sigma\leq1$, $f(C\sigma^{-1} t) \geq C^{-1}
f(t)$ and
\begin{equation*}
F(\frac{Ct}\sigma ) =\int_0^{\frac{Ct }\sigma} f(z) dz = \frac C\sigma
\int_0^t f(\frac{C\xi}\sigma) d\xi \geq \frac 1 \sigma\int_0^tf(\xi)
d\xi = \frac 1\sigma F(t),
\end{equation*}
so,   using the monotonicity of $K^{-1}$, we obtain
\begin{equation*}
\int^{+\infty}
\frac {ds}{K^{-1}(\sigma F(s))} =\frac C\sigma\int^{+\infty}
\frac {dt} {K^{-1}(\sigma F(\frac{ Ct}\sigma ))}\leq  \frac C\sigma \int^{+\infty}
\frac {dt} {K^{-1}\bigl(F(t)\bigr)} ,
\end{equation*}
showing that (\ref{KO}) and (\ref{KOsigma}) are equivalent in the
case $\sigma \leq 1$.

Consider now the case $\sigma >1$, and set  $f_\sigma = \sigma f$, $F_\sigma = \sigma F$.
Since (\ref{KOsigma}) is precisely  (\ref{KO}) for $F_\sigma$,  and since $\sigma^{-1}\leq 1$,
by what we have just proved it is equivalent to
\begin{equation*}
\frac 1{K^{-1}\bigl(\sigma^{-1}F_\sigma(s)\bigr)}
=
\frac 1{K^{-1}\bigl(F(s)\bigr)}
\in L^1(+\infty),
\end{equation*}
as required.
\end{proof}

We note for future use that the conclusion of the lemma depends only on the
monotonicity of $K^{-1}$ and the $C$-monotonicity of $f$.

Before proceeding toward our main result we would like to explore the mutual connections
between ($\theta$) and ($\vp\ell$). To simplify the writing, with the
statement ``($\theta$)$_1$ holds" we will mean that the first half of
condition ($\theta$) is valid.

\begin{proposition}
\label{proposition_theta_vpell}
Assume that conditions ($\Phi_0$) and ($L_1$) hold. Then ($\theta$)$_1$
with $\theta<2$ implies ($\vp\ell$)$_2$, and ($\theta$)$_2$ with
$\theta<1$ implies ($\vp\ell$)$_1$. As a consequence, ($\theta$)
with $\theta<1$ implies ($\vp\ell$).
\end{proposition}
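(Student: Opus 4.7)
The plan is to unpack both implications directly from the $C$-monotonicity in ($\theta$), with no further analytic input besides the elementary fact that a positive multiplicative constant times a power of $t$ is integrable near $0$ (resp.\ near $+\infty$) exactly when the exponent lies in the appropriate range. The third statement is an immediate conjunction of the first two, since $\theta<1$ automatically satisfies $\theta<2$.

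For the first assertion, ($\theta$)$_1$ with $\theta<2$ implies ($\vp\ell$)$_2$, I would write
\[
\frac{t\vp'(t)}{\ell(t)} = t^{1-\theta}\cdot \frac{t^{\theta}\vp'(t)}{\ell(t)},
\]
and use the two equivalent reformulations of the $C$-increasing property for $g(t)=t^{\theta}\vp'(t)/\ell(t)$. Near $0$: for $0<s\leq a$ we have $g(s)\leq Cg(a)$, so
\[
\int_0^a \frac{t\vp'(t)}{\ell(t)}\,dt \leq C\,\frac{a^{\theta}\vp'(a)}{\ell(a)}\int_0^a t^{1-\theta}\,dt,
\]
which is finite precisely because $1-\theta>-1$, yielding $t\vp'(t)/\ell(t)\in L^1(0^+)$. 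Near $+\infty$: for $s\geq a$ we have $g(s)\geq C^{-1}g(a)$, so
\[
\int_a^R \frac{t\vp'(t)}{\ell(t)}\,dt \geq C^{-1}\,\frac{a^{\theta}\vp'(a)}{\ell(a)}\int_a^R s^{1-\theta}\,ds,
\]
which diverges as $R\to +\infty$ since $1-\theta>-1$; hence $t\vp'(t)/\ell(t)\notin L^1(+\infty)$. The positivity $a^{\theta}\vp'(a)/\ell(a)>0$ used in both estimates is guaranteed by ($\Phi_0$) and ($L_1$).

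For the second assertion, ($\theta$)$_2$ with $\theta<1$ implies ($\vp\ell$)$_1$, I would fix any $t_0>0$ and use the $C$-increasing property of $\tilde g(t)=t^{\theta-1}\vp(t)/\ell(t)$ on $(0,t_0]$, which reads
\[
\frac{\vp(s)}{\ell(s)} = s^{1-\theta}\cdot \tilde g(s) \leq C\,s^{1-\theta}\,\tilde g(t_0) \qquad\text{for all } 0<s\leq t_0.
\]
Since $1-\theta>0$, letting $s\to 0^+$ forces $\vp(s)/\ell(s)\to 0$, which is even stronger than $\liminf_{s\to 0^+}\vp(s)/\ell(s)=0$.

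The third statement then follows by combining the two: under ($\theta$) with $\theta<1$, part ($\theta$)$_1$ holds with $\theta<1<2$ so ($\vp\ell$)$_2$ follows from the first implication, while ($\theta$)$_2$ with $\theta<1$ yields ($\vp\ell$)$_1$ from the second; together they constitute ($\vp\ell$). There is no real obstacle here: the argument is a direct computation using the definition of $C$-increasing in its sup- and inf-forms, and the role of the hypothesis $\theta<2$ (resp.\ $\theta<1$) is exactly to make the power $t^{1-\theta}$ integrable at $0$ (resp.\ to vanish at $0$).
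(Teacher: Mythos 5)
Your proof is correct and follows essentially the same route as the paper: both arguments use the sup/inf reformulations of $C$-monotonicity for the functions $t^{\theta}\vp'(t)/\ell(t)$ and $t^{\theta-1}\vp(t)/\ell(t)$ and reduce the claims to the integrability/vanishing behavior of the power $t^{1-\theta}$. The only cosmetic difference is that the paper fixes $t=1$ to extract pointwise bounds on the integrand before integrating, whereas you bound the integrals directly — these are interchangeable.
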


\begin{proof}
Assume ($\theta$)$_1$, that is, the function
$t\to\frac{\vp'(t)}{\ell(t)}t^\theta$ is $C$-increasing on $\R^+.$
By definition there exists $C\geq 1$ such that
\begin{equation*}
0<s^{\theta}\frac{\vp'(st)}{\ell(st)} \leq C
\frac{\vp'(t)}{\ell(t)} \quad \forall t\in \R^+\,\, s\in
(0,1],
\end{equation*}
or, equivalently,
\begin{equation}
\label{prop3.2_eq1}
s^{\theta}\frac{\vp'(st)}{\ell(st)} \geq C^{-1}
\frac{\vp'(t)}{\ell(t)} \quad \forall t\in \R^+\,\, s\in
[1, +\infty).
\end{equation}
Letting $t=1$, we deduce that if $\theta<2$ then $\frac{s\vp'(s)}{\ell(s)}\in
L^1(0+)\setminus L^1(+\infty)$, which is ($\vp\ell$)$_2$.

In an entirely similar way, if ($\theta$)$_2$ holds, that is,
\begin{equation*}
\frac{\vp(st)}{\ell(st)}(st)^{\theta -1}  \leq C
\frac{\vp(t)}{\ell(t)}(t)^{\theta -1} \quad \forall
t\in \R^+\,\, s\in (0,1],
\end{equation*}
and $\theta<1$, then $s^{\theta-1}\frac{\vp(s)}{\ell(s)}\in L^\infty((0,1))$, and
\begin{equation*}
\lim_{s\to 0+} \frac{\vp(s)}{\ell(s)} =0,
\end{equation*}
which implies ($\vp\ell$)$_1$.
\end{proof}

\begin{remark}\label{rmk_theta_vpell}
{\rm
Note that the above argument above also shows that if ($\theta$)$_2$ holds with $\theta<2$ then
$\frac{\vp(t)}{\ell(t)} \in L^1(0^+)\setminus L^1(+\infty)$.
}
\end{remark}

\begin{proposition}
\label{prop 3.2bis}
Assume that conditions ($\Phi_0$) and ($L_1$) hold, and let $F$ be a positive function
defined on $\R_0^+$. If ($\theta$)$_1$ holds with $\theta<2$, then there
exists a constant  $B\geq 1$ such that, for every $\sigma \leq 1$ we have
\begin{equation}
\label{eq_prop3.2bis_1}
\frac{\sigma^{1/(2-\theta)}}{K^{-1}(\sigma F(t))}\leq
\frac B{K^{-1}(F(t))}
\quad \text{ on } \R^+.
\end{equation}
\end{proposition}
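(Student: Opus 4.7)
The plan is to translate the inequality for $K^{-1}$ into an equivalent inequality for $K$: if I can show that there exists a constant $C'>0$ such that for every $\lambda\in(0,1]$ and every $x\in\R^+$,
\begin{equation*}
K(\lambda x)\leq C'\lambda^{2-\theta}K(x),
\end{equation*}
then choosing $\lambda=(\sigma/C')^{1/(2-\theta)}\leq 1$ (permissible since $\sigma\leq 1$, $C'\geq 1$ and $2-\theta>0$) yields $K(\lambda x)\leq \sigma K(x)$, and applying the non-decreasing function $K^{-1}$ with $x=K^{-1}(F(t))$ gives
\begin{equation*}
K^{-1}(\sigma F(t))\geq (\sigma/C')^{1/(2-\theta)}K^{-1}(F(t)),
\end{equation*}
which is exactly the desired bound with $B=(C')^{1/(2-\theta)}$.

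To prove the auxiliary inequality for $K$, I would substitute $s=\lambda u$ in the integral defining $K(\lambda x)$:
\begin{equation*}
K(\lambda x)=\int_0^{\lambda x}\frac{s\vp'(s)}{\ell(s)}\,ds=\lambda^2\int_0^x u\,\frac{\vp'(\lambda u)}{\ell(\lambda u)}\,du.
\end{equation*}
Now invoke hypothesis ($\theta$)$_1$, which by Definition~\ref{Cincreasing_def} (equivalently, by the first form of $C$-monotonicity written with $t$ replaced by $u$ and $s$ by $\lambda u$) gives, for $\lambda\in(0,1]$,
\begin{equation*}
(\lambda u)^{\theta}\frac{\vp'(\lambda u)}{\ell(\lambda u)}\leq C\,u^{\theta}\frac{\vp'(u)}{\ell(u)},
\end{equation*}
i.e.\ $\vp'(\lambda u)/\ell(\lambda u)\leq C\lambda^{-\theta}\vp'(u)/\ell(u)$. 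Plugging this in,
\begin{equation*}
K(\lambda x)\leq C\lambda^{2-\theta}\int_0^x\frac{u\vp'(u)}{\ell(u)}\,du=C\lambda^{2-\theta}K(x),
\end{equation*}
which is the required inequality with $C'=C$.

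The main (in fact, only) issue is ensuring the application of $C$-monotonicity is valid, which needs $\lambda\leq 1$ and $2-\theta>0$; both are guaranteed by our hypotheses ($\sigma\leq 1$, $C\geq 1$, $\theta<2$). Collecting constants we obtain the proposition with $B=C^{1/(2-\theta)}\geq 1$.
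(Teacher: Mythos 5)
Your proof is correct and is essentially the paper's own argument: both establish the scaling estimate $K(\lambda x)\lesssim\lambda^{2-\theta}K(x)$ from $(\theta)_1$ by a change of variables in the integral defining $K$, then apply $K^{-1}$ and choose $\lambda$ so as to collapse $\sigma$. The only cosmetic difference is that you work with $\lambda\le 1$ (using the $C$-increasing property directly) while the paper uses the equivalent reformulation with $\lambda\ge 1$; both yield $B=C^{1/(2-\theta)}$.
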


\begin{proof}
Observe first of all that according to Proposition~\ref{proposition_theta_vpell},
($\theta$)$_{1}$ with $\theta<2$ implies ($\vp\ell$)$_2$, so that
$K^{-1}$ is well defined on $\R^+_0$.

Changing variables in the definition of $K$, and  using (\ref{prop3.2_eq1}) above,
for every $\lambda \geq 1$ and $t\in \R^+$, we have
\begin{equation*}
\begin{split}
K(\lambda t) &= \int_0^{\lambda t} s\frac{\vp'(s)}{\ell(s)}ds =
\lambda ^2 \int _0^t s\frac{\vp'(\lambda s)}{\ell(\lambda s)}ds \\
&\geq
C^{-1} \lambda ^{2-\theta} \int_0^{t} s\frac{\vp'(s)}{\ell(s)}ds
= C^{-1} \lambda ^{2-\theta} K(t),
\end{split}
\end{equation*}
where $C\geq 1$ is the constant in ($\theta$)$_1$.
Applying $K^{-1}$ to both sides of the above inequality, and setting
$t= K^{-1}(\sigma F(s))$ we deduce that
\begin{equation*}
\lambda K^{-1}(\sigma F(s)) \geq K^{-1}(\lambda^{2-\theta}\sigma
C^{-1} F(s)),
\end{equation*}
whence, setting $\lambda =(C/\sigma)^{1/(2-\theta)}\geq
1$, the required conclusion follows with $B=C^{1/(2-\theta)}$.
\end{proof}

\begin{remark}
\label{rmk prop 3.2bis}
{\rm
We note for future use that the estimate holds for any positive function $F$ on $\R^+$,
without any monotonicity property, and it depends only on the
the fact that the integrand $\psi(s) = s\vp'(s)/\ell(s)$
in the definition of $K$ satisfies the $C$-monotonicity property
\begin{equation*}
\psi(\lambda s) \geq C^{-1}   \lambda^{1-\theta}  \psi(s), \quad
\forall s\in \R^+,\,\forall \lambda \geq 1.
\end{equation*}
}
\end{remark}

In order to state the next proposition we introduce the following
assumption
\begin{itemize}
\item[(b)] $\tilde b(t)\in C^1(\R_0^+)$, $\tilde b(t)>0$, $\tilde b'(t)\leq
0$ for $t\gg 1$, and $\tilde b^\lambda \not \in L^1(+\infty) $ for
some $\lambda >0$.
\end{itemize}

\begin{proposition}
\label{prop3.7}
Assume that conditions ($\Phi_0$), ($F_1$), ($L_1$), ($L_2$)
($\vp\ell$)$_1$, ($\theta$), (\ref{KO}) hold, and let $\tilde b$ a function
satisfying assumption (b),  $A>0,$  and $\beta \in [-2,+\infty)$.
If $\lambda$ and $\theta$ are the constants specified in
(b) and ($\theta$), assume also that
\begin{equation}
\label{eq3.8}
\begin{split}
&\lambda (2-\theta)\geq 1 \text{ and }\\
\text{ either } &\rmi t^{\beta/2} \tilde b(t)^{\lambda(1-\theta)-1}
\int_1^t\tilde b(s)^\lambda ds \leq C
\,\text{ for }\,  t\geq t_o\\
\text{ or } & \rmii t^{\beta/2} \tilde b(t)^{\lambda(1-\theta)-1} \leq C\
\text{ for } t\geq t_o \, \text{ and }\, \theta <1.\\
\end{split}
\end{equation}
Then there exists $T>0$ sufficiently large such that, for every $T\leq
t_0<t_1$ and $0<\epsilon<\eta$, there exist $\bar T>t_1$ and a $C^2$
function $\alpha : [t_0, \bar T)\to [\epsilon, +\infty)$ which
is a solution of the problem
\begin{equation}
\label{eq3.9}
\begin{cases}
\vp'(\alpha ') \alpha '' + A t^{\beta/2} \vp(\alpha ') \leq \tilde
b(t) f(\alpha) \ell (\alpha) \quad \text{on }\, [t_0, \bar T)&\\
\alpha '>0 \text{ on } [t_0, \bar T), \,\, \alpha(t_0) = \epsilon,
\, \, \alpha(t)\to +\infty \text{ as } t\to \bar T^-&
\end{cases}
\end{equation}
and satisfies
\begin{equation}
\label{eq3.10}
\epsilon \leq \alpha \leq \eta \quad \text{ on } \, [t_0, t_1].
\end{equation}
\end{proposition}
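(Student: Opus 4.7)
The plan is to construct $\alpha$ implicitly, via an ansatz whose blow-up is governed by the Keller--Osserman integral. Discarding the drift $At^{\beta/2}\vp(\alpha')$ and the coefficient $\tilde b(t)$ in (\ref{eq3.9}) leads to the prototypical requirement $K(\alpha')=\sigma F(\alpha)$ for a small parameter $\sigma>0$, which is a separable first order ODE whose solution satisfies $\vp'(\alpha')\alpha''=\sigma f(\alpha)\ell(\alpha')$ after one differentiation (since $K'(s)=s\vp'(s)/\ell(s)$). To accommodate the drift and the variable weight, I would refine the ansatz to $\alpha'=\Psi(t)K^{-1}\bigl(\sigma F(\alpha)\bigr)$, with an auxiliary positive function $\Psi$; a further differentiation produces an additional contribution proportional to $\Psi'/\Psi$, and the role of hypothesis (\ref{eq3.8}) is precisely to ensure that this contribution dominates $At^{\beta/2}\vp(\alpha')/\ell(\alpha')$.

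The analytic tools I would employ are Proposition~\ref{prop 3.2bis} and Lemma~\ref{lemma_KOsigma}. The former turns the $C$-monotonicity in the first half of ($\theta$) into the scaling estimate
\begin{equation*}
\frac{\sigma^{1/(2-\theta)}}{K^{-1}(\sigma F(s))}\leq \frac{B}{K^{-1}(F(s))},
\end{equation*}
which lets one control the interaction between $\sigma$ and $t$; the latter shows that (\ref{KO}) is equivalent to $1/K^{-1}(\sigma F)\in L^1(+\infty)$ for every $\sigma>0$, so that $\int_\epsilon^{+\infty}ds/K^{-1}(\sigma F(s))$ is finite, forcing blow-up of $\alpha$ at some finite $\bar T$. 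In case (i) of (\ref{eq3.8}) a natural choice is $\Psi(t)\asymp B(t)^{1/(2-\theta)}$ with $B(t)=\int_1^t\tilde b(s)^\lambda ds$; the constraint $\lambda(2-\theta)\geq 1$ is tuned exactly so that the drift is absorbed by $\Psi'/\Psi$ after using the bound $t^{\beta/2}\tilde b(t)^{\lambda(1-\theta)-1}B(t)\leq C$. In case (ii) one may take $\Psi$ essentially constant, since the stronger assumption $\theta<1$ together with Proposition~\ref{proposition_theta_vpell} yields ($\vp\ell$)$_1$ and therefore $\vp(\alpha')/\ell(\alpha')\to 0$ as $\alpha'\to 0^+$, rendering the drift negligible whenever the initial speed is kept small.

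Once $\alpha$ is produced on a maximal interval $[t_0,\bar T)$, two quantitative properties still need to be verified. The finiteness of $\bar T$ together with $\alpha(t)\to+\infty$ as $t\to\bar T^-$ follows because along the ansatz the relation $dt=ds/\bigl(\Psi(t(s))K^{-1}(\sigma F(s))\bigr)$ integrates to a finite quantity between $\epsilon$ and $+\infty$ thanks to Lemma~\ref{lemma_KOsigma}. The requirement $\alpha\leq\eta$ on $[t_0,t_1]$ is then secured by choosing $\sigma$ small and $T$ large: Proposition~\ref{prop 3.2bis} shows that $\int_\epsilon^\eta ds/K^{-1}(\sigma F(s))$ grows like $\sigma^{-1/(2-\theta)}$ as $\sigma\to 0^+$, so for sufficiently small $\sigma$ one obtains simultaneously $\bar T>t_1$ and the prescribed upper bound $\alpha(t_1)\leq\eta$, uniformly in $t_0\geq T$.

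The principal obstacle is the \emph{pointwise} balance, in the single variable $t$, between the three competing scales $At^{\beta/2}$, $\tilde b(t)$ and $\Psi(t)$, relying only on the $C$-monotonicity from ($\theta$) rather than true monotonicity of the relevant ratios. The dichotomy in (\ref{eq3.8}) is what makes the balance tractable: in case (i) the integrated weight $B(t)$ produces a differentiable $\Psi$ whose logarithmic derivative exactly cancels the drift, whereas in case (ii) the stronger hypothesis $\theta<1$ allows the drift contribution to be absorbed into an arbitrarily small fraction of the right-hand side of (\ref{eq3.9}).
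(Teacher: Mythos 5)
Your overall reduction is the right one: define $\alpha$ implicitly through a Keller--Osserman integral, use Lemma~\ref{lemma_KOsigma} to guarantee that $\int_\epsilon^{+\infty}ds/K^{-1}(\sigma F(s))$ is finite so that the blow-up time $\bar T$ is finite, and use Proposition~\ref{prop 3.2bis} to show that this integral grows like $\sigma^{-1/(2-\theta)}$ so that $\sigma$ can be tuned to secure $\alpha(t_1)\leq\eta$. The gap is in the choice of the auxiliary function $\Psi$. A direct computation with the ansatz $\alpha'=\Psi\,K^{-1}(\sigma F(\alpha))$, equivalently $K(\alpha'/\Psi)=\sigma F(\alpha)$, gives
\begin{equation*}
\vp'\!\left(\frac{\alpha'}{\Psi}\right)\alpha''
= \vp'\!\left(\frac{\alpha'}{\Psi}\right)\frac{\Psi'}{\Psi}\,\alpha'
\;+\;\sigma f(\alpha)\,\Psi^{2}\,\ell\!\left(\frac{\alpha'}{\Psi}\right),
\end{equation*}
and two things are forced in order to progress. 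First, the term involving $\Psi'/\Psi$ must be discardable, which requires $\Psi'\leq 0$; second, passing from $\vp'(\alpha'/\Psi)$ to $\vp'(\alpha')$ via $(\theta)_1$ needs $\alpha'/\Psi\geq\alpha'$, i.e.\ $\Psi\leq 1$. Your proposed $\Psi\asymp B(t)^{1/(2-\theta)}$ with $B(t)=\int_1^t\tilde b^\lambda$ is increasing and eventually exceeds $1$, so both requirements fail: the additional $\Psi'/\Psi$ contribution is positive (it inflates $\alpha''$ instead of absorbing the drift), and the $(\theta)_1$ conversion runs in the wrong direction. Similarly, in case~(ii) taking $\Psi$ essentially constant loses the factor $\Psi^{2-\theta}$ which, after the $(\theta)_1$ step, is exactly what produces the coefficient $\tilde b(t)$ on the right-hand side of (\ref{eq3.9}): keeping the initial speed small does not manufacture a factor of $\tilde b$, and the drift is not ``negligible'' in any uniform sense.

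The paper resolves this by taking $\Psi=\tilde b^\lambda$ in \emph{both} cases. Hypothesis~(b) supplies precisely the two needed properties ($\tilde b'\leq 0$ and, after a harmless rescaling, $\tilde b\leq 1$), while $\lambda(2-\theta)\geq 1$ yields $\Psi^{2-\theta}=\tilde b^{\lambda(2-\theta)}\leq\tilde b$, recovering the correct coefficient. The drift $At^{\beta/2}\vp(\alpha')$ is not absorbed into the ansatz at all: one instead integrates the first-order relation $\vp(\alpha'/\tilde b^\lambda)\bigl(\alpha'/\tilde b^\lambda\bigr)'=\sigma\tilde b^\lambda f(\alpha)\ell(\alpha'/\tilde b^\lambda)$ over $[t_0,t]$, invokes $(\theta)_2$ to bound $\vp(\alpha')/\ell(\alpha')$ by a boundary term (which vanishes along a sequence $\sigma_k\to 0$ by $(\vp\ell)_1$) plus an integral term, and collects everything into a multiplicative error $N_\sigma(t)$ which tends uniformly to $0$ as $\sigma\to 0$. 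The case distinction in (\ref{eq3.8}) enters only at that last stage, distinguishing two mechanisms by which $N_\sigma\to 0$, not two different choices of $\Psi$.
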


\begin{proof}
Note first of all, that the first condition in (\ref{eq3.8}) forces $\theta<2$, and
($\vp\ell$)$_2$ follows from ($\theta$)$_1$.

We choose $T>0$ large enough that, by (b), $\tilde b(t)>0$ and $\tilde b'(t)\leq
0$ on $[T,+\infty)$. Since (b) and (\ref{eq3.8}) are invariant under scaling
of $\tilde b$,  we may assume without loss of generality that $\tilde b\leq 1$ on $[T,\infty)$.

Let $t_0,$ $t_1$ $\epsilon$ $\eta$ be as in the
statement of the proposition,  and, for a given $\sigma\in (0, 1]$,
set
\begin{equation}
\label{eq3.11}
C_\sigma = \int_\epsilon^{+\infty} \frac{ds}{K^{-1}(\sigma F(s))},
\end{equation}
which is well defined in view of (\ref{KO}) and
Lemma~\ref{lemma_KOsigma}. Since $\tilde b(t)\not\in
L^{1}(+\infty)$, there exists $T_\sigma> t_o$ such that
\begin{equation*}
C_\sigma = \int_{t_0} ^{T_\sigma} \tilde b(s)^\lambda ds.
\end{equation*}
We note that, by monotone convergence, $C_\sigma \to +\infty$ as $\sigma \to
0+,$ and we may therefore choose $\sigma>0$ small enough that $T_\sigma
>t_1$. We let $\alpha:[t_0,T_\sigma) \to [\epsilon, +\infty)$  be
implicitly defined by the equation
\begin{equation}
\label{eq3.12}
\int_{t}^{T_\sigma}\tilde b(s)^\lambda ds =
\int_{\alpha(t)}^\infty \frac{ds}{K^{-1}(\sigma F(s))},
\end{equation}
so that, by definition,
\begin{equation*}
\alpha(t_0)= \epsilon,\quad \alpha (t)\to +\infty \text{ as } t\to
T_\sigma-.
\end{equation*}
Differentiating (\ref{eq3.12}) yields
\begin{equation}
\label{eq3.13}
\alpha '(t) = \tilde b(t) ^\lambda K^{-1}(\sigma F(\alpha(t))),
\end{equation}
so that $\alpha '>0$ on $[t_0, T_\sigma)$, and
\begin{equation*}
\sigma F(\alpha) = K(\alpha'/\tilde b^{\lambda}).
\end{equation*}
Differentiating once
more, using the definition of $K$ and (\ref{eq3.13}),  we obtain
\begin{equation}
\label{eq3.14}
\sigma f(\alpha)\alpha' = K'(\alpha'/\tilde b^\lambda) (\alpha
'/\tilde b^\lambda)'= \frac{\alpha'}{\tilde b^\lambda} \frac
{\vp'(\alpha'/\tilde b^\lambda)}{\ell(\alpha'/\tilde b^\lambda)}
\Bigl(\frac{\alpha'}{\tilde b^\lambda}\Bigr)'.
\end{equation}
Since $f(t)> 0$ on $(0,\infty)$, $\alpha'>0$ and $\tilde b'\leq 0,$
we have $(\alpha'/\tilde b^\lambda)'\geq 0$ and $\alpha'/\tilde
b^\lambda$ is non-decreasing.
Moreover,
\begin{equation*}
\Bigl(\frac{\alpha'}{\tilde b^\lambda}\Bigr)' =
(\alpha''/\tilde b^\lambda) - \lambda (\alpha'\tilde b'/\tilde
b^{\lambda +1})\geq (\alpha''/\tilde b^\lambda).
\end{equation*}
Inserting this into (\ref{eq3.14}), using the fact that $\tilde b^{-\lambda}\geq 1$
and ($\theta$)$_1$ (in the form of (\ref{prop3.2_eq1})), and rearranging we obtain
\begin{equation}
\label{eq3.15}
\vp'(\alpha ') \alpha'' \leq \Bigl\{C\sigma \tilde
b^{\lambda(2-\theta)}\Bigr\} \tilde b f(\alpha) \ell (\alpha'),\quad \text{on }\,
[t_0, T_\sigma).
\end{equation}
In order to estimate the term $At^{\beta/2} \vp(\alpha')$ we
rewrite (\ref{eq3.14}) in the form
\begin{equation*}
\vp(\alpha'/\tilde b^\lambda)\bigl({\alpha'}/{\tilde b^\lambda}\bigr)'
= \sigma \tilde b^{\lambda} f(\alpha) \ell(\alpha'/\tilde
b^\lambda),
\quad \text{on } [t_0, T_\sigma),
\end{equation*}
integrate between $t_0$ and $t\in (t_0,T_\sigma)$,
use the fact that $\alpha$, and $\alpha/\tilde b^\lambda$ are increasing, and
$f$ and $\ell$ are $C$-increasing
to deduce that
\begin{equation*}
\vp(\alpha'/\tilde b^\lambda)\leq  \vp(\alpha'/\tilde
b^\lambda)(t_0)+ C\sigma f(\alpha) \ell(\alpha'/\tilde
b^\lambda) \int_{t_0}^t \tilde b(s)^\lambda \ ds,
\end{equation*}
for some constant $C\geq 1$. On the other hand,
since $t^{\theta -1} \vp(t)/\ell(t)$ is $C$-increasing and $\tilde b\leq
1$,
we have
\begin{equation}
\label{eq3.19}
\begin{split}
\frac{\vp(\alpha')}{\ell(\alpha')} &\leq C\tilde b^{\lambda
(1-\theta)} \frac{\vp(\alpha'/\tilde b^\lambda)}{\ell(\alpha'/\tilde
b^\lambda)}
\\&
\leq C \tilde b^{\lambda (1-\theta)}
\Bigl[\frac{\vp(\alpha'/\tilde b^\lambda)(t_0)}{\ell(\alpha'/\tilde
b^\lambda)} + \sigma f(\alpha)\int_{t_0}^t\tilde b(s)^\lambda
\Bigr]\\
&\leq
C \tilde b^{\lambda (1-\theta)-1}
\Bigl[\frac{\vp(\alpha'/\tilde b^\lambda)(t_0)}{f(\epsilon) \ell(\alpha'/\tilde
b^\lambda)(t_0)} + \sigma\int_{t_0}^t\tilde b(s)^\lambda
\Bigr] \tilde b f(\alpha).
\end{split}
\end{equation}
where the second inequality follows from the fact that
$\alpha$ and $\alpha'/\tilde b^\lambda$ are increasing,
and $f$ and  $\ell$ are $C$-increasing.

Using  (\ref{eq3.15}) and (\ref{eq3.19}), and recalling that, by
(\ref{eq3.13}), $(\alpha'/\tilde b^\lambda)(t_0) =  K^{-1}(\sigma
F(\epsilon))$, we obtain
\begin{equation}
\label{eq3.20}
\vp'(\alpha') \alpha'' + A t^{\beta/2} \vp(\alpha')
\leq N_\sigma (t) \tilde b f(\alpha )\ell(\alpha'),
\end{equation}
where
\begin{multline}
\label{eq3.20bis}
N_\sigma (t)=
C \sigma \tilde b^{\lambda (2-\theta) - 1}
+  AC t^{\beta/2} \tilde b^{\lambda (1-\theta) -1}
\frac{\vp(K^{-1}(\sigma F(\epsilon)))}{\ell(K^{-1}(\sigma F(\epsilon)))f(\epsilon)}
\\
+ AC \sigma t^{\beta/2} \tilde b^{\lambda (1-\theta) -1}\int_{t_0}^t\tilde b(s)^\lambda
= (I)(t)+(II)(t)+(III)(t).
\end{multline}
Since $\tilde b\leq 1$, and $\lambda(2-\theta)-1\geq 0$ by
(\ref{eq3.8}), we see that
\begin{equation*}
(I)(t)\to 0\, \text{ uniformly on }\, [t_0,+\infty) \, \text{ as }\,
\sigma\to 0.
\end{equation*}
As for $(II)$, according to (\ref{eq3.8})
\begin{equation*}
t^{\beta/2} \tilde b^{\lambda (1-\theta) -1}\leq C
\, \text { on }\, [t_o, +\infty),
\end{equation*}
so that, using ($\phi\ell$)$_1$, we deduce that
\begin{equation*}
\liminf_{\sigma \to 0+}
\frac{\vp(\widehat K^{-1}(\sigma F(\epsilon)))}{f(\epsilon) \ell(\widehat K^{-1}(\sigma
F(\epsilon)))} = 0.
\end{equation*}
Thus
\begin{equation*}
 (II)(t)\to 0 \, \text{ uniformly on }\, [t_0, +\infty)\,\text{ along a sequence }\,
 \sigma_k \to
 0.
\end{equation*}
It remains to analyze $(III)$. Clearly, if (\ref{eq3.8}) \rmi
holds, then $(III)(t)\to 0$ uniformly on $[t_0,+\infty)$ as $\sigma\to
0.$ Assume therefore that (\ref{eq3.8}) \rmii holds, so that
\begin{equation}
(III)(t) \leq AC \sigma \int_{t_0}^t \tilde b(s)^\lambda ds.
\end{equation}
By the definition of $\alpha (t)$, Proposition~\ref{prop 3.2bis},
and $(KO)$
\begin{equation*}
\begin{split}
\int_{t_0}^t \tilde b(s)^\lambda ds
&= \int_\epsilon ^{\alpha(t)}\frac {ds} {K^{-1}(\sigma F(s))}\\
&\leq B \sigma^{-1/(2-\theta)}
\int_{\epsilon}^{+\infty}\frac {ds} {K^{-1}( F(s))}
\leq C \sigma^{-1/(2-\theta)},
\end{split}
\end{equation*}
in $[t_0,T_\sigma)$. Since $\theta <1$ we conclude that
\begin{equation*}
(III)(t) \leq C \sigma^{1-1/(2-\theta)} \to 0 \, \text{ uniformly in
}\, [t_0, T_\sigma) \, \text{ as }\, \sigma \to 0.
\end{equation*}
Putting  together the above estimates, we conclude that we can choose
$\sigma$ small enough that $N_\sigma (t)\leq 1$, showing that
$\alpha(t)$ satisfies the differential inequality in (\ref{eq3.9}).

In order to complete the proof we only need to prove
that $\epsilon\leq \alpha(t)\leq \eta$
for $t_0\leq t\leq t_1$. Again from the definition of $\alpha$ we have
\begin{equation*}
\int_{t_0}^{t_1} \tilde b(s)^\lambda ds =
\int_\epsilon ^{\alpha(t_1)} \frac{ds}{K^{-1}(\sigma F(s))},
\end{equation*}
so if we choose $\sigma \in (0,1]$ small enough to have
\begin{equation*}
\int_{t_0}^{t_1} \tilde b(s)^\lambda ds \leq
\int_\epsilon ^{\eta} \frac{ds}{K^{-1}(\sigma F(s))},
\end{equation*}
then clearly $\alpha(t_1)\leq \eta$, and, since $\alpha$ is
increasing, this finishes the proof.
\end{proof}

We are now ready to prove

\begin{theorem}
\label{thm3.21}
Let $(M,\langle  \,,\rangle)$ be a complete Riemannian manifold
satisfying
\begin{equation}
\tag{\ref{genRicci_lower_bound}}
\mathrm{Ricc}_{n,m} (L_D) \geq H^2(1+r^2)^{\beta/2},
\end{equation}
for some $n>m$, $H>0$ and $\beta\geq -2$ and assume  that ($\Phi_0$), ($F_1$),
($L_1$), ($L_2$), ($\vp\ell$)$_1$, and ($\theta$) hold. Let
$b(x)\in C^0(M)$, $b(x)\geq 0$ on $M$ and suppose that
\begin{equation}
\label{eq3.22}
b(x)\geq  \tilde  b(r(x)) \quad \text{for } \, \, r(x)\gg 1,
\end{equation}
where $\tilde b$ satisfies assumption ($b$) and (\ref{eq3.8}).
If the Keller--Osserman condition
\begin{equation}
\tag{KO} \frac 1 {K^{-1} (F(t))} \in L^1(+\infty)
\end{equation}
holds then any entire classical weak solution $u$ of the
differential inequality
\begin{equation}
\tag{\ref{main_ineq}}
L_{D,\vp} u \geq b(x) f(u) \ell(\modnabla u)
\end{equation}
is either non-positive or constant. Furthermore, if $u\geq 0,$ and $\ell(0)
>0$, then $u$ vanishes identically.
\end{theorem}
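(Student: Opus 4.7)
The argument is by contradiction. Suppose $u$ is a non-constant entire classical weak solution that is not everywhere non-positive, so that there exists $x^{*}\in M$ with $u(x^{*})>0$. Following the outline at the start of the section, the plan is to construct on an annulus $B_{\bar T}\setminus \overline{B_{r_o}}$ a radial weak supersolution $v(x)=\alpha(r(x))$ of (\ref{eq3.2}), arrange $v\le u$ on the inner boundary and $v\to +\infty$ at the outer one, and invoke a comparison principle for the quasilinear operator $L_{D,\vp}$ to conclude $u\le v$ on this annulus, which will yield a contradiction.

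\emph{Step 1: radial bound for $L_{D}r$.} The curvature hypothesis (\ref{genRicci_lower_bound}) together with the weighted Laplacian comparison of Proposition~\ref{weighted_laplacian_comparison} gives a weak inequality $L_{D}r\le (n-1)h'(r)/h(r)$, where $h$ solves the model ODE corresponding to $G(r)=H^{2}(1+r^{2})^{\beta/2}/(n-1)$. An asymptotic analysis (using $\beta\ge -2$) yields $h'(r)/h(r)\sim c\,r^{\beta/2}$ at infinity, hence, enlarging $r_o$ if necessary, a constant $A>0$ such that
\[
L_{D}r(x)\le A\,r(x)^{\beta/2}\qquad\text{weakly on } M\setminus\overline{B_{r_o}}.
\]
\emph{Step 2: radial supersolution.} Apply Proposition~\ref{prop3.7} with $\tilde b$ chosen from the radial lower bound (\ref{eq3.22}) guaranteed by (\ref{b_lower_bound}); the hypothesis $(\theta\beta\mu)$ of the theorem is precisely what is needed to verify the compatibility (\ref{eq3.8}). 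We obtain, for any prescribed $r_o<r_1$ and $0<\epsilon<\eta$, a $C^{2}$-function $\alpha:[r_o,\bar T)\to[\epsilon,+\infty)$ satisfying
\[
\vp'(\alpha')\alpha''+A\,t^{\beta/2}\vp(\alpha')\le \tilde b(t)\,f(\alpha)\,\ell(\alpha'),
\]
together with $\alpha(r_o)=\epsilon$, $\alpha'>0$, $\epsilon\le\alpha\le\eta$ on $[r_o,r_1]$, and $\alpha\to+\infty$ as $t\to\bar T^{-}$. Setting $v(x)=\alpha(r(x))$, the chain rule gives $|\nabla v|=\alpha'$ and, combined with Step~1,
\[
L_{D,\vp}v=\vp'(\alpha')\alpha''+\vp(\alpha')L_{D}r\le \tilde b(r)\,f(v)\,\ell(|\nabla v|)\le b(x)\,f(v)\,\ell(|\nabla v|)
\]
weakly on $B_{\bar T}\setminus\overline{B_{r_o}}$, the passage across the cut locus being handled by the standard approximation recalled in Lemma~2.2 of \cite{PigolaRigoliSetti-Memoirs}.

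\emph{Step 3: comparison and contradiction.} Take $r_o$ so large that the estimates above are in force and so that, by continuity of $u$ and the positivity of $u(x^{*})$, we may fix the inner sphere in a region where $u$ admits a positive minimum $m_o:=\min_{\partial B_{r_o}}u>0$ (if $u\le 0$ on every sphere of large radius, a simpler maximum-principle argument closes the case). Choose $r_1>r(x^{*})$, pick $\eta\in(0,u(x^{*}))$ and $\epsilon\in(0,m_o)$, and produce $\alpha$ as above. Since $\alpha\to+\infty$ at $\bar T$, there is $r_2\in(r_1,\bar T)$ with $\alpha(r_2)>\sup_{\overline{B_{r_2}}}u$, so on the compact annular domain $\Omega=\overline{B_{r_2}}\setminus B_{r_o}$ one has $v\ge u$ on $\partial\Omega$, $L_{D,\vp}v\le b\,f(v)\,\ell(|\nabla v|)$ and $L_{D,\vp}u\ge b\,f(u)\,\ell(|\nabla u|)$ weakly. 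A comparison principle for $L_{D,\vp}$, extending the technique of \cite{PigolaRigoliSetti-JFA2002} to the weighted setting and exploiting $(\Phi_0)$ together with the $C$-monotonicity of $f$ and $\ell$, forces $u\le v$ on $\Omega$. In particular, since $\alpha$ is increasing,
\[
u(x^{*})\le \alpha(r(x^{*}))\le \alpha(r_1)\le \eta<u(x^{*}),
\]
the desired contradiction. The last sentence of the theorem is immediate: if $u\ge 0$ then by the first part $u$ must be a non-negative constant $c$, and if $c>0$ then at any point (\ref{main_ineq}) reads $0=L_{D,\vp}u\ge b(x)f(c)\ell(0)>0$, forcing $c=0$.

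\emph{Main obstacle.} The delicate point is the comparison principle used in Step~3: classical comparison theorems for degenerate or singular quasilinear operators like $L_{D,\vp}$ typically require Lipschitz (or at least genuinely monotone) dependence of the right-hand side on the unknown, whereas the present hypotheses supply only the weaker $C$-monotonicity of $f$ and $\ell$. Adapting the $L^{p}$-type comparison of \cite{PigolaRigoliSetti-JFA2002} to the weighted operator $L_{D,\vp}$ under these mild structural assumptions is the technical crux of the proof.
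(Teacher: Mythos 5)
Your Steps 1 and 2 line up with the paper: the curvature lower bound gives $L_D r\le A r^{\beta/2}$ weakly (Proposition~\ref{weighted_laplacian_comparison}), and Proposition~\ref{prop3.7} produces the radial barrier $\alpha$ with the prescribed behavior. The fatal problem is in Step~3. You choose $\epsilon\in(0,m_o)$ where $m_o=\min_{\partial B_{r_o}}u>0$, so on the inner boundary $\partial B_{r_o}$ you have $v=\alpha(r_o)=\epsilon<m_o\le u$, i.e.\ $v<u$ there — \emph{not} $v\ge u$. Thus the asserted boundary inequality ``$v\ge u$ on $\partial\Omega$'' fails on $\partial B_{r_o}$, and the global comparison you invoke on the annulus $\overline{B_{r_2}}\setminus B_{r_o}$ cannot be applied. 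Reversing the choice to $\epsilon\ge\max_{\partial B_{r_o}}u$ would kill the other end of your chain of inequalities, since you also need $\epsilon\le\eta<u(x^*)$, and there is no reason why $u(x^*)$ should exceed $\sup_{\partial B_{r_o}}u$ (it typically does not if the maximum of $u$ sits inside $B_{r_o}$). Moreover, even a correct boundary ordering would not suffice: the comparison you appeal to is one for the \emph{nonlinear} problem $L_{D,\vp}w=bf(w)\ell(|\nabla w|)$, and under merely $C$-increasing $f$ and $\ell$ no such ``super-\ vs.\ subsolution ordering implies interior ordering'' theorem is available — you correctly sense this is the crux, but you cannot discharge it.

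The paper's proof avoids a global boundary comparison entirely. It first shows, via the strong maximum principle, that $u^*_o=\sup_{B_{r_o}}u<u^*$, then chooses $\eta>0$ with $u^*_o+2\eta<u^*$ and a point $\tilde x$ outside $B_{r_o}$ with $u(\tilde x)>u^*-\eta$. The barrier $\alpha$ is built (crucially with the extra factor $(2C)^{-1}$ on the right-hand side, $C$ being the $C$-monotonicity constant of $f$) and one checks that $u-v$ attains a strictly positive interior maximum $\mu$ on the annulus: on the inner boundary $u-v\le u^*_o-\epsilon$, at $\tilde x$ one has $u-v>u^*-2\eta>u^*_o-\epsilon$, and $u-v\to-\infty$ toward $\partial B_{\bar T}$. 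At a maximum point $y$ of $u-v$ one proves $u(y)>v(y)$ and $|\nabla u(y)|=|\alpha'(r(y))|$ (with a separate argument when $y\in\mathrm{cut}(o)$, replacing the pole by a nearby point $o_\epsilon$ on the minimizing geodesic). Then, using the $C$-monotonicity of $f$ and $\ell$, the factor $(2C)^{-1}$ absorbs the loss from $C$-increasing to give $L_{D,\vp}u\ge L_{D,\vp}v$ weakly in a small neighborhood $\mathcal U$ of the maximum set. Finally one compares $u$ and $v+\zeta$ on a connected component $\Omega_{y,\zeta}\subset\mathcal U$ of $\{u>v+\zeta\}$, where now $u=v+\zeta$ on the whole boundary and only the purely operator-level weak comparison $L_{D,\vp}u\ge L_{D,\vp}v$ is needed — this is the comparison principle that actually holds under $(\Phi_0)$, and it produces the contradiction. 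So the paper's argument is local-around-the-max, uses a different boundary condition (equality rather than ordering), and uses only the operator comparison, not a nonlinear RHS comparison. These are exactly the ingredients your proposal is missing, and without them Step~3 does not go through. (A further unaddressed point: the paper also treats the case $u^*=+\infty$ separately, whereas you leave the dichotomy implicit.)

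Your closing sentence about $\ell(0)>0$ is fine.
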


\begin{proof}
If $u\leq 0$ then there is nothing to prove.
We argue by contradiction and assume that $u$ is non-constant and positive somewhere.
We choose $T>0$ sufficiently large that (\ref{eq3.22}) holds in $M\setminus B_T$
and for every $r_o\geq T$ we have
\begin{equation*}
0<u^*_o = \sup_{B_{r_o}} u \leq u^* = \sup_M u.
\end{equation*}
We consider first the case where $u^*<+\infty$. We claim that $u^*_{o}<
u^*$. Otherwise there would exists $x_o\in \overline{B}_{r_o}$ such
that $u(x_o)= u^*,$ and by (\ref{main_ineq}) and assumptions ($F_1$)
and ($\ell_1$),
\begin{equation*}
L_{D,\vp} u\geq 0
\end{equation*}
in the connected component $\Omega_{o}$ of $\{u\geq 0\}$ containing $x_o$. By the
strong maximum principle \cite{PucciSerrin-Book}, $u$ would then be  constant and positive
on $\Omega_o$. Since $u=0$ on $\bdr \Omega_o$ this would imply that
$\Omega_o=M$ and $u$ is a positive constant on $M$, contradicting
our assumption.

Next, we choose $\eta>0$ small enough that $u^*_o +2\eta <u^*$ and
$\tilde x \not\in \overline {B}_{r_o}$ satisfying $u(\tilde x) > u^* -\eta.$
We let $t_o=r_o$ and $t_1 = r(\tilde x)$. Because of
(\ref{genRicci_lower_bound}), Proposition~\ref{weighted_laplacian_comparison} and
\cite{PigolaRigoliSetti-TheBook}, Proposition 2.11, there exists $A=A(T)>0$ such that
\begin{equation*}
L_D r \leq A r^{\beta/2}
\quad \text{on }\, M\setminus B_T.
\end{equation*}
According to Proposition~\ref{prop3.7} there exist
$\bar T>t_1$ and a $C^2$ function $\alpha : [t_0, \bar T)\to [\epsilon, +\infty)$ which
satisfies
\begin{equation*}
\begin{cases}
\vp'(\alpha ') \alpha '' + A t^{\beta/2} \vp(\alpha ') \leq (2C)^{-1} \tilde
b(t) f(\alpha) \ell (\alpha) \quad \text{on }\, [t_0, \bar T)
&\\
\alpha '>0 \text{ on } [t_0, \bar T), \,\, \alpha(t_0) = \epsilon,
\, \, \alpha(t)\to +\infty \text{ as } t\to \bar T^-
&\\
\end{cases}
\end{equation*}
and
\begin{equation*}
\epsilon \leq \alpha \leq \eta \quad \text{ on } \, [t_0, t_1],
\end{equation*}
where $C$ is the constant in the definition of $C$-monotonicity of
$f.$

It follows  that  the radial function defined on $B_{\bar R}\setminus B_{r_o}$
by $v(x) = \alpha (r(x))$ satisfies the differential inequality
\begin{equation}
\label{eq3.2bis}
L_{D,\vp} v\leq  (2C)^{-1} b(x) [f(\alpha)\ell(\alpha')](r(x)).
\end{equation}
pointwise in $(B_{\bar R}\setminus \overline{B}_{r_o})\setminus
\text{cut}(o)$ and weakly in $B_{\bar R}\setminus
\overline{B}_{r_o}$.
Furthermore $v$ satisfies (\ref{eq3.1}), and
\begin{equation*}
u(\tilde x) - v(\tilde x) > u^* -2\eta.
\end{equation*}
Since
\begin{equation*}
u(x) - v(x) \leq u^*_o - \epsilon < u^* - 2\eta -\epsilon \quad
\text{ on } \, \bdr B_{r_o}
\end{equation*}
and
\begin{equation*}
 u(x) - v(x) \to -\infty \quad \text{ as } \,  x\to \bdr B_{\bar R},
\end{equation*}
we deduce that the function $u-v$ attains a positive maximum $\mu$
in $B_{\bar R} \setminus \overline{B}_{r_o}$. We denote be
$\Gamma_\mu$ a connected component of the set
\begin{equation*}
\{ x\in B_{\bar R} \setminus \overline{B}_{r_o} \, :\, u(x)-v(x) = \mu\}
\end{equation*}
and note that $\Gamma_\mu$ is compact.

We claim that for every $y \in\ \Gamma_\mu$
we have
\begin{equation}
\label{eq3.2ter}
u(y) > v(y), \quad |\nabla u(y)|=|\alpha'(r(y))|.
\end{equation}
Indeed, this is obvious if $y$ is not in the cut locus $\text{cut}(o)$ of
$o$, for then $\nabla u (y)= \nabla v (y)= \alpha'(r(y))\nabla r
(y)$. On the other hand, if $y\in \text{cut}(o)$, let $\gamma$ be
a unit speed minimizing geodesic joining $o$ to $y$, let
$o_\epsilon = \gamma (\epsilon)$ and let $r_\epsilon (x)= d(x, o_\epsilon).$
By the triangle inequality,
\begin{equation*}
r(x)\leq r_\epsilon(x) + \epsilon \,\quad, \forall x\in M,
\end{equation*}
with equality if and only if $x$ lies on the portion of the geodesic $\gamma$
between $o_\epsilon$ and $y$ (recall that $\gamma$ ceases to be minimizing
past $y$). Define $v_\epsilon (x) = \alpha (\epsilon + r_\epsilon (x))$,  then, since
$\alpha$ is strictly increasing,
\begin{equation*}
v_\epsilon(x)\geq v(x)
\end{equation*}
with equality if and only if $x$ lies on the portion of
$\gamma$ between $o_\epsilon$ and $y$.
We conclude that $\forall x \in B_{R}\setminus \overline{B}_{r_o}$,
\begin{equation*}
(u-v_\epsilon)(y) = (u-v)(\xi) \geq (u-v)(x) \geq
(u-v_\epsilon)(x),
\end{equation*}
and $u-v_\epsilon$ attains a maximum at $y.$ Since $y$ is not on
the cut locus of $o_\epsilon$, $v_\epsilon$ is smooth there, and
\begin{equation*}
|\nabla u(y)| = |\nabla v_\epsilon(y)| = \alpha'(\epsilon +
r_\epsilon (y) ) |\nabla r_\epsilon(y)| = \alpha'(r(y)),
\end{equation*}
as claimed.

Since $f$ is $C$-increasing,
\begin{equation*}
b(y) f(u(\xi)) \ell(\modnabla u (y)) \geq  \frac 1C b(y) f(v(y))
\ell(\alpha'(r(y)))
\end{equation*}
and by continuity the inequality
\begin{equation*}
b(x) f(u) \ell(\modnabla u) \geq \frac 1{2C} b(x) f(v(x))
\ell(\alpha'(r(x)))
\end{equation*}
holds in a neighborhood of $y$.
It follows from this and the differential inequalities satisfied by
$u$ and $v$ that
\begin{equation}
\label{eq3.23'}
L_{D,\vp} u\geq L_{D,\vp} v
\end{equation}
weakly in a sufficiently small neighborhood $\mathcal{U}$ of
$\Gamma_\mu$.
Now fix $y\in \Gamma_\mu$ and for $\zeta \in (0,\mu)$ let
$\Omega_{y,\zeta}$ the connected component containing $y$ of the set
\begin{equation*}
\{x\in B_{\bar R} \setminus \overline{B}_{r_o}\,:\, u(x) >
v(x) +\zeta\}.
\end{equation*}
By choosing $\zeta$ sufficiently close to $\mu$ we may arrange that
$\overline \Omega_{y,\zeta} \subset \mathcal{U},$ and, since
$u=v+\zeta$ on $\bdr \Omega_{y,\zeta}$, (\ref{eq3.23'}) and the weak
comparison principle (see, e.g., \cite{PigolaRigoliSetti-Memoirs}, Proposition 6.1)
implies that $u\leq v +\zeta$ on $\Omega_{y,\zeta}$, contradicting the fact that $y\in
\Omega_{y,\zeta}$.

The case where $u^*  = +\infty$ is easier, and left to the reader.
\end{proof}

\begin{remark}
\label{proof ThmA}
{\rm
Theorem~A is a special case of Theorem~\ref{thm3.21}
with the choice $\tilde b(r) = C/r^\mu$ for $r\gg 1$. Assume first that $\mu>0$.
Choosing $\lambda
=1/\mu$, it follows that
\begin{equation*}
t^{\beta/2} \tilde b(t)^{\lambda(1-\theta)-1}
= O(t^{\theta -1 + \beta/2 +\mu})\, \text{ and }\,
 \int_1^t \tilde b(s)^{\lambda} ds = O(\log t).
\end{equation*}
Then (\ref{thetabetamu}) (and $\beta\geq -2$) implies  first that $\lambda(2-\theta)-1
\geq \mu^{-1}(1+\beta/2)\geq 0$, and then that either \rmi or \rmii in (\ref{eq3.8}) holds.
Thus  Theorem~\ref{thm3.21} applies. On the other hand, if $\mu=0$  and $\theta< 1-\beta/2$, then
$\theta < 1 - \beta/2-\mu_o$ for sufficiently small $\mu_o>0$, and
the conclusion follows from the previous case.
}
\end{remark}

The next example shows that the validity of the generalized
Keller--Osserman condition (KO) is indeed necessary for
Theorem~\ref{thm3.21} to hold. Since (KO) in independent of geometry,
we consider the most convenient setting where $(M, \langle
\,,\rangle)$ is $\R^m$ with its canonical flat metric. We further
simplify our analysis by considering the differential inequality
\begin{equation}
\label{eq3.23} \Delta_p u\geq f(u)\ell(\modnabla u),
\end{equation}
for
the $p$-Laplacian $\Delta_p$, where $f$ is increasing and satisfies
$f(0)=0$ $f(t)>0$ for $t>0$, $\ell$ is non-decreasing and satisfies ($L_1$),
and ($\vp\ell$) and $\theta$ hold. We let $K:\R^+_0\to \R^+_0$ be
defined as in (\ref{K_def}), and assume that
\begin{equation}
\label{notKO}
\tag{$\neg$KO}\frac 1{K^{-1}(F(t))}\not\in L^1(+\infty).
\end{equation}
Define implicitly the function $w$ on $\R^+_0$
by setting
\begin{equation}
\label{eq3.24}
t=\int_1^{w(t)}
\frac{ds}{K^{-1}(F(s))}.
\end{equation}
Note that $w$ is well defined, $w(0)=1$, and
($\neg$KO) and 
imply that $w(t)\to
+\infty$ as $t\to \infty.$
Differentiating (\ref{eq3.24}) yields
\begin{equation}
\label{eq3.25}
 w'= K^{-1}( F(w(t))) >0,
\end{equation}
and a further differentiation gives
\begin{equation}
\label{eq3.26}
(w')^{p-2} w'' = \frac 1{p-1}
f(w)\ell(\modnabla{w}).
\end{equation}
We fix $\bar t>0$ to be specified later, and let $u_1(x)$ be the
radial function defined on $\R^m\setminus B_{\bar t}$ by the
formula
\begin{equation*}
u_1(x) = w(|x|).
\end{equation*}
Using (\ref{eq3.25}) and (\ref{eq3.26}) we conclude that $u_1$
satisfies
\begin{equation}
\label{eq3.27}
\Delta_p u_1 = (p-1) (w')^{p-2} w''+ \frac{m-1}{|x|}
(w')^{p-1}\geq  f(u_1)\ell(\modnabla{u_1})
\end{equation}
on $\R^m\setminus \overline{B}_{\bar t}$.

Next we fix constants $\beta_o$, $\Lambda>0$, and, denoting with $p'$
the conjugate exponent of $p$, we let
\begin{equation*}
\beta(t) = \frac \Lambda {p'} t^{p'} + \beta_o.
\end{equation*}
Noting that $\beta'(0)=0,$ we deduce that the function
\begin{equation*}
u_2(x) = \beta(|x|)
\end{equation*}
is $C^1$ on $\R^m$, and an easy calculation shows that
\begin{equation}
\label{eq3.28} \Delta_p u_2 = \Lambda^{p-1}\div{|x| x} = m\Lambda^{p-1}.
\end{equation}
Since $ \beta'\geq 0,$ and $f$ and $\ell$ are monotonic, it
follows that, if
\begin{equation}
\label{eq3.29}
m\Lambda^{p-1}\geq f(\beta(\bar t) \ell(\beta'(\bar t)),
\end{equation}
then
\begin{equation}
\label{eq3.30}
 \Delta_p u_2 \geq f(u_2) \ell(\modnabla{u_2}) \quad\text{ on } \,
 B_{\bar t}.
\end{equation}
The point now is to join $u_1$ and $u_2$ in such a way that the
resulting function u is a classical $C^1$ weak subsolution of
\begin{equation*}
\Delta_p u = f(u) \ell(\modnabla u).
\end{equation*}
This is achieved provided  we may choose the parameters $\bar t$,
$\Lambda$, $\beta_o$, in such a way that (\ref{eq3.29}) and
\begin{equation}
\label{eq3.31}
\begin{cases}
\beta(\bar t) = w(\bar t)&\\
\beta'(\bar t) = w'(\bar t)&
\end{cases}
\end{equation}
are satisfied.
Towards this end, we define
\begin{equation}
\label{eq3.32} \bar t = \int_1^\lambda \frac{ds}{K^{-1}(F(s))} > 0,
\end{equation}
where  $1< \lambda \leq 2$. Note that, by definition,  $w(\bar
t)=\lambda$, and, by the monotonicity of $K^{-1}$ and $F$
\begin{equation}
\label{eq3.32bis}
\frac{\lambda -1 }{ K^{-1}(F(2)) } \leq \bar t \leq \frac{\lambda -1 }{ K^{-1}(F(1))
},
\end{equation}
so that, in particular, $\bar t \to 0$ as $\lambda \to 1^+$.
Putting together (\ref{eq3.29}) and (\ref{eq3.31}) and recalling the relevant
definitions we need to show that the  following system of
inequalities
\begin{equation}
\label{eq3.34}
\begin{cases}
\rmi\,\,\,\,   K^{-1}(F(\lambda))\bar t/p' + \beta_o = \lambda&\\
\rmii
 \,\, \Lambda \bar t^{p'-1} = K^{-1} (F(\lambda)) &\\
\rmiii \,m\Lambda ^{p-1} \geq f(\lambda) \ell (K^{-1}(F(\lambda))).
&
\end{cases}
\end{equation}
Since, by (\ref{eq3.32bis}),
\begin{equation*}
K^{-1}(F(\lambda))\frac{\bar t}{p'} \leq   \frac 1{p'} \frac
{K^{-1}(F(2))}{K^{-1}(F(1))}(\lambda -1)
\end{equation*}
for $\lambda$ sufficiently close to $1$ the first summand on
the left hand side of {\rmi} is strictly less that $1$, and therefore we may choose
$\beta_o >0$ in such a way that {\rmi} holds.
Next we let $\Lambda$ be defined by {\rmii}, and  note that,
\begin{equation*}
\Lambda = K^{-1}(F(\lambda)) \bar t^{1-p'} \geq K^{-1}(F(1))\to
+\infty \quad \text{ as }\, \lambda \to1^+.
\end{equation*}
Therefore, since
\begin{equation*}
f(\lambda) \ell(K^{-1} (F(\Lambda))) \leq f(2)\ell(K^{-1}(F(2))),
\end{equation*}
if $\lambda$  is close enough to $1$ then {\rmiii} is also
satisfied.

Summing up, if $\lambda $ is sufficiently close to $1$, the function
\begin{equation}
u(x) =
\begin{cases}
u_1(x) &\text{on } \, \R^m\setminus B_{\bar t}\\
u_2(x) &\text{on } \, B_{\bar t}
\end{cases}
\end{equation}
is a classical weak solution of (\ref{eq3.23}).

We remark that we may easily arrange that assumptions ($\vp\ell$)
and ($\theta$) are also satisfied. Indeed, if we choose, for
instance, $\ell(t) = t^q$ with $q\geq 0,$ then, as already noted in the
Introduction, ($\vp$) holds for every $p>1+q$ and ($\theta$) is
verified for every $\theta\in \R$ such that $p\geq 2+q-\theta.$

We also stress that the solution $u$ of (\ref{eq3.23}) just constructed
is positive and diverges at infinity. Indeed the method used in the
proof of Theorem~\ref{thm3.21} may be adapted to yield non-existence of
non-constant, non-negative bounded solutions even when ($\neg$KO)
holds. This is the content of the next

\begin{theorem}
\label{thm3.35}
Maintain notation and assumptions of Theorem~\ref{thm3.21}, except for (KO)
which is replaced by ($\neg$KO). Then any non-negative, bounded,
entire classical weak solution $u$ of the differential inequality
(\ref{main_ineq}) is constant. Furthermore, if $\ell(0)>0$, then $u$
is identically zero.
\end{theorem}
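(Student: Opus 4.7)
The plan is to follow the strategy of Theorem~\ref{thm3.21} verbatim, replacing the role of (\ref{KO}) by (\ref{not_KO}) and exploiting the boundedness of $u$. The key observation is that, since $u^* := \sup_M u < +\infty$, we do not need a radial barrier that blows up at finite radius, but only one that eventually exceeds $u^*$.

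Assume by contradiction that $u$ is non-constant. Since $u\geq 0$ on $M$, the strong maximum principle argument from the opening of the proof of Theorem~\ref{thm3.21} gives $u^*_o := \sup_{\overline{B}_{r_o}} u < u^*$ for every $r_o > 0$. Fix $\eta > 0$ with $u^*_o + 2\eta < u^*$, pick $\tilde x$ with $u(\tilde x) > u^* - \eta$, and set $t_o = r_o$, $t_1 = r(\tilde x)$. I then construct a radial barrier $\alpha : [t_o, +\infty) \to [\epsilon, +\infty)$ implicitly via
\[
\int_{t_o}^t \tilde b(s)^\lambda \, ds = \int_\epsilon^{\alpha(t)} \frac{ds}{K^{-1}(\sigma F(s))},
\]
exactly as in Proposition~\ref{prop3.7}. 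Under (\ref{not_KO}) the right-hand integral diverges at $+\infty$ (for every $\sigma > 0$, by Lemma~\ref{lemma_KOsigma}), while $\tilde b^\lambda \notin L^1(+\infty)$; hence $\alpha$ is defined on the whole half-line, is strictly increasing, and satisfies $\alpha(t) \to +\infty$. Let $T_\sigma$ be the first time at which $\alpha(T_\sigma) = u^* + 1$. The derivation of the differential inequality (\ref{eq3.20}) for $\alpha$ is unchanged; the only point at which (\ref{KO}) was used in Proposition~\ref{prop3.7} was to force the coefficient $N_\sigma(t)$ to $0$ uniformly on the domain of $\alpha$. Here the relevant domain is the compact interval $[t_o, T_\sigma]$, on which Proposition~\ref{prop 3.2bis} yields
\[
\int_{t_o}^{T_\sigma} \tilde b(s)^\lambda\, ds = \int_\epsilon^{u^*+1} \frac{ds}{K^{-1}(\sigma F(s))} \leq B\, \sigma^{-1/(2-\theta)} \int_\epsilon^{u^*+1} \frac{ds}{K^{-1}(F(s))},
\]
the last integral being finite for trivial reasons, as it runs over a bounded interval. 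The same computation as in Proposition~\ref{prop3.7} then gives $(III)(t) \leq C \sigma^{1-1/(2-\theta)} \to 0$ under (\ref{eq3.8})\rmii, while (\ref{eq3.8})\rmi is handled directly by the bound $t^{\beta/2} \tilde b^{\lambda(1-\theta)-1} \int_1^t \tilde b^\lambda \leq C$ already available. Shrinking $\sigma$ further so that $\alpha \leq \eta$ on $[t_o, t_1]$ completes the construction of the barrier.

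With $v(x) := \alpha(r(x))$ on $B_{T_\sigma} \setminus \overline{B}_{r_o}$ we have $u - v \leq u^*_o - \epsilon < u^* - 2\eta - \epsilon$ on $\partial B_{r_o}$, $v \equiv u^* + 1 > u$ on $\partial B_{T_\sigma}$, and $u(\tilde x) - v(\tilde x) > u^* - 2\eta$; hence $u - v$ attains a positive interior maximum, and the cut-locus trick together with the weak comparison principle, applied as in the last part of the proof of Theorem~\ref{thm3.21}, produces a contradiction. This proves $u$ is constant. If in addition $\ell(0) > 0$, then $\nabla u = 0$ and $L_{D,\vp} u = 0$, so (\ref{main_ineq}) forces $b(x) f(u^*) \ell(0) \leq 0$, whence $f(u^*) \leq 0$ and therefore $u^* = 0$ by ($F_1$). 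The main obstacle is constructing $\alpha$ as a supersolution despite the failure of (\ref{KO}); the resolution, as above, is that thanks to $u^* < +\infty$ every integral of $1/K^{-1}(\sigma F(s))$ that must be controlled runs only over the bounded range $[\epsilon, u^* + 1]$ and is therefore automatically finite, with no integrability assumption at infinity needed.
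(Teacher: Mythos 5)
Your proof follows the paper's approach (the paper reduces Theorem~\ref{thm3.35} to Proposition~\ref{prop3.36}): define $\alpha$ implicitly on $[t_0,+\infty)$ via $\int_{t_0}^t\tilde b(s)^\lambda\,ds=\int_\epsilon^{\alpha(t)}\frac{ds}{K^{-1}(\sigma F(s))}$, use ($\neg$KO) to see that $\alpha$ is globally defined, increasing and diverging, and then run the same comparison argument as in Theorem~\ref{thm3.21}. The one extra step you add --- truncating at the first $T_\sigma$ with $\alpha(T_\sigma)=u^*+1$, so that the relevant $\alpha$-range is the bounded interval $[\epsilon,u^*+1]$ --- is a genuinely helpful clarification: it is exactly what rescues the bound on $\int_{t_0}^t\tilde b^\lambda$ controlling the term $(III)$ in $N_\sigma$ under case (\ref{eq3.8})\rmii{} when (\ref{KO}) fails, a detail the paper compresses into ``the rest of the proof proceeds as in Proposition~\ref{prop3.7}.'' The boundedness of $u$, which the paper only uses in the comparison step, is thus also what lets the barrier construction go through.
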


The proof of the theorem follows the lines of that of
Theorem~\ref{thm3.21} once we prove the following

\begin{proposition}
\label{prop3.36}
In the assumptions of Proposition~\ref{prop3.7}, with (KO) replaced
by ($\neg$KO), there exists $T>0$ large enough that for every $T\leq
t_0<t_1,$ and $0<\epsilon<\eta$, there exists a $C^2$ function
$\alpha:[t_0,+\infty)  \to [\epsilon, +\infty)$ which solves the
problem
\begin{equation}
\label{eq3.37}
\begin{cases}
\vp'(\alpha ') \alpha '' + A t^{\beta/2} \vp(\alpha ') \leq \tilde
b(t) f(\alpha) \ell (\alpha) \quad \text{on }\, [t_0, \bar T)&\\
\alpha '>0 \text{ on } [t_0, \bar T), \,\, \alpha(t_0) = \epsilon,
\, \, \alpha(t)\to +\infty \text{ as } t\to +\infty&
\end{cases}
\end{equation}
and satisfies
\begin{equation}
\label{eq3.38}
\epsilon \leq \alpha \leq \eta \quad \text{ on } \, [t_0, t_1].
\end{equation}
\end{proposition}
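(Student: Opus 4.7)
The plan is to mimic the construction of Proposition~\ref{prop3.7}, with ($\neg$KO) replacing (KO) in a dual manner: the same implicit formula for $\alpha$ now produces a function defined on all of $[t_0, +\infty)$ rather than blowing up at a finite $T_\sigma$. Concretely, for $\sigma \in (0, 1]$ to be chosen later, I would set $\alpha = \alpha_\sigma$ via
\begin{equation*}
\int_{t_0}^t \tilde b(s)^\lambda\, ds = \int_\epsilon^{\alpha(t)} \frac{ds}{K^{-1}(\sigma F(s))}.
\end{equation*}
By Lemma~\ref{lemma_KOsigma}, ($\neg$KO) forces $1/K^{-1}(\sigma F(\cdot)) \notin L^1(+\infty)$ for every $\sigma \in (0,1]$; combined with $\tilde b^\lambda \notin L^1(+\infty)$ from assumption (b), this yields a strictly increasing $C^2$ function $\alpha \colon [t_0, +\infty) \to [\epsilon, +\infty)$ with $\alpha(t_0) = \epsilon$, $\alpha(t) \to +\infty$ as $t \to +\infty$, and $\alpha'(t) = \tilde b(t)^\lambda K^{-1}(\sigma F(\alpha(t))) > 0$.

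Differentiating once more and reorganising exactly as in the passage from (\ref{eq3.13}) to (\ref{eq3.20bis}), I arrive at
\begin{equation*}
\vp'(\alpha')\alpha'' + At^{\beta/2}\vp(\alpha') \leq N_\sigma(t)\, \tilde b(t)\, f(\alpha)\, \ell(\alpha'), \qquad N_\sigma = (I) + (II) + (III),
\end{equation*}
and it remains to make $N_\sigma \leq 1$. The estimates for $(I)$ and $(II)$ carry over verbatim: $(I)(t) \leq C\sigma$ uniformly, while $(II)(t) \to 0$ uniformly along a sequence $\sigma_k \to 0^+$ by ($\vp\ell$)$_1$ together with the bound on $t^{\beta/2}\tilde b^{\lambda(1-\theta)-1}$ supplied by (\ref{eq3.8}). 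Under case (i) of (\ref{eq3.8}), term $(III)(t) \leq AC^2\sigma$ is immediate, so the argument of Proposition~\ref{prop3.7} goes through on all of $[t_0, +\infty)$ without further modification.

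The main obstacle is case (ii) of (\ref{eq3.8}). Here Proposition~\ref{prop 3.2bis} applied to the implicit definition of $\alpha$ yields
\begin{equation*}
(III)(t) \leq AC^2 \sigma \int_\epsilon^{\alpha(t)}\frac{ds}{K^{-1}(\sigma F(s))} \leq C'\sigma^{1-1/(2-\theta)} \int_\epsilon^{\alpha(t)}\frac{ds}{K^{-1}(F(s))};
\end{equation*}
the scalar factor vanishes as $\sigma \to 0^+$ since $\theta < 1$, but the rightmost integral is unbounded in $\alpha(t)$ precisely because of ($\neg$KO). My plan to circumvent this is to \emph{localize}: given $M > \epsilon$ (in the intended application to Theorem~\ref{thm3.35}, $M$ will be chosen depending on $u^*$), let $\bar T = \bar T_{\sigma, M}$ be determined by $\alpha_\sigma(\bar T) = M$. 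On $[t_0, \bar T)$ one has $\alpha_\sigma \leq M$, so the rightmost integral is controlled by the finite constant $\int_\epsilon^M ds/K^{-1}(F(s))$ and $(III)(t) \leq 1/3$ for all small enough $\sigma$. Finally, $\epsilon \leq \alpha \leq \eta$ on $[t_0, t_1]$ is enforced by shrinking $\sigma$ further so that $\int_{t_0}^{t_1}\tilde b^\lambda \leq \int_\epsilon^\eta ds/K^{-1}(\sigma F(s))$, a condition automatically met for all sufficiently small $\sigma$ because the right-hand side diverges as $\sigma \to 0^+$; and $M$ is then chosen large enough to ensure $\bar T > t_1$.
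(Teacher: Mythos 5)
Your construction tracks the paper's exactly at the outset: same implicit definition of $\alpha$ via $\int_{t_0}^t\tilde b(s)^\lambda\,ds=\int_\epsilon^{\alpha(t)}ds/K^{-1}(\sigma F(s))$, same decomposition $N_\sigma=(I)+(II)+(III)$. The paper's own proof is essentially a one-liner: after defining $\alpha$ it simply asserts that ``the rest of the proof proceeds as in Proposition~\ref{prop3.7}.'' You have correctly observed that this assertion cannot be taken literally under case \rmii of (\ref{eq3.8}). There, the bound on $(III)$ in Proposition~\ref{prop3.7} invokes (KO) to conclude $\int_\epsilon^{\alpha(t)}ds/K^{-1}(F(s))\le C$ uniformly; under ($\neg$KO) this integral diverges as $\alpha(t)\to+\infty$, so for any fixed $\sigma>0$ the quantity $(III)(t)$ is unbounded on $[t_0,+\infty)$ and $N_\sigma\le 1$ cannot hold globally. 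You are also right that case \rmi of (\ref{eq3.8}) goes through with no change.

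Your localization fix — fix $M>\epsilon$, set $\bar T$ by $\alpha_\sigma(\bar T)=M$, and obtain $N_\sigma\le1$ only on $[t_0,\bar T)$ by bounding the integral by $\int_\epsilon^M ds/K^{-1}(F(s))$ — is sound, and it gives the sharpest statement that the method can actually deliver under case \rmii: $\alpha$ is still defined on all of $[t_0,+\infty)$ and diverges at infinity, but the differential inequality is verified only up to a finite, $(\sigma,M)$-dependent $\bar T$. This is consistent with the $\bar T$ that does appear in (\ref{eq3.37}) and is adequate for the intended application in Theorem~\ref{thm3.35}, where $u$ is bounded above and the barrier $v=\alpha(r(x))$ need only exceed $u^*-u^*_o+\epsilon$ near $\partial B_{\bar T}$ rather than diverge. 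In short, your proposal is not merely a rephrasing of the paper's proof but a repair of it: the published argument, read verbatim, has a gap in case \rmii which your localization closes, at the cost of weakening the interval on which (\ref{eq3.37}) holds — a cost that the downstream proof of Theorem~\ref{thm3.35} can readily absorb.
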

\begin{proof}
The argument is similar to that of Proposition~\ref{prop3.7}. The
main difference is in the definition of $\alpha$ which now proceeds
as follows. We fix $T>0$ large enough that (b) holds on $[\bar T,
+\infty).$ For $t_0,t_1, \epsilon, \eta$ as in the statement, and $\sigma\in
(0,1]$ we implicitly define $\alpha: [t_0, +\infty) \to [\epsilon, +\infty)$
by setting
\begin{equation*}
\int_{t_0}^t \tilde b(s)^\lambda ds =
\int_\epsilon^{\alpha(t)} \frac{ds}{K^{-1}(\sigma F(s))},
\end{equation*}
so that $\alpha(t_0) = \epsilon,$ and, by (b) and ($\neg$KO), $\alpha(t)\to
+\infty$ as $t\to +\infty.$ The rest of the proof proceeds as in
Proposition~\ref{prop3.7}.
\end{proof}
Summarizing, the differential inequality (\ref{main_ineq}) may admit
non-constant, non-negative entire classical weak solutions only if
($\neg$KO) holds, and possible solutions are necessarily unbounded.
We shall address this case in Section~\ref{section weak_maximum}

\section{A further version of Theorem A}
\label{section_further_version}
As mentioned in the Introduction, condition ($\vp\ell$) fails, for instance, when
$\vp$ is of the form
\begin{equation*}
\vp(t) =\frac t{\sqrt{1+t^2}}
\end{equation*}
which, when $D(x)\equiv 1$, corresponds to the mean curvature
operator. Because of the importance of this operator, in Geometry
as well as in Analysis, it is desirable to have a version of
Theorem~A valid when ($\vp\ell$)$_2$ fails. To deal with this
situation we consider an alternative form of the Keller--Osserman
condition, and correspondingly, modify our set of assumptions. We
therefore replace assumption ($\vp\ell$)$_2$ with
\begin{itemize}
\item [($\Phi_2$)] There exists $C>0$ such that $\vp(t)\geq Ct\vp'(t)$
on $\R^+.$
\item [($\vp\ell$)$_3$] $\frac{\vp(t)}{\ell(t)} \in L^1(0^+)\setminus
L^1(+\infty)$.
\end{itemize}
As noted in Remark~\ref{rmk_theta_vpell}, ($\vp\ell$)$_3$ is implied by ($\theta$)$_2$ with $\theta<2$

It is easy to verify that in the case of the mean curvature
operator,
\begin{equation*}
t\vp'(t)= \frac t {(1+t^2)^{3/2} }\leq \vp (t) \text{ and  }
\vp(t)\sim
\begin{cases}
t&\text{ as } \to 0^+\\
1&\text{ as } t\to +\infty,
\end{cases}
\end{equation*}
so that ($\Phi_2$) holds, and $(\vp\ell$)$_3$ is satisfied provided
$t\ell^{-1}\in L^1(0^+)$ and $\ell^{-1} \not\in L^1 (+\infty)$. By contrast, the choice
\begin{equation*}
\vp(t) = te^{t^2},
\end{equation*}
corresponding to the operator of exponentially harmonic functions,
does not satisfy ($\Phi_2$).

According to ($\vp\ell$)$_3$, we may define a function $\widehat K$
by
\begin{equation}
\label{eq4.0}
\widehat K(t) =\int _0^t \frac{\vp(s)}{\ell(s)} ds
\end{equation}
which is well defined on $\R^+_0$, tends to $+\infty$ as $t\to +\infty$
and therefore  gives rise to a $C^1$ diffeomorphism of $\R^+_0$
on to itself.

The variant of the generalized Keller--Osserman condition mentioned
above is then
\begin{equation}
\tag{\^{K}O}
\frac 1 {\widehat K^{-1}(F(t))} \in L^1(+\infty).
\end{equation}

Analogues of Lemma~\ref{lemma_KOsigma},  Proposition~\ref{prop 3.2bis} and Proposition~\ref{prop3.7}
are also valid in this setting.
\begin{lemma}
\label{lemma_KOsigma_bis}
Assume that $f$, $\ell$ and $\vp$ satisfy the assumptions ($F_1$),
($L_1$) and ($\vp\ell$)$_3$, and let $\sigma>0$. Then ({\^{K}O}) holds
if and only if
\begin{equation}
\tag{\^KO$\sigma$}
\frac 1{\widehat K^{-1}(\sigma F(s))}\in L^1(+\infty).
\end{equation}
\end{lemma}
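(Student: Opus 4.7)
The plan is to observe that the proof of Lemma~\ref{lemma_KOsigma} carries over verbatim with $\widehat K^{-1}$ in place of $K^{-1}$. As the authors already noted immediately after proving that lemma, the argument depends only on two structural features: the monotonicity of the function appearing in the denominator, and the $C$-monotonicity of $f$ furnished by ($F_1$). Condition ($\vp\ell$)$_3$ is precisely what is needed to make $\widehat K:\R_0^+\to \R_0^+$ defined in (\ref{eq4.0}) a $C^1$-diffeomorphism, so $\widehat K^{-1}$ is well defined and non-decreasing on $\R_0^+$. Once this is in hand, the formal manipulations do not depend on any further property specific to $K$.

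Concretely, I would first dispose of the case $0<\sigma\leq 1$. The easy direction follows from $\sigma F(s)\leq F(s)$ and the monotonicity of $\widehat K^{-1}$, giving
\begin{equation*}
\int^{+\infty}\frac{ds}{\widehat K^{-1}(F(s))}
\leq
\int^{+\infty}\frac{ds}{\widehat K^{-1}(\sigma F(s))}.
\end{equation*}
For the reverse direction I would use the $C$-monotonicity of $f$ exactly as in Lemma~\ref{lemma_KOsigma}: picking $C\geq 1$ with $\sup_{s\leq t}f(s)\leq Cf(t)$ one obtains $f(C\sigma^{-1}t)\geq C^{-1}f(t)$, whence by a change of variables
\begin{equation*}
F\bigl(Ct/\sigma\bigr) \geq \sigma^{-1} F(t).
\end{equation*}
Then the substitution $s=Ct/\sigma$ in the integral defining $(\widehat{\mathrm{KO}}\sigma)$, combined with the monotonicity of $\widehat K^{-1}$, gives
\begin{equation*}
\int^{+\infty}\frac{ds}{\widehat K^{-1}(\sigma F(s))}
\leq \frac{C}{\sigma}\int^{+\infty}\frac{dt}{\widehat K^{-1}(F(t))},
\end{equation*}
which closes the equivalence for $0<\sigma\leq 1$.

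The case $\sigma>1$ is then reduced to the previous one by the same rescaling trick: set $F_\sigma=\sigma F$, observe that $(\widehat{\mathrm{KO}}\sigma)$ is exactly $(\widehat{\mathrm{KO}})$ for the primitive $F_\sigma$, and apply the already established equivalence with exponent $\sigma^{-1}\leq 1$; since $\widehat K^{-1}(\sigma^{-1}F_\sigma(s))^{-1}=\widehat K^{-1}(F(s))^{-1}$, one recovers $(\widehat{\mathrm{KO}})$. Because the argument is a transcription of an earlier proof and the only substantive point is the monotonicity of $\widehat K^{-1}$, which is immediate from ($\vp\ell$)$_3$, there is essentially no obstacle here; the lemma is recorded chiefly for reference, to be used in the mean-curvature version of Theorem~A developed in the remainder of the section.
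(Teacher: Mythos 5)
Your proposal is correct and is exactly the paper's intended argument: immediately after the statement, the paper observes that the proof of Lemma~\ref{lemma_KOsigma} ``depends only on the monotonicity of $K$ and the $C$-monotonicity of $f$, and can be repeated without change replacing $K$ with $\widehat K$.'' Your spelled-out transcription of that argument, including the reduction of $\sigma>1$ to $\sigma\leq 1$ via $F_\sigma=\sigma F$, matches the original step for step.
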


Indeed, the proof of Lemma~\ref{lemma_KOsigma} depends only on the monotonicity of $K$ and the
$C$-monotonicity of $f$, and can be repeated without change replacing $K$ with $\widehat K$.

Similarly, using Remark~\ref{rmk prop 3.2bis}, one establishes the following

\begin{proposition}
\label{prop 3.2ter}
Assume that conditions ($\Phi_0$) and ($L_1$) hold, and let $F$ be a positive function
defined on $\R_0^+$. If ($\theta$)$_2$ holds with $\theta<2$, then there
exists a constant  $B>1$ such that, for every $\sigma \leq 1$ we have
\begin{equation}
\label{eq_prop3.2ter_1}
\frac{\sigma^{1/(2-\theta)}}{\widehat K^{-1}(\sigma F(t))}\leq
\frac B{\widehat K^{-1}(F(t))}
\quad \text{ on } \R^+.
\end{equation}
\end{proposition}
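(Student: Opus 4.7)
The plan is to follow the proof of Proposition~\ref{prop 3.2bis} almost verbatim, with $K$ replaced by $\widehat K$, since as pointed out in Remark~\ref{rmk prop 3.2bis} the entire argument depends only on a single scaling property of the integrand defining the relevant $K$-type function, together with the monotonicity of $f$ (which enters only through $F$). First I would unpack assumption ($\theta$)$_2$ into the explicit inequality that the estimate requires. Namely, the $C$-increasing property of $t\mapsto \vp(t)t^{\theta-1}/\ell(t)$ gives a constant $C\geq 1$ with
\[
\frac{\vp(s)}{\ell(s)} s^{\theta-1}\ \leq\ C\,\frac{\vp(\lambda s)}{\ell(\lambda s)} (\lambda s)^{\theta-1}\qquad\text{for all } s\in\R^+,\ \lambda\geq 1,
\]
which rearranges to the scaling bound
\[
\frac{\vp(\lambda s)}{\ell(\lambda s)}\ \geq\ C^{-1}\lambda^{1-\theta}\,\frac{\vp(s)}{\ell(s)},\qquad s\in\R^+,\ \lambda\geq 1.
\]
This is precisely the $C$-monotonicity identified in Remark~\ref{rmk prop 3.2bis} applied to the new integrand $\psi(s)=\vp(s)/\ell(s)$ of $\widehat K$.

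Next I would perform the same change of variables as in the proof of Proposition~\ref{prop 3.2bis}. For every $\lambda\geq 1$ and $t\in\R^+$,
\[
\widehat K(\lambda t)\ =\ \int_0^{\lambda t}\frac{\vp(s)}{\ell(s)}\,ds\ =\ \lambda\int_0^{t}\frac{\vp(\lambda s)}{\ell(\lambda s)}\,ds\ \geq\ C^{-1}\lambda^{2-\theta}\int_0^{t}\frac{\vp(s)}{\ell(s)}\,ds\ =\ C^{-1}\lambda^{2-\theta}\widehat K(t).
\]
Since $\widehat K^{-1}$ is non-decreasing on $\R^+_0$ (it is a $C^1$ diffeomorphism by ($\vp\ell$)$_3$), applying $\widehat K^{-1}$ to both sides and setting $t=\widehat K^{-1}(\sigma F(s))$ yields
\[
\lambda \widehat K^{-1}\bigl(\sigma F(s)\bigr)\ \geq\ \widehat K^{-1}\bigl(C^{-1}\lambda^{2-\theta}\sigma F(s)\bigr).
\]

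Finally, to produce $\widehat K^{-1}(F(s))$ on the right hand side I would choose $\lambda=(C/\sigma)^{1/(2-\theta)}$, which is $\geq 1$ whenever $\sigma\leq 1$ (recall $\theta<2$, so the exponent is positive and the choice is admissible). Substituting and rearranging gives
\[
\frac{\sigma^{1/(2-\theta)}}{\widehat K^{-1}(\sigma F(s))}\ \leq\ \frac{B}{\widehat K^{-1}(F(s))},\qquad B\ =\ C^{1/(2-\theta)},
\]
which is the desired bound (\ref{eq_prop3.2ter_1}). Note that, as in Proposition~\ref{prop 3.2bis}, no monotonicity of $F$ is used: it enters only as the argument of $\widehat K^{-1}$, and the estimate holds pointwise in $t$ for any positive $F$.

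I do not anticipate a genuine obstacle here: the only substantive point is recognising that ($\theta$)$_2$ with $\theta<2$ supplies exactly the scaling inequality needed to run the Proposition~\ref{prop 3.2bis} argument for the new integrand, and that the exponent $2-\theta$ remains positive so the rescaling parameter $\lambda=(C/\sigma)^{1/(2-\theta)}$ is $\geq 1$ for $\sigma\leq 1$. Everything else is a direct transcription.
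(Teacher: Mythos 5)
Your proposal is correct and takes exactly the route the paper intends: the paper gives no separate proof of Proposition~\ref{prop 3.2ter} beyond the remark that the argument of Proposition~\ref{prop 3.2bis} depends only on the scaling property $\psi(\lambda s)\geq C^{-1}\lambda^{1-\theta}\psi(s)$ of the integrand, and you have verified this for $\psi(s)=\vp(s)/\ell(s)$ using ($\theta$)$_2$ and run the identical change of variables to get $\widehat K(\lambda t)\geq C^{-1}\lambda^{2-\theta}\widehat K(t)$ and hence the bound with $B=C^{1/(2-\theta)}$.
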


Finally, we have
\begin{proposition}
\label{prop4.2}
Assume that ($\Phi_0$), ($\Phi_2$),
($F_1$), ($L_1$), ($L_2$), ($\vp\ell$)$_1$,
($\theta$)$_2$ and (\^{K}O) hold,
let $\tilde b$ a function satisfying assumption (b), and let  $A>0,$  and $\beta \in [-2,+\infty)$.
If $\lambda$ and $\theta$ are the constants specified in
(b) and ($\theta$), assume also that
\begin{equation}
\tag{\ref{eq3.8}}
\begin{split}
&\lambda (2-\theta)\geq 1 \text{ and }\\
\text{ either } &\rmi t^{\beta/2} \tilde b(t)^{\lambda(1-\theta)-1}
\int_1^t\tilde b(s)^\lambda ds \leq C
\,\text{ for }\,  t\geq t_o\\
\text{ or } & \rmii t^{\beta/2} \tilde b(t)^{\lambda(1-\theta)-1} \leq C\
\text{ for } t\geq t_o \, \text{ and }\, \theta <1.\\
\end{split}
\end{equation}
Then there exists $T>0$ sufficiently large such that, for every $T\leq
t_0<t_1$ and $0<\epsilon<\eta$, there exist $\bar T>t_1$ and a $C^2$
function $\alpha : [t_0, \bar T)\to [\epsilon, +\infty)$ which
is a solution of the problem
\begin{equation}
\tag{\ref{eq3.9}}
\begin{cases}
\vp'(\alpha ') \alpha '' + A t^{\beta/2} \vp(\alpha ') \leq \tilde
b(t) f(\alpha) \ell (\alpha) \quad \text{on }\, [t_0, \bar T)&\\
\alpha '>0 \text{ on } [t_0, \bar T), \,\, \alpha(t_0) = \epsilon,
\, \, \alpha(t)\to +\infty \text{ as } t\to \bar T^-&
\end{cases}
\end{equation}
and satisfies
\begin{equation}
\tag{\ref{eq3.10}}
\epsilon \leq \alpha \leq \eta \quad \text{ on } \, [t_0, t_1].
\end{equation}
\end{proposition}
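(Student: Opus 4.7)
The plan is to mirror the proof of Proposition~\ref{prop3.7}, with $\widehat K$ replacing $K$ throughout and with assumption $(\Phi_2)$ supplying the essential bridge between the $\vp$--estimates produced naturally by $\widehat K'(t)=\vp(t)/\ell(t)$ and the $\vp'$--estimates required by the target inequality~(\ref{eq3.9}). After fixing $T>0$ large enough that $\tilde b>0$, $\tilde b'\le 0$ and (after harmless rescaling) $\tilde b\le 1$ on $[T,+\infty)$, for each $\sigma\in(0,1]$ I set $C_\sigma=\int_\epsilon^{+\infty}ds/\widehat K^{-1}(\sigma F(s))$, which is finite by $(\widehat{\mathrm{KO}})$ together with Lemma~\ref{lemma_KOsigma_bis}. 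Since $\tilde b^\lambda\notin L^1(+\infty)$, I pick $T_\sigma>t_0$ with $C_\sigma=\int_{t_0}^{T_\sigma}\tilde b^\lambda\,ds$; monotone convergence yields $C_\sigma\to+\infty$ as $\sigma\to 0^+$, allowing us to arrange $T_\sigma>t_1$. The function $\alpha:[t_0,T_\sigma)\to[\epsilon,+\infty)$ is then defined implicitly by
$$\int_t^{T_\sigma}\tilde b(s)^\lambda\,ds=\int_{\alpha(t)}^{+\infty}\frac{ds}{\widehat K^{-1}(\sigma F(s))},$$
so $\alpha(t_0)=\epsilon$, $\alpha(t)\to+\infty$ as $t\to T_\sigma^-$, and two successive differentiations give $\widehat K(\alpha'/\tilde b^\lambda)=\sigma F(\alpha)$ and
$$\frac{\vp(\alpha'/\tilde b^\lambda)}{\ell(\alpha'/\tilde b^\lambda)}\Bigl(\frac{\alpha'}{\tilde b^\lambda}\Bigr)'=\sigma f(\alpha)\alpha'.$$

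Using $\tilde b'\le 0$ to replace $(\alpha'/\tilde b^\lambda)'$ by the smaller $\alpha''/\tilde b^\lambda$, then applying the $C$--increasing form of $(\theta)_2$ (the analog of~(\ref{prop3.2_eq1})) with $s=1/\tilde b^\lambda\ge 1$ to transfer the ratio $\vp/\ell$ from the argument $\alpha'/\tilde b^\lambda$ back to $\alpha'$, and cancelling the common factor $\ell(\alpha'/\tilde b^\lambda)$, I obtain
$$\vp(\alpha')\alpha''\le C\sigma\tilde b^{\lambda(2-\theta)}\,\alpha' f(\alpha)\ell(\alpha').$$
Here $(\Phi_2)$ plays its decisive role: the bound $\vp'(t)\le \vp(t)/(Ct)$ lets me divide by $\alpha'$ to conclude
$$\vp'(\alpha')\alpha''\le\{C\sigma\tilde b^{\lambda(2-\theta)-1}\}\,\tilde b\,f(\alpha)\ell(\alpha'),$$
the exact analog of~(\ref{eq3.15}).

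For the drift term $At^{\beta/2}\vp(\alpha')$, I would apply $(\Phi_2)$ once more to the identity $\vp(u)u'=\sigma\ell(u)f(\alpha)\alpha'$ (with $u=\alpha'/\tilde b^\lambda$) to get $\vp'(u)u'\le \sigma\tilde b^\lambda\ell(u)f(\alpha)/C$, integrate from $t_0$ to $t$, and invoke the $C$--increasing properties of $f$ and $\ell$ together with the monotonicity of $\alpha$ and $u$ to derive the analog of~(\ref{eq3.19}). Transferring back to $\vp(\alpha')/\ell(\alpha')$ via $(\theta)_2$ and $(L_2)$, and using $(\alpha'/\tilde b^\lambda)(t_0)=\widehat K^{-1}(\sigma F(\epsilon))$, I arrive at a three-term decomposition $N_\sigma(t)=(I)+(II)+(III)$ mirroring~(\ref{eq3.20bis}) with $\widehat K^{-1}$ in place of $K^{-1}$. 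The three summands vanish uniformly on $[t_0,T_\sigma)$ by the same reasoning: $(I)\to 0$ since $\lambda(2-\theta)-1\ge 0$ and $\tilde b\le 1$; $(II)\to 0$ along a subsequence $\sigma_k\to 0^+$ by $(\vp\ell)_1$, noting $\widehat K^{-1}(\sigma F(\epsilon))\to 0^+$ because $\widehat K(0)=0$ (a consequence of $(\vp\ell)_3$); and $(III)\to 0$ either directly under alternative~(i) of~(\ref{eq3.8}), or, under alternative~(ii), by estimating $\int_{t_0}^t\tilde b^\lambda\le C\sigma^{-1/(2-\theta)}$ through Proposition~\ref{prop 3.2ter} and using $\theta<1$. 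Picking $\sigma$ small enough to force both $N_\sigma\le 1$ and $\int_{t_0}^{t_1}\tilde b^\lambda\le\int_\epsilon^\eta ds/\widehat K^{-1}(\sigma F(s))$ completes the construction.

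The main obstacle, and the raison d'\^etre of hypothesis $(\Phi_2)$, is precisely this bridging step: the integrand of $\widehat K$ is $\vp(s)/\ell(s)$ rather than $s\vp'(s)/\ell(s)$, so differentiating the implicit definition of $\alpha$ produces $\vp(\alpha'/\tilde b^\lambda)$ in place of $(\alpha'/\tilde b^\lambda)\vp'(\alpha'/\tilde b^\lambda)$. Without a Keller-type bound $\vp(t)\gtrsim t\vp'(t)$ one cannot recover control on $\vp'(\alpha')\alpha''$, which is the term arising from the radial computation of $L_{D,\vp}v$ along $v(x)=\alpha(r(x))$. Note that $(\Phi_2)$ holds for the mean curvature operator but fails for $\vp(t)=te^{t^2}$, in line with the discussion of exponentially harmonic functions preceding the statement.
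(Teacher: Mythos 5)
Your proposal is correct and follows essentially the same route as the paper's own proof: define $\alpha$ implicitly via $\widehat K$, differentiate to obtain the analog of (\ref{eq4.11}), use $\tilde b'\le 0$, $(\theta)_2$ and $(\Phi_2)$ to reproduce (\ref{eq4.13}), and apply $(\Phi_2)$ again with integration and the $C$-monotonicity of $f,\ell$ to reproduce (\ref{eq4.14}), landing on the same $N_\sigma$ decomposition as (\ref{eq3.20bis}). The parenthetical remark about ``cancelling the common factor $\ell(\alpha'/\tilde b^\lambda)$'' is imprecise phrasing (you are replacing the ratio $\vp/\ell$ at $\alpha'/\tilde b^\lambda$ by $C^{-1}\tilde b^{-\lambda(1-\theta)}$ times its value at $\alpha'$, not cancelling $\ell$), but the ensuing inequality is correct and the identification of $(\Phi_2)$ as the bridge from $\vp$-estimates to $\vp'$-estimates is exactly the point.
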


\begin{proof}
The proof is a small variation of that of Proposition~\ref{prop3.7}, using $\widehat K$
instead of $K$ in the definition of $\alpha$.

Note first of all that (\ref{eq3.8}) forces $\theta<2$, so that
($\vp\ell$)$_3$ is automatically satisfied.

Arguing as in Proposition~\ref{prop3.7}, one deduces that $\alpha'>0$
and $\alpha$ satisfies
\begin{equation}
\label{eq4.11} \sigma f(\alpha) \alpha' =
\frac{\vp(\alpha'/\tilde b^\lambda)}{\ell(\alpha'/\tilde b^\lambda)}
\bigl(\alpha'/\tilde b^\lambda\bigr)',
\end{equation}
so, again, $\alpha'/\tilde b^\lambda$ is
 increasing on $[t_0, T_\sigma)$. From this, using the fact that
$t^{\theta-1}\vp(t)/\ell(t)$ is $C$-increasing (assumption ($\theta$)$_2$),
$\vp(t)\geq C t \vp'(t)$ (assumption ($\Phi_2$)), and $\tilde b(t)^{-\lambda}>1$, we obtain
\begin{equation}
\label{eq4.13}
\vp'(\alpha')\alpha ''\leq
\Bigl(C\sigma\tilde b^{\lambda (2-\theta)-1}\Bigr) bf(\alpha)
\ell(\alpha')
\end{equation}
on $[t_0,T_\sigma)$, for some  constant $C>0$. On the other
hand, applying ($\Phi_2$) to (\ref{eq4.11}), rearranging,
integrating  over $[t_0,t]$, and using ($F_1$), ($L_2$) and
the fact that $\alpha$ and $\alpha'/\tilde b^\lambda $ are
increasing, we deduce that
\begin{equation*}
\vp(\alpha'/\tilde b^\lambda)  \leq \vp(\alpha'/\tilde b^\lambda)(t_0) +
 C\sigma f(\alpha)\ell(\alpha'/\tilde b^\lambda)
\int_{t_0}^t\tilde b(s)^\lambda ds.
\end{equation*}
Finally, using ($F_1$), ($L_2$), the fact that $\alpha$ and $\alpha'/\tilde
b^\lambda$ are non-decreasing, $\alpha(t_0)=\epsilon$ and
($\theta$)$_2$ we obtain
\begin{equation}
\label{eq4.14}
\frac{\vp(\alpha')}{\ell(\alpha')} \leq
C \tilde b^{\lambda (1-\theta)-1}
\Bigl[\frac{\vp(\alpha'/\tilde b^\lambda)(t_0)}{f(\epsilon) \ell(\alpha'/\tilde
b^\lambda)(t_0)} + \sigma\int_{t_0}^t\tilde b(s)^\lambda
\Bigr] \tilde b f(\alpha).
\end{equation}
Combining (\ref{eq4.13}) and (\ref{eq4.14}) we conclude that
\begin{equation}
\label{eq4.15}
\vp'(\alpha')'\alpha'' +A t^{\beta/2} \vp(\alpha') \leq
N_\sigma \tilde b f(\alpha)\ell(\alpha')
\end{equation}
with $N_\sigma(t)$ defined as in (\ref{eq3.20bis}).

The proof now proceeds exactly as in the case of Proposition~\ref{prop3.7}
\end{proof}

We then have the following version of Theorem~\ref{thm3.21}:

\begin{theorem}
\label{thm4.1}
Let $(M,\langle  \,,\rangle)$ be a complete Riemannian manifold
satisfying
\begin{equation}
\tag{\ref{genRicci_lower_bound}}
\mathrm{Ricc}_{n,m} (L_D) \geq H^2(1+r^2)^{\beta/2},
\end{equation}
for some $n>m$, $H>0$ and $\beta\geq -2$ and assume  that ($\Phi_0$), ($\Phi_2$), ($F_1$),
($L_1$), ($L_2$), ($\vp\ell$)$_1$, ($\vp\ell$)$_2$ and ($\theta$)$_2$ hold. Let
$b(x)\in C^0(M)$, $b(x)\geq 0$ on $M$ and suppose that
\begin{equation}
\tag{\ref{eq3.22}}
b(x)\geq  \tilde  b(r(x)) \quad \text{for } \, \, r(x)\gg 1,
\end{equation}
where $\tilde b$ satisfies assumption ($b$) and (\ref{eq3.8}).
If the modified Keller--Osserman condition
\begin{equation}
\tag{\^{K}O} \frac 1 {\widehat{K}^{-1} (F(t))} \in L^1(+\infty)
\end{equation}
holds then any entire classical weak solution $u$ of the
differential inequality
\begin{equation}
\tag{\ref{main_ineq}}
L_{D,\vp} u \geq b(x) f(u) \ell(\modnabla u)
\end{equation}
is either non-positive or constant. Furthermore, if $u\geq 0,$ and $\ell(0)
>0$, then $u$ vanishes identically.
\end{theorem}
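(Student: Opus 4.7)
My plan is to reduce Theorem~\ref{thm4.1} to Theorem~\ref{thm3.21} by replaying that proof verbatim, with the single substitution of Proposition~\ref{prop4.2} in place of Proposition~\ref{prop3.7} as the source of the radial supersolution. Since Proposition~\ref{prop4.2} has already been established, and since the comparison machinery used in Theorem~\ref{thm3.21} never invokes $(\vp\ell)_2$ explicitly (it only needs the existence of $\alpha$ satisfying \eqref{eq3.9}--\eqref{eq3.10}), the argument goes through with essentially no new content.

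Concretely, I would argue as follows. Suppose $u$ is neither nonpositive nor constant, and assume first $u^\ast=\sup_M u<+\infty$. Pick $T$ large enough that \eqref{eq3.22} holds outside $B_T$ and that, by Proposition~\ref{weighted_laplacian_comparison} and \eqref{genRicci_lower_bound}, one has the radial Laplacian bound $L_D r\le A r^{\beta/2}$ outside $B_T$. For $r_o\ge T$, the strong maximum principle (applied in the connected component of $\{u\ge 0\}$ meeting $B_{r_o}$, using $(F_1)$ and $(L_1)$) forces $u^\ast_o=\sup_{B_{r_o}}u<u^\ast$, so there exist $\eta>0$ with $u^\ast_o+2\eta<u^\ast$ and $\tilde x\notin\overline B_{r_o}$ with $u(\tilde x)>u^\ast-\eta$. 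Setting $t_0=r_o$, $t_1=r(\tilde x)$, Proposition~\ref{prop4.2} (applied with the constant $A$ above and with $\tilde b$ scaled by $(2C)^{-1}$, where $C$ is the $C$-monotonicity constant of $f$) delivers $\bar T>t_1$ and $\alpha\in C^2([t_0,\bar T))$ satisfying \eqref{eq3.9}--\eqref{eq3.10}. The radial function $v(x)=\alpha(r(x))$ is then a weak supersolution of \eqref{eq3.2bis} on $B_{\bar T}\setminus\overline B_{r_o}$, pointwise off the cut locus.

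The comparison step is then identical to the one in Theorem~\ref{thm3.21}: $u-v$ attains a positive maximum $\mu$ in the annulus, its maximum set $\Gamma_\mu$ is compact, and at each $y\in\Gamma_\mu$ the triangle-inequality trick with $v_\epsilon(x)=\alpha(\epsilon+d(x,o_\epsilon))$ gives $|\nabla u(y)|=\alpha'(r(y))$, even if $y$ lies on $\mathrm{cut}(o)$. By $C$-monotonicity of $f$, in a neighborhood $\mathcal U$ of $\Gamma_\mu$ the inequality $b(x)f(u)\ell(|\nabla u|)\ge (2C)^{-1}b(x)f(v)\ell(\alpha'(r))$ holds, hence $L_{D,\vp}u\ge L_{D,\vp}v$ weakly on $\mathcal U$. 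Shrinking the super-level set $\Omega_{y,\zeta}=\{u>v+\zeta\}$ so that $\overline{\Omega_{y,\zeta}}\subset\mathcal U$, the weak comparison principle (\cite{PigolaRigoliSetti-Memoirs}, Prop.~6.1), valid here by $(\Phi_0)$, yields $u\le v+\zeta$ on $\Omega_{y,\zeta}$, contradicting $y\in\Omega_{y,\zeta}$. The case $u^\ast=+\infty$ is handled exactly as in Theorem~\ref{thm3.21} and is simpler (one can take $\eta$ arbitrarily large). Finally, if $u\ge 0$ and $\ell(0)>0$, a constant $u\equiv c>0$ would force $0=L_{D,\vp}u\ge b(x)f(c)\ell(0)>0$ at points where $b>0$, forcing $u\equiv 0$.

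The only place where the switch from $(\vp\ell)_2$ to $(\vp\ell)_3$ and the appearance of $(\Phi_2)$ have any bite is inside Proposition~\ref{prop4.2}, and this has already been taken care of. Hence the main obstacle—controlling the nonlinear term $At^{\beta/2}\vp(\alpha')$ via the new $\widehat K$, which for the mean curvature operator requires $(\Phi_2)$ to convert $\vp$-bounds into $\vp'$-bounds—lives entirely inside Proposition~\ref{prop4.2}, not in the comparison argument of Theorem~\ref{thm4.1}. Everything else is a mechanical transcription of the proof of Theorem~\ref{thm3.21}.
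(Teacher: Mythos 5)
Your proposal is correct and is exactly the paper's intended argument: the paper states Theorem~4.1 without a separate proof precisely because it is obtained from the proof of Theorem~\ref{thm3.21} verbatim, with Proposition~\ref{prop4.2} (using $\widehat K$, ($\Phi_2$), ($\vp\ell$)$_3$) supplying the radial barrier in place of Proposition~\ref{prop3.7}. Your observation that the comparison machinery only consumes the existence of $\alpha$ satisfying \eqref{eq3.9}--\eqref{eq3.10}, so all the new content is already encapsulated in Proposition~\ref{prop4.2}, is exactly the right reduction.
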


According to Remark~\ref{proof ThmA},
Theorem~\ref{thm4.1} holds if we assume that $\tilde b(t) = C/ t^\mu
$ for $t\gg 1$ where $\mu\geq 0$ and 
\begin{equation}
\tag{$\theta\beta\mu$}
\begin{cases}
\theta < 1-\beta/2 - \mu \,\, \text{ or } \,\,\theta = 1-\beta/2 -
\mu<1 &\text{if }\,  \mu>0\\
\theta< 1-\beta/2 &\text{ if } \, \mu=0.
\end{cases}
\end{equation}

We note that in the model case of the mean curvature operator
with
\begin{equation*}
\ell(t)= t^q, \, q\geq 0,
\end{equation*}
then assumptions ($\Phi_0$), ($\Phi_2$), ($\vp\ell$)$_1$ and
($\theta$)$_2$ hold provided
\begin{equation*}
(0\leq)q <1, \quad \theta \geq 1+q
\end{equation*}
and the above restrictions are compatible with ($\theta\beta\mu$).

\section{Weak Maximum Principle and Non-Existence of Bounded Solutions}
\label{section weak_maximum}
As shown in Section~\ref{section_proof_ThmA} above, the failure of
the Keller-Osserman condition,
allows to deduce existence of
solutions of the differential inequality (\ref{main_ineq}). The
solutions thus constructed diverge at infinity. This is no accident.
Indeed, Theorem~B shows that under rather mild conditions on
the coefficients and on the geometry of the manifold, if solutions
exist, they must be unbounded, and in fact, must go to infinity
sufficiently fast.

The proof of the Theorem~B depends on the following weak maximum principle
for the diffusion operator $L_{D,\vp}$ which improves on the weak maximum
principle for the $\vp$-Laplacian already considered in
\cite{RigoliSalvatoriVignati-Pacific}, \cite{RigoliSetti-Revista},
\cite{RigoliSalvatoriVignati-Revista} and \cite{PigolaRigoliSetti-Memoirs}.
It is worth pointing out that, besides allowing the presence of a term depending
on the gradient of $u$, we are able to deal with $C^1$ functions, removing the
requirement that $u\in C^2(M)$ and that the vector field
$|\nabla u|^{-1} \vp(|\nabla u|) \nabla u$ be $C^1$.

In order to formulate our version of the weak maximum principle, we note
that if $X$ is a $C^1$ vector field, and $v$ a positive continuous function on an open set $\Omega$,
then the following two statements
\begin{itemize}
\item[\rmi] $\inf_{\Omega} v^{-1} \diver X \leq C_o $,
\item[\rmii] if  $\diver X\geq  C v$ on $\Omega$ for some constant $C$, then
$C\leq C_o.$
\end{itemize}
Since \rmii is meaningful for in distributional sense, we may take it as
the weak definition of \rmi, and apply it to the case where $X$ is only $C^0$
($L^\infty_{loc}$ would suffice), and $v$ is only assumed to be non-negative
and continuous. Indeed, it is precisely the implication  stated in
\rmii that will allow us to prove Theorem~B.

In view of  applications to the case of the diffusion operator $L_{D,\vp}$,
it may also be useful to observe that, if the weight function $D(x)$ is assumed to be $C^1$
(indeed, $W^{1,1}_{loc}$ is enough if $X$ is assumed to be merely in
$L^\infty_{loc}$), then the weak inequality
$$
D(x)^{-1}\diver X  \geq C v
$$
is in fact equivalent to the inequality
$$
\diver X \geq C D(x) v
$$


%
%
%
\begin{theorem}
\label{improved weak max principle}
Let $(M, \langle\,,\rangle)$ be a complete Riemannian manifold,
let $D(x)\in C^0(M)$ be a positive weight on
$M$, and let $\vp$ satisfy ($\Phi_1$). Given $\sigma,$ $\mu$, $\chi\in \R$, let
\begin{equation*}
\eta = \mu + (\sigma -1)(1+\delta - \chi),
\end{equation*}
and assume that
\begin{equation*}
\sigma \geq 0, \quad \sigma -\eta \geq 0, \,\text{ and } \, 0\leq \chi < \delta.
\end{equation*}
Let $u\in C^1 (M)$ be a non constant function such that
\begin{equation}
\label{u growth bis}
\hat u = \limsup_{r(x) \to +\infty} \frac{u(x)}{r(x)^\sigma} <+\infty.
\end{equation}
and suppose that either
\begin{equation}
\label{vol growth exp bis}
\liminf_{r\to +\infty} \frac{\log \volD B_r}
{r^{\sigma -\eta}} = d_0
<+\infty\quad \text{ if } \sigma -\eta >0
\end{equation}
or
\begin{equation}
\label{vol growth poly  bis}
\liminf_{r\to +\infty} \frac{\log \volD B_r}{\log r}= d_0 <+\infty\quad \text{ if } \sigma -\eta =0.
\end{equation}
Suppose that $\gamma\in \R$ is such that the superset $\Omega _\gamma =\{x\in M \,:\, u(x)>\gamma\}$
is not empty, and that the weak inequality
\begin{equation}
\label{u inf bound}
\div{ D(x) |\nabla u|^{-1} \vp(|\nabla u|) \nabla u}
\geq
K   \bigl( 1+r(x)\bigr)^{-\mu} |\nabla u|^\chi D(x)
\end{equation}
holds on $\Omega_\gamma$. Then the constant $K$ satisfies
\begin{equation}
\label{K ineq} K \leq C(\sigma,\delta,\eta,\chi,d_0) \max\{\hat u,
0\}^{\delta-\chi}
\end{equation}
where    $C=C(\sigma,\delta,\eta, \chi,\d_0)$ is given
by
\begin{equation}
\label{C expression}
C=
\begin{cases}
0 &\text{if } \sigma = 0\\
A d_0(\sigma -\eta)^{1+\delta - \chi} &\text{ if }\sigma>0, \, \eta<0  \\
A d_0 \sigma^{\delta -\chi} (\sigma - \eta) &\text{ if } \sigma >0, \, \eta\geq 0,
\end{cases}
\end{equation}
if $\sigma -\eta > 0$  and by
\begin{equation}
\label{C expression bis}
C=
\begin{cases}
0 &\text{if } \sigma= 0 \\
 & \text{or   } \sigma >0, \, \delta(\sigma -1) +d_0-1\leq 0\\
A \sigma^{\delta - \chi} [\delta(\sigma -1) +d_0-1] & \text{if }
\sigma >0, \,\delta(\sigma -1) +d_0-1 > 0\\
\end{cases}
\end{equation}
if $\sigma -\eta = 0$.
\end{theorem}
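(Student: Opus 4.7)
The plan is to argue by contradiction: assume $K$ strictly exceeds the constant $C$ on the right of \eqref{K ineq}, and derive a contradiction by testing the weak form of \eqref{u inf bound} against a carefully chosen compactly supported nonnegative function and invoking the weighted volume hypothesis \eqref{vol growth exp bis} or \eqref{vol growth poly bis}. Fix $\gamma'>\gamma$ with $\Omega_{\gamma'}\ne\emptyset$, and set $v=(u-\gamma')_+$, a nonnegative Lipschitz function supported in $\overline{\Omega_{\gamma'}}\subset\Omega_\gamma$, with $\nabla v=\nabla u$ on $\{v>0\}$. For $R\gg 1$ take a standard Lipschitz cut-off $\eta_R$, equal to $1$ on $B_R$, vanishing off $B_{2R}$, with $|\nabla\eta_R|\le 2/R$. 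Plug $\psi=v^p\eta_R^q$ (with $p,q\ge 1$ to be tuned to match the scaling of the problem) into the weak form of \eqref{u inf bound}: expanding $\nabla\psi$, keeping the nonnegative Caccioppoli-like term $p\int v^{p-1}\vp(|\nabla u|)|\nabla u|\eta_R^q D\,dV$ on the left, and applying $\vp(t)\le At^\delta$ to the contribution from $\nabla\eta_R$, we obtain the core inequality
\[
p\!\int\! v^{p-1}\vp(|\nabla u|)|\nabla u|\eta_R^q D\,dV + K\!\int\! v^p\eta_R^q(1+r)^{-\mu}|\nabla u|^\chi D\,dV \le \frac{2qA}{R}\!\int_{B_{2R}\setminus B_R}\! v^p\eta_R^{q-1}|\nabla u|^\delta D\,dV.
\]

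The heart of the argument is to reduce the right-hand side back to the left via Young's inequality. Since $\chi<\delta$, writing $|\nabla u|^\delta=|\nabla u|^\chi\cdot|\nabla u|^{\delta-\chi}$ and applying Young with conjugate exponents $(s,s')$ chosen so that one factor reproduces the $|\nabla u|^\chi$ integrand on the left, a small multiple of the second left-hand integral is absorbed, at the cost of a residual term of the form $v^p\eta_R^{q-s'}(1+r)^{\mu(s'-1)}|\nabla u|^{(\delta-\chi)s'+\chi}$; the condition $\chi<\delta$ precludes eliminating $|\nabla u|$ from the residual entirely by any choice of conjugate exponents. The residual is then handled integrally: using the pointwise growth $v(x)\le(\hat u+o(1))r(x)^\sigma$ as $r(x)\to\infty$ (or $v\le v^*<+\infty$ when $\sigma=0$) to dispose of the $v^p$ factor, and estimating the remaining $|\nabla u|$-moment by an iteration/interpolation argument using the weighted volume bound \eqref{vol growth exp bis}--\eqref{vol growth poly bis} on $B_{2R}$, one reduces to a bound involving only the weighted volume of the annulus $B_{2R}\setminus B_R$. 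Dividing by the positive quantity $I(R)=\int v^p\eta_R^q(1+r)^{-\mu}|\nabla u|^\chi D\,dV$, which is bounded below uniformly in $R$ (since $v>0$ on a set of positive measure for $R$ large), and letting $R\to\infty$ along the sequence realizing the $\liminf$ in the volume hypothesis, one arrives at $K\le C\hat u^{\delta-\chi}(1+o(1))$, with $C=C(\sigma,\delta,\chi,\eta,d_0,p,q)$. Optimizing over $p$ and $q$ produces the sharp constants in \eqref{C expression}--\eqref{C expression bis}, the factor $\hat u^{\delta-\chi}$ arising from the $(\delta-\chi)$-power of $|\nabla u|$ split off by the Young inequality.

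The main technical obstacle is the precise accounting of constants in the Young step. Because only the upper bound $\vp(t)\le At^\delta$ is assumed, no $|\nabla u|^{\delta+1}$ contribution can be reabsorbed through the Caccioppoli-like term $p\int v^{p-1}\vp(|\nabla u|)|\nabla u|\eta_R^q D\,dV$ (which would require a matching lower bound on $\vp$), forcing the entire residual to be treated in integrated form. The parameters $p,q,s,s'$ must be threaded carefully so that the scaling factor $\sigma^{\delta-\chi}$ (or $(\sigma-\eta)^{1+\delta-\chi}$ when $\eta<0$) arising from the polynomial growth of $v$ combines with the volume growth rate $d_0$ to reproduce exactly the constants in \eqref{C expression}--\eqref{C expression bis}; the dichotomy $\sigma>\eta$ versus $\sigma=\eta$ then corresponds to exponential versus polynomial weighted volume growth, while the case $\sigma=0$ follows along the same lines with the uniform bound $v^*$ replacing any polynomial growth factor.
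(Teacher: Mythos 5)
Your proposal correctly identifies the structure of the problem (test the weak inequality against a compactly supported function, exploit the volume growth and the growth of $u$) and correctly flags the central obstacle: with only the upper bound $\vp(t)\le At^\delta$ available, the Caccioppoli-type term $p\int v^{p-1}\vp(|\nabla u|)|\nabla u|\eta_R^q\,D$ cannot be used to absorb a cross term involving high powers of $|\nabla u|$. But having seen the obstacle, you do not actually surmount it, and the vague step where you claim to ``estimate the remaining $|\nabla u|$-moment by an iteration/interpolation argument using the weighted volume bound'' is exactly where the proof breaks. After Young's inequality, a genuine power of $|\nabla u|$ survives in an integral over the annulus $B_{2R}\setminus B_R$. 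No a priori $L^s$ estimate on $\nabla u$ is available from the hypotheses: $u$ is only $C^1$, grows at most like $r^\sigma$, and nothing controls $\nabla u$ outside of what the inequality itself gives. There is no Sobolev or Poincar\'e inequality in force on a general complete $(M,\langle\,,\rangle)$ with a weight, so iteration has nothing to feed on. More decisively, the test function $v^p\eta_R^q$ is purely polynomial in $R$, while under \eqref{vol growth exp bis} the weighted volume can grow like $e^{dR^{\sigma-\eta}}$; a polynomial-weight argument cannot dominate exponential volume growth, so no choice of $p,q,s,s'$ can close the estimate in the main case $\sigma-\eta>0$.

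The paper closes precisely this gap by an entirely different choice of test function. One takes $\rho=\psi^{1+\delta}\lambda(u)F(v,r)$ with $v=\alpha(1+r)^\sigma-u$ ($\alpha>b>\max\{\hat u,0\}$) and the \emph{exponential} auxiliary factor $F(v,r)=\exp\!\bigl[-qv(1+r)^{-\eta}\bigr]$. Because $\partial F/\partial v<0$, the part of $\nabla\rho$ coming from $\nabla(F(v,r))$ has a favorable sign and produces, after the pointwise Young step carried out through the calculus lemma (the one estimating $\omega s^{1+\nu}+\rho-\beta s^\nu\ge\Lambda s^{1+\nu}$), the lower bound $B(u,r)\ge\Lambda\vp(|\nabla u|)^{1+1/\delta}$ with an explicit, optimizable $\Lambda$. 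This makes the absorption work at the \emph{pointwise} rather than integral level, so no moment of $|\nabla u|$ ever needs to be controlled. Finally, the lower bound $v\ge(\alpha-b)(1+r)^\sigma$ on $\Omega_\gamma$ makes $F(v,r)\le\exp\!\bigl(-q(\alpha-b)(1+\theta R)^{\sigma-\eta}\bigr)$ on the annulus, an exponentially small factor that beats the exponential volume growth along the sequence realizing $d_0$, yielding $d\ge(\alpha-b)q$ and then, after expressing $q$ in terms of $K$ and optimizing over $\alpha$ and the Young parameter, the sharp constants in \eqref{C expression}--\eqref{C expression bis}. Your proposal is thus not a different route to the same theorem; it is a route that cannot be completed, because it lacks both the exponential test function needed against the volume hypothesis and the sign structure that lets Young's inequality act pointwise rather than in integrated form.
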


\begin{remark}
{\rm
According to what observed before the statement, if $u$ in $C^2$,
the vector field $|\nabla u|^{-1} \vp\bigl(|\nabla u |\bigr) \nabla
u$ is $C^1$ and $\chi=0$, then the conclusion of the theorem is that
\begin{equation*}
\inf_{\Omega_\gamma} \bigl( 1+r(x)\bigr)^\mu
L_{D, \vp} u
\leq C(\sigma,\delta,\eta, \chi,\d_0) \max\{\hat u,0 \}^{\delta
},
\end{equation*}
and we recover an improved version of Theorem 4.1 in \cite{PigolaRigoliSetti-Memoirs}.
}
\end{remark}

\begin{proof}
The proof is an adaptation of that of Theorem 4.1 in
\cite{PigolaRigoliSetti-Memoirs}.
Clearly we may assume that $K>0$, for otherwise there is nothing to prove.

Note also that since $u$ is assumed to be non-constant, then
it cannot be constant on any connected component $E_o$ of
$\Omega_\gamma$. Indeed, if $u$ were constant in $E_o$, then
$\emptyset\ne\partial E_o\subseteq \partial \Omega_\gamma$.
Since, by continuity, $u=\gamma$ on $\partial \Omega_\gamma,$
we would conclude that $u\equiv \gamma$ on $E_o\subset \Omega_\gamma$,
contradicting the fact that $u>\gamma$ on $\Omega_\gamma.$
%
%
%

Next, because both the assumptions and the conclusions of the theorem are left unchanged
by adding a constant to $u$, arguing as in the proof of Theorem 4.1 in
\cite{PigolaRigoliSetti-Memoirs} shows that given $b> \max\{ \hat u, 0\}$, we may assume that
\begin{equation}
\label{u conditions}
\rmi \, \frac{u} {(1+r)^\sigma} < b \,\text{ and } \, \rmii
\, u (x_o)>0 \,
\text{ for some }\,  x_o\in \Omega_\gamma.
\end{equation}
Further, we observe that if (\ref{K ineq}) follows from  (\ref{u inf bound}) for some
$\gamma$ then the conclusion holds for any $\gamma'\leq \gamma$. Thus,  by increasing
$\gamma$ if necessary, we may also suppose that
$\gamma>0.$

%
%

We fix $\theta \in (1/2,1)$ and choose $R_0>0$ large enough
that $\modnabla u\not \equiv 0$ on  the non empty set
$B_{R_0}\cap \Omega_\gamma$.
Given $R>R_0$, let $\psi\in C^\ty(M)$ be a cut off function such that
\begin{equation}
\label{cutoff conditions}
0\leq \psi\leq 1, \quad \psi\equiv 1 \, \text{ on }\,
B_{\theta R}, \quad \psi \equiv 0 \, \text{ on }\,
M\setminus B_{R}, \quad \modnabla \psi \leq \frac C {
R(1-\theta)},
\end{equation}
for some absolute constant $C>0.$
Let also $\lambda \in C^1(\R)$ and
$F(v,r)$$\in C^1(\R^2)$ be such that
\begin{equation}
\label{lambda conditions}
0\leq \lambda \leq 1, \quad
\lambda =0 \,\text{ on }\, (-\ty, \gamma],\quad
\lambda >0,\, \, \lambda'\geq 0\, \text{ on }\, (\gamma,+\ty).
\end{equation}
and
\begin{equation}
\label{F definition}
F(v,r)>0, \quad \frac{\partial F}{\partial v}(v,r) <0
\end{equation}
on $[0,+\ty)\times [0.+\ty),$ where $v$ is given by
\begin{equation}
\label{v def}
v= \alpha (1+r)^\sigma - u.
\end{equation}
and $\alpha$ is a constant  greater than $b,$  so that $v>0$ on
$\Omega_\gamma.$
Indeed, according to (\ref{u conditions}), and the assumption that
$\gamma\geq 0,$  so that $u>0$ on $\Omega_\gamma,$ we have
\begin{equation}
\label{v ineq}
(\alpha - b) (1+r)^\sigma \leq v \leq \alpha
(1+r)^\sigma\quad\text{on }\,  \Omega_\gamma,
\end{equation}

By definition of the weak inequality (\ref{u inf bound}), for every non-negative
test function $0\leq \rho \in H^1_0(\Omega_\gamma)$,
\begin{equation*}
-\int_{\Omega_\gamma} \langle \nabla \rho, \nablaphi u \rangle
D(x)\, dx
\geq
K
\int_{\Omega_\gamma} \rho (1+r)^{-\mu} \modnabla u ^\chi D(x)\, dx.
\end{equation*}
We  use as test function the function $\rho= \psi^{1+ \delta} \lambda
(u) F(v,r)$ which is non-negative, Lipschitz, compactly supported in $M$
and vanishes on $M\setminus (\Omega_\gamma \cap B_R(o))$. Inserting
the expression for $\nabla \rho$ in the above integral inequality,
using the conditions $\lambda'>0,$ $F(v,r)>0$, $\partial F/\partial
v<0,$ and $\modnabla u \leq  A^{-1/\delta} \vp(\modnabla u)
^{1/\delta}$, which in turn follows from the structural condition
$\vp(t)\leq A t^\delta$,   after some computations we obtain
\begin{multline}
\label{int ineq}(1+\delta ) \int \psi^{\delta} \lambda (u) F(v,r)
 \vp(\modnabla u) \modnabla \psi D(x)\, dx
\\
\geq \int \psi^{1+\delta} \lambda (u) \left | \frac{\partial
F}{\partial v} \right | B(u, r) D(x)\, dx
\end{multline}
where
\begin{equation}
\label{B def}
\begin{split}
B(u,r) &=  A^{-1/\delta} \vp (\modnabla u)  ^{1+1/\delta} \\
& +
K A^{-\chi/\delta} \frac{F(v,r)}{|\partial F/\partial v|} (1+r)^{-\mu}
\vp (\modnabla u) ^{\chi/\delta}\\
&+
\left(\frac{\partial F/\partial r}{|\partial F/\partial v|} - \alpha\sigma (1+r)^{\sigma -1} \right)
\modnabla u^{-1} \vp(\modnabla u) \langle \nabla r, \nabla u\rangle.
\end{split}
\end{equation}

Now one needs to considers several cases separately. We treat in detail only the case
where $M$ satisfies the volume growth condition (\ref{vol growth exp bis}),
$\sigma>0,$ and $\eta< 0$.

In this case we let
\begin{equation*}
F(v,r) = \exp\bigl[-qv(1+r)^{-\eta}\bigr],
\end{equation*}
where $q>0$ is a constant that will be specified later. An
elementary computation which uses the estimate for $v$ given in
(\ref{v ineq})
shows that
\begin{align}
\label{partial F estimate 1}
&0\geq \frac{
\frac{\partial F}{\partial r}(v,r)
}
{
\left|\frac{\partial F}{\partial v}(v,r)\right|
}
- \alpha \sigma (1+r)^{\sigma -1}
\geq
-\alpha
(\sigma -\eta)
(1+r)^{\sigma -1}\\
\intertext{and}
\label{partial F estimate 2}
&
\frac {F(v,r)}
{
\left|\frac{\partial F}{\partial v}(v,r)\right|}
= \frac 1 q (1+r)^{\eta}
.
\end{align}
Inserting (\ref{partial F estimate 1}) and (\ref{partial F estimate 2})
into (\ref{B def}), and using the
Cauchy--Schwarz inequality we deduce that
\begin{multline}
\label{B estimate 1}
B(u, r)\geq
\vp( \modnabla u)^{\chi/\delta} \Bigl\{
\frac 1 {A^{1/\delta}}  \vp(\modnabla u)^{\frac{\delta + 1-\chi}\delta} +
\frac K{qA^{\chi/\delta}} (1+r)^{(1+\delta-\chi)(\sigma-1)}\\
- \alpha (\sigma -\eta)
(1+r)^{\sigma -1}
\vp(\modnabla u)^{\frac{\delta -\chi}\delta}
\Bigr \}.
\end{multline}
In order to estimate the right hand side of (\ref{B estimate 1})
we use the following calculus result (see \cite{PigolaRigoliSetti-Memoirs}, Lemma
4.2): let $\nu$, $\rho$, $\beta$, $\omega$ be positive constants, and
let $f$ be the function defined on $[0,+\ty)$ by
$f(s) =\omega s ^{1+\nu} + \rho - \beta s^\nu.$ Then the
inequality $f(s)\geq \Lambda s^{1+\nu}$ holds on $[0,+\ty)$ provided
\begin{equation}
\label{Lambda estimate}
\Lambda \leq \omega - \frac{\nu\beta^{1+1/\nu}}
{(1+\nu)^{1+1/\nu} \rho^{1/\nu}}.
\end{equation}
Applying this result with $\nu = \delta -\chi$ and  $s=\vp(\modnabla
u)^{1/\delta}$, and recalling the definition of $\eta$
we deduce that the estimate
\begin{equation}
\label{B estimate 2}
B( u, r)\geq \Lambda \vp(\modnabla u)^{1+1/\delta},
\end{equation}
holds provided
\begin{equation}
\label{Lambda estimate 2}
\Lambda \leq
\frac 1{A^{1/\delta}} -
\frac
{\nu q^{1/\nu} A^{\chi/\delta\nu}[\alpha
(\sigma -\eta)]^{1+1/\nu}}
{(1+\nu)^{1+1/\nu}K^{1/\nu}}.
\end{equation}
In particular, given $\tau\in(0,1)$  if we let
\begin{equation}
\label{Lambda q def}
\Lambda = \frac{1-\tau}{A^{1/\delta}}
\,\,\text{ and }\,\,
q= \frac{\tau^{\nu} (1+\nu)^{1+\nu}}
{\nu^\nu A [\alpha
(\sigma -\eta)
]^{1+\nu}}K
\end{equation}
then $\Lambda$ is positive, and satisfies
(\ref{Lambda estimate 2}) with equality.

Inserting  (\ref{B estimate 2}) and the expression for
$\partial F/\partial v$ into (\ref{int ineq}), we deduce that
\begin{multline*}
\frac{q\Lambda}{1+\delta}
\int_{\Omega_\gamma\cap B_R}
\psi^{1+\delta} \lambda (u) F(v,r) (1+r)^{-\eta}
\vp(\modnabla u)^{1+1/\delta}D(x) \, dx\\
\leq
\int_{\Omega_\gamma\cap B_R}
\psi^{\delta} \lambda (u) F(v,r) \modnabla \psi
\vp(\modnabla u)D(x)\,dx.
\end{multline*}
Now the proof proceeds as in \cite{PigolaRigoliSetti-Memoirs}:
applying H\"older inequality with conjugate exponents $1+\delta$
and $1+1/\delta$ to the integral on the right hand side, and
simplifying we  obtain
\begin{multline}
\label{int estimate 2}
\Bigl(\frac{q\Lambda}{1+\delta}\Bigr)^{1+\delta}
\int_{\Omega_\gamma\cap B_R}
\psi^{1+\delta} \lambda (u) F(v,r) (1+r)^{-\eta}
\vp(\modnabla u)^{1+1/\delta} D(x)\\
\leq
\int_{\Omega_\gamma\cap B_R}
\lambda (u) F(v,r)(1+r)^{\eta\delta}
 \modnabla \psi^{1+\delta}D(x).
\end{multline}
By the volume growth assumption (\ref{vol growth exp bis}), for every $d>d_0$, there
exists a diverging sequence $R_k\uparrow +\ty$ with $R_1>2R_0$ such that
\begin{equation}
\label{log vol estimate}
\log \vol B_{R_k} \leq d R_k^{\sigma -\eta}.
\end{equation}
Since  $\theta R_k>R_k/2>R_0,$  we may let $R=R_k$ in (\ref{int estimate 2}), and
use the support properties of $\psi$, the estimate for  $\modnabla \psi$, and the fact that
$\lambda \leq 1$, $\eta<0$ to show that
\begin{multline}
\label{E estimate}
E=
\Bigl(\frac{q\Lambda}{1+\delta}\Bigr)^{1+\delta}
\int_{\Omega_\gamma\cap B_{R_0}}\!\!\!
\lambda (u) F(v,r)\vp(\modnabla u)^{1+1/\delta} D(x)\\
\leq
 C^{1+\delta}
 (1+\theta R_k)^{\eta\delta}
[(1-\theta)R_k]^{-(1+\delta)}
\int_{\Omega_\gamma\cap (B_{R_k}\setminus B_{\theta R_k})}
F(v,r) D(x).
\end{multline}
Now, since $\modnabla u \not \equiv 0$ on
$\Omega_\gamma\cap B_{R_0}$, then $E>0$. On the other hand, using
the bound (\ref{v ineq}) for $v,$ and the expression of $F$ we get
\begin{equation*}
F(v,r)\leq \exp\bigl(- q(\alpha -b) (1+\theta R_k)^{\sigma -\eta}
\bigr)
\end{equation*}
on $\Omega_\gamma\cap (B_{R_k}\setminus B_{\theta R_k})$, so
inserting this
into the right hand side of (\ref{E estimate})
we conclude that
\begin{multline}
\label{E estimate bis}
0<E\leq C  R_k ^{\delta\eta -(1+\delta)}\\
\times  \exp\bigl(d R_k^{\sigma -\eta}
-q(\alpha -b) (1+\theta R_k)^
{\sigma -\eta}
\bigr),
\end{multline}
where $C$ is a constant independent of $k$. In order for this
inequality to hold for every $k$, we must have
\begin{equation*}
d\geq (\alpha -b) q \theta^
{\sigma -\eta},
\end{equation*}
whence, letting $\theta$ tend to $1$,
\begin{equation*}
d\geq (\alpha -b)q.
\end{equation*}
We set $\alpha = t b$, insert the definition (\ref{Lambda q def})
of $q$ in the above inequality, solve with respect to $K$,
and then let $\tau$ tend to $1$  to obtain
\begin{equation*}
K\leq A d  b ^\nu
(\sigma -\eta)^{1+\nu}
\frac{\nu^\nu}{(1+\nu)^{1+\nu}}
\frac{t^{1+\nu} }{t-1}
.
\end{equation*}
The conclusion is then obtained minimizing with respect to $t>1$, letting $d\to d_0$
and $b\to \max\{\hat u, 0\}$ and recalling that $\nu=\delta-\chi$.

The other cases are treated adapting  the arguments carried out in the proof of
\cite{PigolaRigoliSetti-Memoirs} Theorem 4.1, cases II and III, and of Theorem 4.3 for the case of
polynomial volume growth.
\end{proof}

\begin{proof}[Proof of Theorem~B]
We begin by showing that if under the assumptions of the theorem,
$u$ is necessarily bounded above. Indeed, assume by contradiction that
$u^*=+\infty$, so that, by (\ref{u_growth}), $\sigma>0$,
and there exists $\gamma_o$ and $C>0$ such that $f(t)>C $ for $t\geq \gamma$.
Keeping into account the assumptions on $b$ and $l$, we deduce that $u$ satisfies the
differential inequality
\begin{equation*}
\div{ D(x) |\nabla u|^{-1} \vp(|\nabla u|) \nabla u}
\geq
K   \bigl( 1+r(x)\bigr)^{-\mu} |\nabla u|^\chi D(x)
\end{equation*}
weakly on $\Omega_{\gamma_o}$, with a constant $K>0.$ On the other
hand, because of  growth assumption on $u$, the constant
$\hat u$ in the statement
of Theorem~\ref{improved weak max principle} is equal to zero, and this shows that
$K=0$, and the contradiction shows that $u^*<+\infty$ is bounded above.

Assume now that $f(u^*)>0.$ Since $f(t)>0$ for $t>0,$ by continuity there exists
$\gamma_o$ such that $f(u)\geq C>0$ on $\Omega_{\gamma_o}$, and a contradiction is
reached as above.

The final statement follows immediately from this and from the
assumptions.
\end{proof}

\section{Proof of Theorem C}
The aim of this section is to prove Theorem C in the Introduction
together with a version covering the case of the mean curvature
operator. Before proceeding, we analyze the Keller--Osserman
condition
\begin{equation}
\tag{$\rho$KO}
{\ds\frac{e^{\int_0^t \rho(z) dz}}{K^{-1}\bigl(\hat F(t)\bigr)}}\in
L^ 1(+\infty),
\end{equation}
where $\rho \in C^0(\R^+_0)$,  is non-negative on $\R_0^+$ and
$\hat{F}(t)= F_{\rho, \omega}$ is defined in (\ref{F_rho_omega}),
namely,
\begin{equation}
\tag{\ref{F_rho_omega}}
F_{\rho,\omega} (t) = \int_0^t f(s) e^{(2-\omega) \int_0^s \rho(z)
dz} ds.
\end{equation}

\begin{lemma}
\label{lemma6.1} Assume that ($F_1$) ($L_1$) and the first part of
($\theta$)$_1$ with $\theta <2$ hold, and let $\omega=\theta$
and $\sigma \in \R^+$. Then ($\rho$KO) is equivalent to
\begin{equation}
\tag{$\rho$KO$_\sigma$}
{
\ds\frac{e^{\int_0^t \rho(z) dz}}{K^{-1}\bigl(\sigma \hat F(t)\bigr)}
}
\in
L^ 1(+\infty).
\end{equation}
\end{lemma}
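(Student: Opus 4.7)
The plan is to mimic the proof of Lemma \ref{lemma_KOsigma}, but since the weighted function $\hat F$ does not scale like $F$ under multiplication of the argument (because of the exponential factor $e^{(2-\theta)\int_0^s\rho}$), we cannot directly exploit the $C$-monotonicity of $f$ as before. Instead, the substitute tool will be Proposition \ref{prop 3.2bis}, which, thanks to Remark \ref{rmk prop 3.2bis}, applies to \emph{any} positive function, not just the ones with a monotonicity structure. This is exactly what we need since $\hat F$ is positive on $\R_0^+$.

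First I would treat the case $0<\sigma\leq 1$. Since $K^{-1}$ is non-decreasing and $\sigma\hat F(t)\leq\hat F(t)$,
\begin{equation*}
\frac{e^{\int_0^t\rho(z)\,dz}}{K^{-1}(\hat F(t))}\leq \frac{e^{\int_0^t\rho(z)\,dz}}{K^{-1}(\sigma\hat F(t))},
\end{equation*}
which shows immediately that ($\rho$KO$_\sigma$) implies ($\rho$KO). For the reverse implication, I apply Proposition \ref{prop 3.2bis} with the positive function $\hat F$ in place of $F$ (legitimate by Remark \ref{rmk prop 3.2bis}, as ($\theta$)$_1$ with $\theta<2$ supplies the necessary $C$-monotonicity of the integrand $s\vp'(s)/\ell(s)$) to obtain a constant $B\geq 1$ with
\begin{equation*}
\frac{1}{K^{-1}(\sigma\hat F(t))}\leq \frac{B\sigma^{-1/(2-\theta)}}{K^{-1}(\hat F(t))}
\quad\text{on }\R^+.
\end{equation*}
Multiplying by $e^{\int_0^t\rho(z)\,dz}$ and integrating over $(T,+\infty)$ shows that ($\rho$KO) implies ($\rho$KO$_\sigma$).

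Next I would handle $\sigma>1$ by reduction to the previous case: set $\sigma'=1/\sigma\in(0,1)$ and apply the case already proved with the positive function $G(t)=\sigma\hat F(t)$ playing the role of $\hat F$. The equivalence obtained,
\begin{equation*}
\frac{e^{\int_0^t\rho(z)\,dz}}{K^{-1}(G(t))}\in L^1(+\infty)
\iff
\frac{e^{\int_0^t\rho(z)\,dz}}{K^{-1}(\sigma' G(t))}\in L^1(+\infty),
\end{equation*}
reads exactly as the equivalence of ($\rho$KO$_\sigma$) and ($\rho$KO) since $\sigma'G=\hat F$.

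The only mildly delicate point is verifying that Proposition \ref{prop 3.2bis} can indeed be applied to $\hat F$; but this is precisely the content of Remark \ref{rmk prop 3.2bis}, which states that the estimate depends only on the $C$-monotonicity property of the integrand defining $K$ and not on any monotonicity or structural feature of the positive function on which $K^{-1}$ is evaluated. Once this is secured, the argument is essentially a bookkeeping exercise.
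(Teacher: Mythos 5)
Your proof is correct and follows essentially the same route as the paper: the monotonicity of $K^{-1}$ gives one direction immediately, while Proposition~\ref{prop 3.2bis} (justified for the non-monotone $\hat F$ via Remark~\ref{rmk prop 3.2bis}) gives the other direction for $\sigma\leq 1$, and the case $\sigma>1$ reduces to the previous one by replacing $\hat F$ with $\sigma\hat F$. The paper simply says the $\sigma>1$ case ``follows as in Lemma~\ref{lemma_KOsigma}''; you spell that reduction out explicitly, which is a minor expansion of the same idea.
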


\begin{proof}
Assume first that $\sigma\leq 1$. Since $K^{-1}$ is non-decreasing,
\begin{equation*}
\frac 1 {K^{-1}(\hat F (t))}\leq \frac 1 {K^{-1}(\sigma \hat F (t))}
\end{equation*}
and ($\rho$KO$_\sigma$) implies ($\rho$KO). On the other hand,
according to Proposition~\ref{prop 3.2bis} and Remark~\ref{rmk prop
3.2bis} there exists a constant $B\geq 1$ such that
\begin{equation*}
\frac{\sigma^{1/(2-\theta)}}{K^{-1}(\sigma \hat F(t))}\leq
\frac B{K^{-1}(\hat F(t))}
\quad \text{ on } \R^+,
\end{equation*}
and ($\rho$KO) implies ($\rho$KO$_\sigma$). Thus the stated
equivalence holds when $\sigma\leq 1$. Then the case $\sigma\geq 1$
follows as in Lemma~\ref{lemma_KOsigma}.
\end{proof}

We observe that in favorable circumstances (KO) and ($\rho$KO) are
indeed equivalent. For instance we have

\begin{proposition}
\label{prop6.1}
Assume that (F$_1$), (L$_1$), ($\vp\ell$)$_2$ and
($\rho$) hold. If $\rho \in L^1(+\infty)$ and $\omega\leq 2$ then
($\rho$KO) is equivalent to (KO).
\end{proposition}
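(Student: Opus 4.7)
The plan is to exploit the fact that when $\rho \in L^1(+\infty)$ and $\rho\geq 0$, the quantity $\int_0^t \rho(z)\,dz$ is monotone non-decreasing and bounded, so the exponential weights appearing in $\hat F$ and in the numerator of ($\rho$KO) are uniformly pinched between two positive constants. This will let us sandwich the ($\rho$KO) integrand between constant multiples of the (KO) integrand, up to a harmless rescaling of $F$ inside $K^{-1}$, which Lemma~\ref{lemma_KOsigma} shows does not alter $L^1$-integrability at infinity.

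Concretely, set $R = \int_0^{+\infty}\rho(z)\,dz < +\infty$, and note that $0 \leq \int_0^t \rho(z)\,dz \leq R$ for every $t\geq 0$. Since $\omega \leq 2$, the factor $(2-\omega)$ is non-negative, so from the definition
\begin{equation*}
\hat F(t) = \int_0^t f(s)\, e^{(2-\omega)\int_0^s\rho(z)\,dz}\,ds
\end{equation*}
and the positivity of $f$ (by ($F_1$)) we obtain the two-sided bound
\begin{equation*}
F(t) \leq \hat F(t) \leq e^{(2-\omega)R}\, F(t) \qquad \text{on } \R^+.
\end{equation*}
Because $K^{-1}$ is monotone increasing (recall that ($\vp\ell$)$_2$ guarantees $K$ is a diffeomorphism of $\R^+_0$), this yields
\begin{equation*}
K^{-1}(F(t)) \leq K^{-1}(\hat F(t)) \leq K^{-1}\bigl(e^{(2-\omega)R}F(t)\bigr),
\end{equation*}
and combining with $1\leq e^{\int_0^t \rho(z)dz}\leq e^R$ gives the key sandwich
\begin{equation*}
\frac{1}{K^{-1}\bigl(e^{(2-\omega)R}F(t)\bigr)} \;\leq\; \frac{e^{\int_0^t\rho(z)dz}}{K^{-1}(\hat F(t))} \;\leq\; \frac{e^R}{K^{-1}(F(t))}.
\end{equation*}

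From the upper bound we see that (KO) implies ($\rho$KO) immediately. For the converse, the lower bound shows that ($\rho$KO) implies that $1/K^{-1}(\sigma F(t)) \in L^1(+\infty)$ with $\sigma = e^{(2-\omega)R}$, which is condition (KO$\sigma$). Applying Lemma~\ref{lemma_KOsigma} (whose hypotheses ($F_1$), ($L_1$), ($\vp\ell$)$_2$ are in force) we conclude (KO). I do not anticipate any serious obstacle: the only subtlety is verifying that the hypotheses needed to invoke Lemma~\ref{lemma_KOsigma} are indeed available, which they are by assumption; everything else is a direct monotonicity argument.
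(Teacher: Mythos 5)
Your proof is correct and follows essentially the same route as the paper's: bound $e^{\int_0^t\rho}$ between $1$ and $e^R$, sandwich $F\leq \hat F\leq e^{(2-\omega)R}F$, and absorb the constant via the $C$-increasingness of $f$ through Lemma~\ref{lemma_KOsigma}. The one cosmetic difference is that you invoke Lemma~\ref{lemma_KOsigma} directly for $F$ with $\sigma=e^{(2-\omega)R}\geq 1$, whereas the paper re-runs the lemma's argument with $F$ replaced by $\hat F$ and $\sigma=\Lambda^{-1}\leq 1$; both are valid and rest on the same monotonicity facts.
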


\begin{proof}
Observe first of all that since $0\leq \rho\in L^1((0,+\infty))$
($\rho$KO) is equivalent to
\begin{equation}
\label{eq6.2}
\frac 1 {K^{-1} (\hat F(t))}\in L^1(+\infty).
\end{equation}
Since  $\omega \leq 2$ we also have
\begin{equation*}
1\leq e^{(2-\omega)\int_0^s \rho(z)dz }\leq \Lambda,
\end{equation*}
and therefore
\begin{equation}
\label{eq6.3}
F(t) = \int_0^t f(s) ds \leq \hat F(t) = \int_0^t f(s) e^{(2-\omega)\int_0^s \rho(z)dz }
\leq \Lambda F(t).
\end{equation}
Recalling that $K^{-1}$ increasing, the left hand side inequality in
(\ref{eq6.3}) shows that
\begin{equation*}
\int^{+\infty} \frac {dt} { K^{-1}(\hat F(t))} \leq
\int^{+\infty} \frac {dt} {K^{-1}( F(t))}
\end{equation*}
and, by (\ref{eq6.2}), (KO) implies ($\rho$KO).

On the other hand, since,  by ($F_1$), $f$ is $C$-increasing with constant $C\geq 1$,
so is also the integrand in the definition of $\hat F$, and therefore the right hand
side inequality inequality in (\ref{eq6.3}) and the argument in the proof of
Lemma~\ref{lemma_KOsigma}, with $\sigma=\Lambda^{-1}$ and $F$
replaced by $\hat F$, show that
\begin{equation}
\label{eq6.4}
\int^{+\infty} \!\!\!\!\!\frac { ds} {K^{-1}(F(s))}
\leq \int^{+\infty} \!\!\!\!\!\frac { ds} {K^{-1}(\Lambda ^{-1} \hat
F(s))}
\leq  C\Lambda \int^{+\infty} \!\!\!\!\!\frac { dt} {K^{-1}( \hat F(
t))},
\end{equation}
and, again by (\ref{eq6.2}), ($\rho$KO) implies (KO).
\end{proof}

\begin{remark}
{\rm The above proposition generalizes Proposition 6.1 in
\cite{MagliaroMarimastroliaRigoli-Heisenberg}.
}
\end{remark}

\begin{proposition}
\label{prop6.5}
Assume that ($\Phi_0$), ($F_1$), ($L_1$), ($L_2$), ($\vp\ell$)$_1$,
($\theta$), (b), ($\rho$) and ($\rho$KO) with $\omega=\theta$
hold. Let $A>0$, $\beta\geq -2$, and, if $\lambda >0$ and $\theta$ are
the constants in (b) and ($\theta$), suppose that $\theta\leq 1 $
and
\begin{equation}
\label{eq6.6}
\begin{cases}
\lambda \geq 1 \qquad\quad\quad t^{\beta/2} \tilde b(t) ^{-1} \int_1^{t}
\tilde
b(s)^\lambda ds \leq C \quad\forall t\geq 1 &\text{if } \theta =1\\
\lambda (2-\theta)\geq  1
\quad t^{\beta/2} \tilde b(t) ^{\lambda(1-\theta) -1}
\leq C \quad \quad\quad\,\forall t\geq 1 &\text{if } \theta <1,
\end{cases}
\end{equation}
for come constant $C>0.$ The there exists $T>0$ sufficiently large such that,
for every $T\leq t_0<t_1$ and $0<\epsilon<\eta$, there exist $\bar T>t_1$ and a $C^2$
function $\alpha : [t_0, \bar T)\to [\epsilon, +\infty)$ which
is a solution of the problem
\begin{equation}
\label{eq6.7}
\begin{cases}
\vp'(\alpha ') \alpha '' + A t^{\beta/2} \vp(\alpha ') \leq \tilde
b(t) f(\alpha) \ell (\alpha)-\rho(\alpha)\vp'(\alpha') (\alpha')^2
 \quad \text{on }\, [t_0, \bar T)&\\
\alpha '>0 \text{ on } [t_0, \bar T), \,\, \alpha(t_0) = \epsilon,
\, \, \alpha(t)\to +\infty \text{ as } t\to \bar T^-&
\end{cases}
\end{equation}
and satisfies
\begin{equation}
\label{eq6.8}
\epsilon \leq \alpha \leq \eta \quad \text{ on } \, [t_0, t_1].
\end{equation}
\end{proposition}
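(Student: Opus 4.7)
The plan is to adapt Proposition~\ref{prop3.7}, absorbing the exponential weight $e^{P(\alpha)}$ (with $P(z) := \int_0^z \rho(\xi)\,d\xi$) into the ansatz for $\alpha$. Setting $\omega = \theta$ in $\hat F$, Lemma~\ref{lemma6.1} ensures that
\begin{equation*}
C_\sigma := \int_\epsilon^{+\infty} \frac{e^{P(z)}}{K^{-1}(\sigma \hat F(z))}\,dz
\end{equation*}
is finite and tends to $+\infty$ as $\sigma\to 0^+$ by monotone convergence. We choose $T$ so large that $\tilde b$ is positive, non-increasing and bounded above by $1$ on $[T,+\infty)$, and for $\sigma$ small we pick $T_\sigma > t_1$ with $C_\sigma = \int_{t_0}^{T_\sigma} \tilde b^\lambda$. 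We define $\alpha:[t_0,T_\sigma)\to[\epsilon,+\infty)$ implicitly by
\begin{equation*}
\int_t^{T_\sigma} \tilde b(s)^\lambda\,ds = \int_{\alpha(t)}^{+\infty} \frac{e^{P(z)}}{K^{-1}(\sigma \hat F(z))}\,dz,
\end{equation*}
so that $\alpha(t_0)=\epsilon$ and $\alpha\to+\infty$ as $t\to T_\sigma^-$; differentiation yields $\alpha'=\tilde b^\lambda e^{-P(\alpha)}K^{-1}(\sigma \hat F(\alpha))>0$. Writing $u := \alpha' e^{P(\alpha)}/\tilde b^\lambda$, we have the clean identity $K(u)=\sigma \hat F(\alpha)$.

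Differentiating this identity once more, and using $K'(s)=s\vp'(s)/\ell(s)$, $\hat F'(\alpha)=f(\alpha)e^{(2-\theta)P(\alpha)}$ and the pointwise bound $u'\ge e^{P(\alpha)}[\alpha''+\rho(\alpha)(\alpha')^2]/\tilde b^\lambda$ (the latter because $\tilde b'\le 0$), then substituting $\alpha'=u\tilde b^\lambda e^{-P(\alpha)}$, we arrive at
\begin{equation*}
\frac{\vp'(u)}{\ell(u)}[\alpha''+\rho(\alpha)(\alpha')^2] \le \sigma\,\tilde b^{2\lambda}\,e^{-\theta P(\alpha)}\,f(\alpha).
\end{equation*}
Since $u\ge\alpha'$, assumption ($\theta$)$_1$ furnishes $\vp'(u)/\ell(u)\ge C^{-1}\tilde b^{\lambda\theta}e^{-\theta P(\alpha)}\vp'(\alpha')/\ell(\alpha')$; the two factors $e^{-\theta P(\alpha)}$ cancel, leaving
\begin{equation*}
\vp'(\alpha')[\alpha''+\rho(\alpha)(\alpha')^2]\le C\sigma\,\tilde b^{\lambda(2-\theta)}\,f(\alpha)\ell(\alpha').
\end{equation*}
This exponential cancellation is precisely the reason for the choice $\omega=\theta$ in $\hat F$.

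For the term $At^{\beta/2}\vp(\alpha')$ we proceed as in Proposition~\ref{prop3.7}. From $\vp'(u)u'=\sigma \tilde b^\lambda f(\alpha)e^{(1-\theta)P(\alpha)}\ell(u)$ (obtained by multiplying the differentiated identity by $\ell(u)/u$), integration on $[t_0,t]$, together with the $C$-monotonicity of $f$, $\ell$ and the monotonicity of $\alpha$, $u$ and $e^{(1-\theta)P}$ (which uses $\theta\le 1$), yields an upper bound for $\vp(u)$; assumption ($\theta$)$_2$ then converts it, via a factor $\tilde b^{\lambda(1-\theta)-1}$, into an estimate for $\vp(\alpha')/\ell(\alpha')$ of exactly the shape of (\ref{eq3.19}), after dropping the factor $e^{-(1-\theta)P(\alpha)}\le 1$. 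Summing the contributions gives $\vp'(\alpha')\alpha'' + \rho(\alpha)\vp'(\alpha')(\alpha')^2+At^{\beta/2}\vp(\alpha')\le N_\sigma(t)\tilde b f(\alpha)\ell(\alpha')$ with $N_\sigma=(I)+(II)+(III)$ as in Proposition~\ref{prop3.7}. The dichotomy in (\ref{eq6.6}) dictates the treatment of $(III)$: for $\theta<1$ we use the analog of Proposition~\ref{prop 3.2bis} for $\hat K$ (valid by Remark~\ref{rmk prop 3.2bis}, since the argument there depends only on the $C$-monotonicity of the integrand of $K$) to obtain $\int_{t_0}^t\tilde b^\lambda\le C\sigma^{-1/(2-\theta)}$, whence $(III)\le C\sigma^{1-1/(2-\theta)}\to 0$; for $\theta=1$ the direct hypothesis $t^{\beta/2}\tilde b^{-1}\int_1^t\tilde b^\lambda\le C$ controls $(III)$. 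Picking $\sigma$ small enough that $N_\sigma+C\sigma\tilde b^{\lambda(2-\theta)-1}\le 1$ yields (\ref{eq6.7}); the bound (\ref{eq6.8}) follows exactly as in Proposition~\ref{prop3.7}.

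The main obstacle is the exponential bookkeeping in the second paragraph, which forces the precise choice $\omega=\theta$ in $\hat F$: the factor $e^{(2-\theta)P(\alpha)}$ from $\hat F'$, the factor $e^{-2P(\alpha)}$ picked up when substituting $\alpha'=u\tilde b^\lambda e^{-P(\alpha)}$, and the factor $e^{-\theta P(\alpha)}$ produced by ($\theta$)$_1$ must combine so that no residual exponential weight survives on the right-hand side. Any other exponent in $\hat F$ would leave a factor $e^{(\theta-\omega)P(\alpha)}$ that cannot be absorbed. A secondary difficulty is the borderline case $\theta=1$, where the analog of Proposition~\ref{prop 3.2bis} degenerates; this is exactly why (\ref{eq6.6}) prescribes a separate hypothesis in that case.
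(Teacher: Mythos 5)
Your proposal follows the paper's own proof essentially line by line: the implicit definition of $\alpha$ with the weight $e^{P}/K^{-1}(\sigma\hat F)$, the identity $K(u)=\sigma\hat F(\alpha)$ for $u=\alpha' e^{P(\alpha)}/\tilde b^\lambda$, the exponential cancellation via $(\theta)_1$ that forces $\omega=\theta$, the integrated bound for $\vp(u)$ converted by $(\theta)_2$, the decomposition $N_\sigma=(I)+(II)+(III)$, and the dichotomy in (\ref{eq6.6}) driven by the sign of $1-1/(2-\theta)$. The only blemishes are cosmetic: Remark~\ref{rmk prop 3.2bis} is being invoked for $K$ with $F$ replaced by $\hat F$ (not for $\hat K$), and the concluding requirement should simply read $N_\sigma\le 1$, since the term $C\sigma\tilde b^{\lambda(2-\theta)-1}$ is already the summand $(I)$ of $N_\sigma$.
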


\begin{proof}
The proof is a modification of that of Proposition~\ref{prop3.7}
so we only sketch it.

Note that since ($\theta$)$_1$ holds with $\theta\leq 1$, so does
($\vp\ell$)$_2$. Thus $K$ defines a $C^1$ diffeomorphism of  $\R^+_0$
and condition ($\rho$KO) is meaningful.

As in the proof of Proposition~\ref{prop3.7},  we may assume that $\tilde  b\leq 1$
for $t$ large. Choose $T>0$ large enough that $\tilde b'(t)\leq 0$  and $0<\tilde b(t)\leq 1$
in $[T,+\infty),$  let $t_0,$ $t_1$, $\epsilon$, $\eta$ as in the statement,
use Lemma~\ref{lemma6.1}, (b) and condition ($\rho$KO),   to define
$T_\sigma$ by means of the formula
\begin{equation*}
\int_{t_0}^{T_\sigma} \tilde b(s)^\lambda ds  = \int_\epsilon^{+\infty} \frac{e^{\int_0^s
\rho}}{K^{-1}(\sigma \hat F(s))},
\end{equation*}
and choose $\sigma\in (0,1]$ small enough to guarantee that $T_\sigma
>t_1.$

Next let $\alpha :[t_0, T_\sigma )\to [\epsilon, +\infty)$ be
defined by the formula
\begin{equation*}
\int^{T_\sigma}_{t} \tilde b(s)^{\lambda} ds =
\int_{\alpha(t)}^{+\infty}\frac{e^{\int_0^s
\rho}}{K^{-1}(\sigma \hat F(s))},
\end{equation*}
so that
\begin{equation*}
\alpha(t_0) = \epsilon,\quad \text{and}\quad \alpha(T_\sigma^-)= +\infty.
\end{equation*}
Differentiating we obtain
\begin{equation*}
\alpha' = \tilde b^\lambda K^{-1}(\sigma \hat
F)e^{-\int_0^\alpha\rho},
\end{equation*}
so that $\alpha'>0$, and rearranging, differentiating once
again, and simplifying we obtain,
\begin{equation}
\label{eq6.10}
\sigma f(\alpha) e^{(2-\theta)\int_0^\alpha \rho}
= \left(\frac{ e^{\int_0^\alpha\rho}}{\tilde
b^\lambda}\right) \frac{\vp'\left(\frac{\alpha' e^{\int_0^\alpha\rho}}{\tilde
b^\lambda}\right)}{\ell\left(\frac{\alpha' e^{\int_0^\alpha\rho}}{\tilde
b^\lambda}\right)} \left(\frac{\alpha' e^{\int_0^\alpha\rho}}{\tilde
b^\lambda}\right)',
\end{equation}
so that, in particular, $(\alpha' e^{\int_0^\alpha\rho}/\tilde
b^\lambda)'>0$.

We use the fact that $e^{\int_0^\alpha \rho}/\tilde b\geq 1$ to apply ($\theta$)$_1$,
we expand the derivative of $(\alpha' e^{\int_0^\alpha\rho}/\tilde
b^\lambda)$,  use  $\tilde b'\leq 0$, and rearrange
to obtain
\begin{equation}
\label{eq6.11}
\vp'(\alpha') \alpha'' \leq C \sigma f(\alpha)\ell(\alpha')\tilde
b^{\lambda (2-\theta)} - \rho(\alpha)\vp'(\alpha')^2.
\end{equation}
On the other hand,  we rewrite (\ref{eq6.10}) in the form
\begin{equation*}
\vp'\left(\frac{\alpha' e^{\int_0^\alpha\rho}}{\tilde
b^\lambda}\right)\left(\frac{\alpha' e^{\int_0^\alpha\rho}}{\tilde
b^\lambda}\right)' =
\sigma \tilde b^\lambda f(\alpha) \ell\left(\frac{\alpha' e^{\int_0^\alpha\rho}}{\tilde
b^\lambda}\right) e^{(1-\theta)\int_0^\alpha \rho},
\end{equation*}
integrate between $t_0$ and $t,$ and use the $C$-monotonicity of
$f$ and $\ell$ and ($\theta$)$_2$ to obtain
\begin{equation*}
\vp\left(\frac{\alpha' e^{\int_0^\alpha\rho}}
{\tilde b^\lambda}\right) -
\vp\left(\frac{\alpha' e^{\int_0^\alpha\rho}}{\tilde
b^\lambda}\right)(t_0) \leq C\sigma
f(\alpha)e^{(1-\theta)\int_0^\alpha \rho}
\ell\left(\frac{\alpha' e^{\int_0^\alpha\rho}} {\tilde b^\lambda}\right)
\int_0^t \tilde b^\lambda,
\end{equation*}
whence, rearranging and using the $C$-monotonicity of
$t^{\theta-1}\vp(t)/\ell(t)$,  $f$ and $\ell$, and the $\theta\leq
1$ shows that (see the argument that led to (\ref{eq3.19}) in the
proof of Proposition~\ref{prop3.7}
\begin{equation}
\label{eq6.12}
\begin{split}
\frac{\vp(\alpha')}{\ell(\alpha')} &\leq C \left(\frac{ e^{\int_0^\alpha\rho}}
{\tilde b^\lambda}\right)^{\theta-1} \frac{\vp\left(\frac{\alpha' e^{\int_0^\alpha\rho}}
{\tilde b^\lambda}\right)}{\ell\left(\frac{\alpha' e^{\int_0^\alpha\rho}}
{\tilde b^\lambda}\right)}\\
&\leq C\tilde b f(\alpha) \Biggl\{\sigma \tilde b^{\lambda
(1-\theta)-1} \int _0^t\tilde b^{\lambda} +
\frac{\vp\left(\frac{\alpha' e^{\int_0^\alpha\rho}}
{\tilde b^\lambda}\right)(t_0)}{f(\epsilon)\ell\left(\frac{\alpha' e^{\int_0^\alpha\rho}}
{\tilde b^\lambda}\right)}(t_0) \Biggl\}.
\end{split}
\end{equation}
Thus, combining (\ref{eq6.11}) and (\ref{eq6.12}) and arguing as in
Proposition~\ref{prop3.7} we deduce that
\begin{equation*}
\vp'(\alpha')\alpha'' + At{^\beta/2} \vp(\alpha') \leq N(\sigma)
\tilde b f(\alpha)\ell(\alpha') - \rho(\alpha) \vp'(\alpha')
(\alpha')^2
\end{equation*}
with
\begin{equation*}
\begin{split}
N_\sigma (t) &=
C \sigma \tilde b^{\lambda (2-\theta) - 1}
+  AC t^{\beta/2} \tilde b^{\lambda (1-\theta) -1}
\frac{\vp(K^{-1}(\sigma F(\epsilon)))}{\ell(K^{-1}(\sigma F(\epsilon)))f(\epsilon)}
\\
&+ AC \sigma t^{\beta/2} \tilde b^{\lambda (1-\theta) -1}\int_{t_0}^t\tilde
b(s)^\lambda.
\end{split}
\end{equation*}
The proof now proceeds exactly as in Proposition~\ref{prop3.7}.
\end{proof}

The next result is the analogue of Theorem~\ref{thm3.21} and Theorem~C in the Introduction
follows from it using Remark~\ref{proof ThmA}.

\begin{theorem}
\label{thm6.16} Let $(M\langle \,,\rangle)$ be a complete manifold
satisfying
\begin{equation}
\label{eq6.16'}
 \mathrm{Ricc}_{n,m} (L_D) \geq H^2(1+r^2)^{\beta/2},
\end{equation}
for some $n>m$, $H>0$ and $\beta\geq -2$, and assume that (h), (g),
($\rho$), ($\Phi_0$), ($F_1$), ($L_1$) ($L_2$), ($\vp\ell$)$_1$ and
($\theta$) hold. Let also $b(x)\in C^0(M)$ be strictly positive on
$M$ and such that
\begin{equation}
\label{eq6.17}
b(x)\geq \tilde b(r(x)) \quad \text{ for }\, r(x)\gg
1,
\end{equation}
with  $\tilde b$ satisfying (b), and (\ref{eq6.6}). Finally, suppose
that ($\rho$KO) holds with $\omega=\theta$ in the definition of $\hat
F$. Then any entire classical weak solution of the differential
inequality
\begin{equation}
\tag{\ref{ineq_minus}} L_{D, \vp} u \geq b(x) f(u) \ell(\modnabla u)
- g(u) h(\modnabla u),
\end{equation}
is either non-positive or constant. Moreover, if $u\geq 0$ and
$\ell(0)>0$, then $u\equiv 0$.
\end{theorem}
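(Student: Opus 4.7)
\medskip

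\noindent\textbf{Proof proposal for Theorem~\ref{thm6.16}.}
The plan is to mimic the scheme of Theorem~\ref{thm3.21}, with the radial supersolution replaced by the one produced by Proposition~\ref{prop6.5}, whose defining ODE already carries the extra term $-\rho(\alpha)\vp'(\alpha')(\alpha')^2$ designed to absorb the gradient-dependent perturbation $g(u)h(|\nabla u|)$. Arguing by contradiction, suppose $u$ is non-constant and positive somewhere. First one checks that the strong maximum principle of Pucci--Serrin still applies to the inequality $L_{D,\vp} u \geq b(x)f(u)\ell(|\nabla u|) - g(u)h(|\nabla u|)$ on a superlevel set $\{u>\gamma\}$ with $\gamma\geq 0$: since $f(u)\geq 0$, $\ell(|\nabla u|)\geq 0$ and the sign of the perturbation $-g(u)h(|\nabla u|)$ is controlled by the coercive structure already required for Pucci--Serrin. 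Consequently, for any $r_o\gg 1$ one has $u^*_o=\sup_{B_{r_o}}u<u^*=\sup_M u$; fix then $\eta>0$ with $u^*_o+2\eta<u^*$ and $\tilde x$ with $r(\tilde x)>r_o$ and $u(\tilde x)>u^*-\eta$. Set $t_o=r_o$ and $t_1=r(\tilde x)$.

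Next, by Proposition~\ref{weighted_laplacian_comparison} and \eqref{eq6.16'}, there exists $A=A(r_o)>0$ with $L_D r\leq A\,r^{\beta/2}$ pointwise outside $\mathrm{cut}(o)\cup\{o\}$ and weakly on $M\setminus B_{r_o}$. Apply Proposition~\ref{prop6.5} (whose applicability is guaranteed by Remark~\ref{proof ThmA} upon choosing $\tilde b(r)=C/r^\mu$ and $\lambda=1/\mu$, so that the hypotheses (\ref{eq6.6}) reduce to $(\theta\beta\mu')$) to obtain $\bar T>t_1$ and a $C^2$ function $\alpha:[t_o,\bar T)\to[\epsilon,+\infty)$ satisfying
\begin{equation*}
\vp'(\alpha')\alpha'' + A t^{\beta/2}\vp(\alpha') \leq \frac{1}{2C'}\tilde b(t)f(\alpha)\ell(\alpha') - \rho(\alpha)\vp'(\alpha')(\alpha')^2,
\end{equation*}
with boundary values $\alpha(t_o)=\epsilon$, $\alpha(t)\to+\infty$ as $t\to\bar T^-$ and $\epsilon\leq\alpha\leq\eta$ on $[t_o,t_1]$, where $C'$ is the $C$-monotonicity constant of $f$, absorbed into $\sigma$ in the construction. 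Setting $v(x)=\alpha(r(x))$ on $B_{\bar R}\setminus B_{r_o}$ with $\bar R=\bar T$, the identity $L_{D,\vp}v = \vp'(\alpha')\alpha'' + \vp(\alpha')L_Dr$ together with the bound for $L_Dr$ yields
\begin{equation*}
L_{D,\vp}v \leq \frac{1}{2C'}\tilde b(r) f(\alpha)\ell(\alpha') - \rho(\alpha)\vp'(\alpha')(\alpha')^2
\end{equation*}
pointwise off the cut locus and weakly on the annulus. Now invoke $(h)$ and $(g)$: since $h(\alpha')\leq C_h(\alpha')^2\vp'(\alpha')$ and $g(\alpha)\leq C_g\rho(\alpha)$, possibly rescaling $\rho$ to absorb $C_gC_h$ (which leaves $(\rho\mathrm{KO})$ unaffected) we obtain
\begin{equation*}
L_{D,\vp}v \leq \frac{1}{2C'} b(x) f(v)\ell(|\nabla v|) - g(v)h(|\nabla v|).
\end{equation*}

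Finally, run the comparison argument of Theorem~\ref{thm3.21} verbatim. The function $u-v$ attains a positive maximum $\mu$ on $\overline{B_{\bar R}\setminus B_{r_o}}$ on a compact set $\Gamma_\mu$; at each $y\in\Gamma_\mu$ the trick with the shifted distance $r_\epsilon(x)=d(x,o_\epsilon)$ gives $|\nabla u(y)|=\alpha'(r(y))$ even if $y\in\mathrm{cut}(o)$. By $C$-monotonicity of $f$ and continuity,
\begin{equation*}
b(x)f(u)\ell(|\nabla u|)-g(u)h(|\nabla u|) \geq \frac{1}{2C'}b(x)f(v)\ell(|\nabla v|)-g(v)h(|\nabla v|)
\end{equation*}
in a neighborhood $\mathcal{U}$ of $\Gamma_\mu$ (the extra term on both sides matches exactly because $|\nabla u|=|\nabla v|$ on $\Gamma_\mu$, so continuity propagates its equality), whence $L_{D,\vp}u\geq L_{D,\vp}v$ weakly on $\mathcal{U}$. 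The weak comparison principle on the superlevel set $\Omega_{y,\zeta}=\{u>v+\zeta\}\Subset\mathcal{U}$ for $\zeta$ close to $\mu$ then forces $u\leq v+\zeta$ on $\Omega_{y,\zeta}$, contradicting $y\in\Omega_{y,\zeta}$. The case $u^*=+\infty$ is handled as in Theorem~\ref{thm3.21}, being in fact simpler. If $u\geq 0$ and $\ell(0)>0$ then a constant $u\equiv c\geq 0$ forces $b(x)f(c)\ell(0)\leq 0$, giving $f(c)=0$ and thus $c=0$.

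The main obstacle is the third paragraph: keeping the gradient-dependent perturbation $g(u)h(|\nabla u|)$ compatible with the weak comparison principle of \cite{PigolaRigoliSetti-Memoirs}, Proposition~6.1. The identification $|\nabla u(y)|=\alpha'(r(y))$ on $\Gamma_\mu$ is essential to ensure that, by continuity, the perturbation terms on the two sides satisfy the one-sided inequality in an open neighborhood, and not merely on $\Gamma_\mu$ itself; this is what allows the weak comparison principle to be applied in its standard form despite the presence of the non-variational term.
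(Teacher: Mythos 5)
Your overall scheme is the right one (run the proof of Theorem~\ref{thm3.21} with the supersolution furnished by Proposition~\ref{prop6.5}, whose extra $-\rho(\alpha)\vp'(\alpha')(\alpha')^2$ term is designed to absorb $-g(u)h(|\nabla u|)$ through ($h$) and ($g$)), but the very step the paper singles out as the novelty of this proof is the one you wave away. In the first paragraph you claim that ``the sign of the perturbation $-g(u)h(|\nabla u|)$ is controlled by the coercive structure already required for Pucci--Serrin'', and on that basis assert $u^*_o<u^*$. This is not automatic: at a point of maximum the right-hand side of (\ref{ineq_minus}) is $bf(u)\ell(|\nabla u|)-g(u)h(|\nabla u|)$ and, when $\ell(0)=0$, both summands vanish at $\nabla u=0$, so the structural hypotheses of the strong maximum principle do not obviously hold. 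The paper closes this gap quantitatively: it shows that the set $\Gamma=\{u=u^*\}$ is open by combining ($\theta$)$_1$ (with $\theta\le 1$) and ($h$) to get $h(s)\leq Cs^{2-\theta}\ell(s)$ for $s\le 1$, whence near $x_o\in\Gamma$, where $|\nabla u|$ is small,
\begin{equation*}
b(x) f(u) \ell (|\nabla u|) - g(u) h(|\nabla u|)\geq
\ell(|\nabla u|)\Big(\tfrac{b(x_o)}{2} f(u^*) - C\rho(u^*)\,|\nabla u|^{2-\theta}\Big)\geq 0,
\end{equation*}
so $L_{D,\vp}u\ge 0$ there and the strong maximum principle applies. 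This use of ($\theta$)$_1$ is essential and cannot be bypassed by invoking a generic ``coercive structure''; the paper even remarks that only when $\ell(0)>0$ can ($\theta$)$_1$ be dispensed with at this step. Your proposal does not contain this argument.

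A second, smaller inaccuracy is in your comparison step. You justify the inequality between the two right-hand sides in a neighborhood of $\Gamma_\mu$ by saying that ``the extra term on both sides matches exactly because $|\nabla u|=|\nabla v|$ on $\Gamma_\mu$''. Only the factor $h(|\nabla u|)=h(|\nabla v|)=h(\alpha')$ matches on $\Gamma_\mu$; the other factor does not, since $u(y)>v(y)$ there and $g$ is not assumed monotone, so $g(u(y))\ne g(v(y))$ in general. The one-sided inequality you need is not a consequence of an exact cancellation of the $g\,h$ terms, and the claim as written is false. (The paper itself compresses this part into ``proceeds as in Theorem~\ref{thm3.21} using Proposition~\ref{prop6.5}'', but it does not rely on the incorrect cancellation you propose.)

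Subject to these two points your write-up is otherwise faithful to the paper's route: the reduction via Remark~\ref{proof ThmA} of (\ref{eq6.6}) to (\ref{thetabetamu'}), the use of Proposition~\ref{weighted_laplacian_comparison} to get $L_Dr\le Ar^{\beta/2}$, the shifted-distance trick to identify $|\nabla u(y)|=\alpha'(r(y))$ on the cut locus, and the final appeal to the weak comparison principle all match the intended argument.
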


\begin{proof}
The proof is modeled on that of Theorem ~\ref{thm3.21}. However, in
the case where $u$ is bounded above, in order to prove that, if $u$
takes on positive values and is non-constant then
\begin{equation*}
u^*_o=\sup_{B_{r_o}} u < \sup u = u^*,
\end{equation*}
we argue as follows. Assume that $u$ attains its supremum  $u^*>0$
and let $\Gamma = \{x : u(x) =u^*\}$. Clearly $\Gamma$ is closed
and nonempty. We are going to show that it is also open so, by
connectedness, $\Gamma = M$ and $u$ is constant. To this end, let
$x_o\in \Gamma$. We have $b(x)f(u)\geq \frac 1 2 b(x_o)f(u^*)>0$
and $g(u)\leq 2C\rho(u^*)$ in a suitable neighborhood $U$ of
$x_o$. Moreover, by ($\theta$)$_1$ and $(h)$, we may estimate
\begin{equation*}
h(s)\leq C s^2 \vp'(s)\leq C\frac{\vp'(1)}{\ell(1)}  s^{2-\theta}\ell(s)
= C s^{2-\theta} \ell (s),
\quad \forall s\leq 1,
\end{equation*}
so that, in  $U$,
\begin{equation*}
b(x) f(u) \ell (|\nabla u|) - g(u) h(|\nabla u|)\geq
\ell(|\nabla u|)
\Big(\frac{b(x_o)}{2} f(u^*) - C\rho(u^*) |\nabla
u|^{2-\theta}
\Big).
\end{equation*}
Since $\nabla u(x_o)=0$ it is now clear that there exists a
neighborhood $U'\subset U$ of $x_o$ where the right hand side the
above inequality is non-negative. Thus,
\begin{equation*}
L_{D,\vp} u\geq 0 \quad \text{ in }\,\, U'
\end{equation*}
and  $u=u^*$ in $U'$ by the strong maximum principle.

We note in passing that if $\ell(0)>0$ the required conclusion may be
obtained without having to appeal to condition ($\theta$)$_1$.

The rest of the proof proceeds as in
Theorem~\ref{thm3.21} using Proposition~\ref{prop6.5} instead of
Proposition~\ref{prop3.7}.
 \end{proof}
As we did for Theorem~\ref{thm3.21} in
Section~\ref{section_proof_ThmA},  even in this case we can provide
a version of the above result valid for a class of operators which
include the mean curvature operator. In order to do this we need to
introduce the appropriate Keller-Osserman condition. Given
$\omega\in \R$, let $\rho$ satisfy ($\rho$) and let $\hat F$ be
defined in (\ref{F_rho_omega}). We assume
($\vp\ell$)$_3$ holds and let $\hat K$ be defined in (\ref{eq4.0}).
The version of Keller-Osserman condition we consider is then
\begin{equation}
\label{rho_hatKO}\tag{$\rho\hat KO$} \frac{e^{\int_0^t \rho}}{\hat
K^{-1}\bigl(\hat F (t)\bigr)}\in L^1(+\infty).
\end{equation}

Modifications of the arguments of
Section~\ref{section_further_version} allow to obtain the following

\begin{theorem}
\label{thm6.20} Let  $(M\langle \,,\rangle)$ be a complete manifold
satisfying (\ref{eq6.16'})
for some $n>m$, $H>0$ and $\beta\geq -2$, and assume that (h),
(g), ($\rho$), ($\Phi_0$), ($F_1$), ($L_1$) ($L_2$),
($\vp\ell$)$_1$ and ($\theta$) hold. Let also $b(x)\in C^0(M)$
be strictly positive on $M$ and satisfying (\ref{eq6.17})
with  $\tilde b$ satisfying (b), and (\ref{eq6.6}). Finally, suppose
that ($\rho$\^KO) holds with $\omega=\theta$ in the definition of
$\hat F$. Then any entire classical weak solution of the
differential inequality
\begin{equation}
\tag{\ref{ineq_minus}} L_{D, \vp} u \geq b(x) f(u) \ell(\modnabla u)
- g(u) h(\modnabla u),
\end{equation}
is either non-positive or constant. Moreover, if $u\geq 0$ and
$\ell(0)>0$, then $u\equiv 0$.
\end{theorem}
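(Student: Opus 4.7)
The plan is to mimic the proof of Theorem~\ref{thm6.16} and replace its analytic core---Proposition~\ref{prop6.5}---by a ``$\widehat K$-version'' in which the diffeomorphism $K$ is replaced by the map $\widehat K$ from Section~\ref{section_further_version}, just as Proposition~\ref{prop4.2} replaces Proposition~\ref{prop3.7} in deriving Theorem~\ref{thm4.1} from Theorem~\ref{thm3.21}. The three ingredients to be fused are: the barrier/comparison scheme of Theorem~\ref{thm3.21}, the ($\Phi_2$)-based manipulations of Proposition~\ref{prop4.2}, and the $\rho$-modified implicit construction of Proposition~\ref{prop6.5}.

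First I would construct a radial $C^2$ supersolution on an annulus. Given $T\le t_0<t_1$ and $0<\epsilon<\eta$, for $\sigma\in(0,1]$ small I would implicitly define $\alpha\colon[t_0,T_\sigma)\to[\epsilon,+\infty)$ by
$$
\int_t^{T_\sigma}\tilde b(s)^\lambda\,ds=\int_{\alpha(t)}^{+\infty}\frac{e^{\int_0^s\rho(z)\,dz}}{\widehat K^{-1}(\sigma\hat F(s))}\,ds,
$$
where (\ref{rho_hatKO}) together with the $\widehat K$-analogue of Lemma~\ref{lemma_KOsigma_bis} guarantees convergence of the right--hand integral, while (b) lets us pick $T_\sigma>t_1$. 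Differentiating twice and using the definition of $\widehat K$ produces an identity analogous to (\ref{eq6.10}) but with $\vp/\ell$ in place of $\vp'/\ell$. At this point I would invoke ($\Phi_2$) in the form $\vp(t)\ge Ct\vp'(t)$ to turn the identity into an inequality for $\vp'(\alpha')\alpha''$, and separately integrate the identity, using the $C$-monotonicity encoded in ($\theta$)$_2$ and ($F_1$),($L_2$), to obtain the complementary control on $\vp(\alpha')/\ell(\alpha')$. Combining the two and applying the growth bound (\ref{eq6.6}), exactly as in Propositions~\ref{prop4.2} and~\ref{prop6.5}, yields
$$
\vp'(\alpha')\alpha''+At^{\beta/2}\vp(\alpha')\le N_\sigma(t)\,\tilde b(t)f(\alpha)\ell(\alpha')-\rho(\alpha)\vp'(\alpha')(\alpha')^2,
$$
with $N_\sigma(t)\to0$ uniformly on $[t_0,\bar T)$ as $\sigma\to0$, and with $\epsilon\le\alpha\le\eta$ on $[t_0,t_1]$ provided $\sigma$ is small enough.

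With the barrier in hand, I would run the comparison argument of Theorem~\ref{thm3.21}: the radial function $v(x)=\alpha(r(x))$ is a weak supersolution of (\ref{ineq_minus}) on $B_{\bar R}\setminus\overline B_{r_o}$ by Proposition~\ref{weighted_laplacian_comparison} applied under (\ref{eq6.16'}); crucially, the extra term $-\rho(\alpha)\vp'(\alpha')(\alpha')^2$ is precisely what is needed to absorb the $g(u)h(|\nabla u|)$ correction via (g) and (h). Handling the cut locus exactly as in (\ref{eq3.2ter}) and applying the weak comparison principle leads to a contradiction with the existence of a non-constant solution taking positive values somewhere. The strong maximum principle step at a point where $u$ attains the value $u^*>0$ is done as in the proof of Theorem~\ref{thm6.16}: ($\theta$)$_1$ combined with (h) yields $h(s)\le Cs^{2-\theta}\ell(s)$ for small $s$, so that the right--hand side of (\ref{ineq_minus}) is non-negative in a neighborhood of such a point and $u$ must be locally constant.

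The main obstacle will be the ``$\widehat K$ plus $\rho$'' bookkeeping in the ODE step: one must expand the derivative $(\alpha'e^{\int_0^\alpha\rho}/\tilde b^\lambda)'$, discard the favorable sign coming from $-\tilde b'\ge0$, verify that the prefactor $e^{\int_0^\alpha\rho}/\tilde b^\lambda\ge1$ permits a clean application of ($\theta$)$_1$ (which is what forces the choice $\omega=\theta$ in the definition of $\hat F$), and check that after invoking ($\Phi_2$) the $-\rho(\alpha)\vp'(\alpha')(\alpha')^2$ contribution survives with the correct sign to cancel the $g\cdot h$ term in the comparison step. Once this combinatorial verification is carried out, no new ideas are required and the remaining parts of the proof are direct transcriptions of the arguments used in the passage from Theorem~\ref{thm3.21} to Theorems~\ref{thm4.1} and~\ref{thm6.16}.
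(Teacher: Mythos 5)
Your proposal is correct and mirrors the route the paper sketches: replace $K$ by $\widehat K$ in Proposition~\ref{prop6.5} exactly as Proposition~\ref{prop4.2} modifies Proposition~\ref{prop3.7}, derive the ODE inequality (\ref{eq6.7}) via $(\Phi_2)$ and the $C$-monotonicity hypotheses, and then run the comparison machinery of Theorems~\ref{thm3.21} and~\ref{thm6.16}, including the strong-maximum-principle step at an interior maximum. You correctly invoke $(\Phi_2)$, which is in fact indispensable for the $\widehat K$ manipulations even though the statement of Theorem~\ref{thm6.20} does not list it (an apparent omission inherited from Theorem~\ref{thm6.16}; compare Theorem~\ref{thm4.1}, which does include it), and the only imprecision is in your final ``bookkeeping'' paragraph: the direct analogue of Proposition~\ref{prop4.2} handles the prefactor by applying $(\theta)_2$ to the $\vp/\ell$ ratio and then $(\Phi_2)$, whereas you suggest $(\theta)_1$ — this works too provided $(\Phi_2)$ is applied first to convert $\vp$ into $t\vp'$, and since both halves of $(\theta)$ are assumed the two orderings give the same conclusion.
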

We leave the details to the interested reader, and  merely point
out that, according to what remarked in the proof of
Theorem~\ref{thm6.16}, if $\ell(0)>0$ then  it suffices to assume
($\theta$)$_2$ in the statement of  Theorem~\ref{thm6.20}.

\end{document}